\tikzset{%
  show curve controls/.style={
    postaction={
      decoration={
        show path construction,
        curveto code={
          \draw [blue,-]
            (\tikzinputsegmentfirst) -- (\tikzinputsegmentsupporta)
            (\tikzinputsegmentlast) -- (\tikzinputsegmentsupportb);
          \fill [red, opacity=0.5]
            (\tikzinputsegmentsupporta) circle [radius=.2ex]
            (\tikzinputsegmentsupportb) circle [radius=.2ex];
        }
      },
      decorate
    }
  },
  scc/.style={
  },
}
\newcommand{\id}{\operatorname{id}}
 \newcommand{\Rec}{\operatorname{Rec}}
\newcommand{\N}{{\mathbb{N}}}
\newcommand{\rec}{\operatorname{rec}}
\newcommand{\p}{\operatorname{per}}
\DeclareMathOperator{\per}{Per}
   \theoremstyle{plain}
   \newtheorem{thm}{Theorem}[section]
   \newtheorem{prop}[thm]{Proposition}
   \newtheorem{lemma}[thm]{Lemma}
   \newtheorem{cor}[thm]{Corollary}
   \theoremstyle{definition}
   \newtheorem{defn}[thm]{Definition}
   \newtheorem{example}[thm]{Example}
   \theoremstyle{remark}
   \newtheorem{remark}[thm]{Remark}
\newtheorem{?}[thm]{Question}
\definecolor{mybgcolor}{gray}{0.8}
\definecolor{myframecolor}{rgb}{.647,.129,.149}
\newmdenv[style=mystyle]{important}
   \numberwithin{equation}{section}
\title[Factorizable embeddings]{Factorizable embeddings and the period of an irreducible sofic shift}
\author[B. Marcus]{Brian Marcus}
\address{The University of British Columbia, Vancouver}
\email{marcus@math.ubc.ca}
\author[T. Meyerovitch]{Tom Meyerovitch}
 \address{Ben Gurion University of the Negev. Departement of Mathematics. Be’er Sheva, 8410501,
Israel}
\email{mtom@bgu.ac.il}
\author[K. Thomsen]{Klaus Thomsen}
 \address{Institut for matematiske fag, Ny Munkegade, 8000 Aarhus C, Denmark}
\email{matkt@math.au.dk}
\author[C. Wu]{Chengyu Wu}
\address{The University of British Columbia, Vancouver}
\email{ chengyuw@connect.hku.hk}
\begin{document}


\begin{abstract} Generalizing a result of MacDonald we give necessary and sufficient conditions for an arbitrary subshift to embed into an irreducible sofic shift factoring through a given cover by an irreducible subshift of finite type (SFT). We obtain also necessary and sufficient conditions for an arbitrary subshift to embed into an irreducible sofic shift factoring through \emph{some} sliding block code out of an irreducible SFT. We do that when the code is required to be surjective, and hence a factor code, and when it is required to be injective {or almost invertible}, or is allowed to be arbitrary. These results require  concepts of the period of an irreducible sofic shift as well as a concept of a $p$-periodic subshift. Several equivalent formulations of the period are developed.
\end{abstract}

\maketitle



\section{Introduction}

A celebrated theorem of Wolfgang Krieger, \cite{Kr1}, gives necessary and sufficient conditions for a subshift to embed into a given mixing SFT of larger entropy. To formulate it, let $Q_n(X)$ denote the set of periodic points of least period $n$ in a subshift $X$ and let $q_n(X):= \# Q_n(X)$ be the number of elements in $Q_n(X)$. Then Krieger's theorem reads as follows:

\begin{thm}\label{Krieger} (W. Krieger) Let $Z$ be a subshift and $Y$ a mixing SFT such that $h(Z) < h(Y)$. Then $Z$ embeds into $Y$ if and only if $q_n(Z) \leq q_n(Y)$ for all $n \in \mathbb N$.
\end{thm}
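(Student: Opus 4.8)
The plan is to prove the two implications separately. Necessity is an orbit count: suppose $\phi\colon Z\to Y$ is an embedding, i.e.\ an injective sliding block code. If $x\in Z$ has least period $n$ then $\sigma^n\phi(x)=\phi(\sigma^n x)=\phi(x)$, so $\phi(x)$ has least period $m$ for some $m\mid n$; were $m<n$ we would get $\phi(\sigma^m x)=\sigma^m\phi(x)=\phi(x)$ and hence $\sigma^m x=x$ by injectivity, forcing $n\mid m$ and contradicting $m<n$. Thus $\phi$ restricts to an injection $Q_n(Z)\hookrightarrow Q_n(Y)$ for every $n$, giving $q_n(Z)\le q_n(Y)$. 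The content of the theorem is the converse, which is Krieger's marker argument \cite{Kr1}, and the rest of this sketch is devoted to it.

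We may assume $Z$ is infinite. Since entropy is non-negative, $h(Y)>h(Z)\ge 0$ means $Y$ is a mixing SFT of positive entropy, so $q_n(Y)$ grows like $e^{nh(Y)}$, whereas $q_n(Z)$ is bounded by the number of words of length $n$ in $Z$, which grows like $e^{n(h(Z)+o(1))}$; hence $q_n(Z)\le q_n(Y)$ is automatic once $n$ exceeds a threshold $n_0=n_0(Z,Y)$ and is a genuine hypothesis only for $n<n_0$. Fix a large integer $N>n_0$, its precise size to be dictated by the estimates below, and let $M$ be a mixing length of $Y$, so that any two admissible $Y$-words can be joined across a gap of any prescribed length $\ge M$ by an admissible $Y$-word. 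Using the finitely many inequalities $q_p(Z)\le q_p(Y)$ for $p<N$, choose a shift-commuting injection $\psi_0$ of the finite set $P:=\bigcup_{p<N}Q_p(Z)$ of low-period points of $Z$ into the periodic points of $Y$, matching each orbit of exact period $p$ to a distinct orbit of exact period $p$; using mixing we may in addition arrange the images to be mutually far apart and to avoid a reserved finite family $\mathcal F$ of ``marker'' words of $Y$.

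Next apply the marker lemma to $Z$ with window $N$: there is a clopen set $F\subseteq Z$ with $F,\sigma^{-1}F,\dots,\sigma^{-(N-1)}F$ pairwise disjoint such that the orbit of $x$ meets $F$ unless $x$ is periodic of period $<N$ (that is, $x\in P$), and when it does meet $F$ the return gaps lie in $[N,2N)$, \emph{except} on stretches where $x$ is locally periodic with period $<N$, where $F$ is arranged to be absent. For $x\notin P$ the markers then cut $x$ into a bi-infinite string of words, of length in $[N,2N)$ apart from occasional longer but locally periodic ones; encode each such word $B$ of length $\ell$ as a $Y$-word of the same length of the form ``a word from $\mathcal F$, a mixing bridge, an injective image of $B$, a mixing bridge'', which is possible because $\ell\ge N$ is large and the entropy gap leaves at least $e^{\ell(h(Y)-o(1))}$ admissible $Y$-words of length $\ell-O(M)$ to inject the at most $e^{\ell(h(Z)+o(1))}$ words $B$ into, with the encoding moreover chosen so that on long locally-$p$-periodic stretches ($p<N$) it reproduces the periodic word defining $\psi_0$ on the relevant orbit. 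Concatenating these $Y$-words, with the mixing bridges selected so that all seams are admissible, yields $\phi(x)\in Y$; on $P$ we set $\phi=\psi_0$. Since $F$ is clopen and the encoding of a word depends only on it and its two neighbours, $\phi$ is a sliding block code, and the absence of markers on locally low-periodic stretches, together with the compatibility just imposed on the encoding, is exactly what makes $\phi$ continuous at the points of $P$.

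It remains to see that $\phi$ is injective, which is the crux. Given $\phi(x)$, one first locates its occurrences of the marker words $\mathcal F$ — recognisable because the mixing bridges isolate them and because $\psi_0(P)$ was chosen to avoid $\mathcal F$ — thereby recovering the marker positions, decodes each block using injectivity of the block encoding, and reconstructs $x$; a point of $Y$ containing no marker word lies in $\psi_0(P)$ and is recovered by inverting $\psi_0$. Once $\phi$ is injective, exact periods are preserved automatically, so the (possibly infinitely many) periodic orbits of $Z$ of period $\ge N$, which are handled by the markering rather than by $\psi_0$, are sent to genuinely distinct periodic orbits of $Y$ of the same period, and no separate accounting for them is required. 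The main obstacle is to make all of these requirements hold at once: the marker lemma must be delicate enough to vanish on locally low-periodic regions, for continuity at $P$, while still cutting everything else into controlled blocks; the marker words and mixing bridges must be chosen so that marker positions can be read off unambiguously inside $Y$, for injectivity; and the entropy gap must be used quantitatively to leave room for an injective block encoding. Orchestrating this interlocking bookkeeping is the substance of Krieger's proof \cite{Kr1}.
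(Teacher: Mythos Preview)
The paper does not prove this theorem: it is stated in the introduction as Krieger's result and cited to \cite{Kr1}, then used as a black box (notably inside the proof of Proposition~\ref{prop:irreducible_emb}). So there is no ``paper's own proof'' to compare your proposal to.

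Your sketch is a faithful high-level outline of Krieger's original marker argument and correctly isolates the three ingredients---the marker lemma producing a clopen set with controlled return times, the entropy gap providing room for an injective block encoding, and the compatibility with the low-period map $\psi_0$ needed for continuity at $P$---while honestly flagging that the interlocking bookkeeping is where the real work lies. Two small cautions on the details as stated: the standard marker lemma does not quite give return gaps confined to $[N,2N)$ with $F$ simply ``absent'' on locally periodic stretches; rather, the gaps are bounded (typically by $3N$ or so) away from such stretches, and along a long locally $p$-periodic segment one places markers periodically with spacing a multiple of $p$, which is what makes the encoding there agree with $\psi_0$. Also, arranging that the marker words $\mathcal F$ are unambiguously detectable inside $Y$ (not created accidentally at seams or inside encoded blocks) requires a further argument, usually by choosing $\mathcal F$ to consist of words that self-overlap only trivially and reserving a buffer zone. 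These are exactly the points you defer to \cite{Kr1}, so as a proof \emph{sketch} your proposal is sound.
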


Since a proper subshift of an irreducible SFT must have strictly smaller entropy, Krieger's theorem gives in fact both necessary and sufficient conditions for a subshift to embed into a mixing SFT. It is striking that the rather obvious necessary condition on the periodic points is also sufficient when $h(Z) < h(Y)$.

Despite much effort and several partial results, no one has been able to give necessary and sufficient conditions for a subshift to embed into an arbitrary given sofic shift, be it irreducible or mixing. The lesson seems to be that although sofic shifts may appear to be only slight generalizations of SFTs, they contain a much more rigid internal structure; a structure which is still not fully understood. Nonetheless, a stronger version of embedding can be considered:
\begin{defn}\label{factorthrough}
	Let $X,Y$ and $Z$ be subshifts and $\pi: X\to Y$ a sliding block code. Let $\phi: Z\to Y$ be an embedding. We say that $\phi$ {\em factors through $\pi$} if there is a sliding block code $\psi : Z \to X$ making the following diagram commute:
\end{defn}
\begin{equation}\label{20-03-23cx}
	\xymatrix{
		& X \ar[d]^-{\pi}  \\
		Z \ar[ur]^-\psi \ar[r]_-\phi & Y}
\end{equation}
 Note that $\psi$ is automatically an embedding. 
 When $\pi$ is surjective, $X$ is an irreducible SFT and $Y$ is, consequently, an irreducible sofic shift, we refer to $\pi: X \to Y$ as a \emph{cover} of $Y$.

In her thesis Sophie MacDonald considered the following problem: Given a cover $\pi : X \to Y$ of $Y$, where $X$, and hence also $Y$, is mixing, when can a subshift $Z$ with $h(Z) < h(Y)$ be embedded into $Y$ in such a way that the embedding factors through $\pi$?
Given the difficulty others have encountered when trying to embed into an arbitrary sofic shift, and despite that the problem she considered seems to be more complicated because it is asking for a specific type of embedding, MacDonald was able to give the following simple answer: Let $r_n(\pi) := \# Q_n(Y) \cap \pi(Q_n(X))$. Then the desired embedding exists if and only if $q_n(Z) \leq r_n(\pi)$ for all $n \in \mathbb N$.

\begin{thm}\label{Sophie} (MacDonald's Theorem \cite{M}) Let $\pi : X \to Y$ be a factor code from a mixing SFT $X$ to a sofic shift $Y$. Let $Z$ be a subshift such that $h(Z) < h(Y)$. Then, there is an embedding from $Z$ into $Y$ which factors through $\pi$ if and only if $q_{n}(Z) \leq r_{n}(\pi)$ for all $n \in \mathbb N$.
\end{thm}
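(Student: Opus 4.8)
The plan is to treat the two implications separately; the content is the sufficiency direction, which I would obtain by relativizing over $\pi$ the marker-and-block-replacement method behind \refthm{Krieger}. \emph{Necessity.} This is immediate from the fact that an injective sliding block code preserves the least period of a periodic point: if $f$ is such a code, $z$ has least period $n$, and $\sigma^{m}f(z)=f(z)$ for some $m\mid n$ with $0<m<n$, then $f(\sigma^{m}z)=\sigma^{m}f(z)=f(z)$ forces $\sigma^{m}z=z$, a contradiction. Hence in \eqref{20-03-23cx} the embedding $\psi$ maps $Q_{n}(Z)$ into $Q_{n}(X)$, so $\phi(Q_{n}(Z))=\pi(\psi(Q_{n}(Z)))\subseteq\pi(Q_{n}(X))$; since also $\phi(Q_{n}(Z))\subseteq Q_{n}(Y)$ and $\phi$ is injective, $q_{n}(Z)\le\#\bigl(Q_{n}(Y)\cap\pi(Q_{n}(X))\bigr)=r_{n}(\pi)$.

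\emph{Sufficiency.} It suffices to construct a sliding block code $\psi\colon Z\to X$ with $\pi\circ\psi$ injective, for then $\psi$ is automatically injective and $\phi:=\pi\circ\psi$ makes \eqref{20-03-23cx} commute. I would assemble $\psi$ in four moves. First, \emph{periodic orbits}: for each $n$ with $q_{n}(Z)>0$, the hypothesis $q_{n}(Z)\le r_{n}(\pi)$ lets us choose a $\sigma$-equivariant injection of $Q_{n}(Z)$ into $Q_{n}(Y)\cap\pi(Q_{n}(X))$, and then, picking for each orbit in the range a periodic preimage orbit in $X$ (these exist by the very definition of $r_{n}(\pi)$), lift it to a $\sigma$-equivariant $g_{n}\colon Q_{n}(Z)\to Q_{n}(X)$ with $\pi\circ g_{n}$ injective; only the orbits of period below a threshold $P$, to be fixed below, will be handled by the $g_{n}$. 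Second, \emph{punctuation}: since $h(Z)<h(Y)$ we may fix a word $d$ of $Y$, long enough that the subshift of $Y$ in which $d$ does not occur still has entropy exceeding $h(Z)$, together with an $X$-word $C$ with $\pi(C)=d$ (which exists because $\pi$ is a factor code); $C$ will serve as a separator. Third, \emph{a block dictionary}: for every sufficiently large length $\ell$ there are strictly more $d$-avoiding words of length $\ell$ in $Y$ than words of length $\ell$ in $Z$, so we fix an injection from the length-$\ell$ words of $Z$ into the $d$-avoiding words of $Y$ of complementary length $\ell-|C|$ and—again using that $\pi$ is a factor code—lift each chosen $Y$-word to an $X$-word, adjusting a bounded number of boundary symbols through the mixing of $X$ so that it concatenates legally with copies of $C$. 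Fourth, \emph{assembly}: following the proof of \refthm{Krieger}, apply the marker lemma in $Z$ to obtain, off the orbits of period $\le P$, a marker set cutting each point into blocks of controlled length; on such a point output, block by block, a copy of $C$ followed by the $X$-word that the dictionary assigns to that block, while on the orbits of period $\le P$ output the values $g_{n}$. The resulting $\psi$ is a sliding block code (its output near a coordinate depends on a bounded window of $z$, enough to locate the neighbouring markers), its image lies in $X$, and $\pi\circ\psi$ is injective: from $\pi(\psi(z))=\cdots d\,\gamma_{1}\,d\,\gamma_{2}\cdots$ one reads off the separator positions as the occurrences of $d$, hence the images $\gamma_{i}$ and their lengths, hence the original blocks by injectivity of the dictionary, while on the period-$\le P$ part injectivity is that of the $\pi\circ g_{n}$.

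The step I expect to be the real obstacle is the relativized Krieger bookkeeping in the assembly: arranging the marker construction so that it blends correctly with the prescribed values $g_{n}$ on the short periodic orbits (including when such orbits are not isolated in $Z$), so that long periodic points are cut by markers that detect their full least period (so that $\psi$ collapses no periodic point), and so that $\pi\circ\psi$ is injective \emph{globally} and not merely on the coded part. The feature that makes this harder than \refthm{Krieger}—and that forces the hypothesis to be stated in terms of $r_{n}(\pi)$ rather than $q_{n}(Y)$—is that one cannot encode freely into $X$: the encoding must be performed among words of $Y$ and then lifted through $\pi$, which is precisely why the argument needs the strict entropy inequality $h(Z)<h(Y)$ together with the fact that $\pi$ is a factor code out of an irreducible SFT.
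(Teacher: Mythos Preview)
Your necessity argument is fine and coincides with the paper's. For sufficiency, however, note first that the paper does not give its own proof of this statement: \refthm{Sophie} is quoted from \cite{M}. What the paper proves is the generalization \refthm{Sophie2}, and it does so by a route quite different from yours. Your proposal relativizes Krieger's marker-and-filler construction over $\pi$, building $\psi$ directly; this is, by the paper's own account, the shape of MacDonald's original argument. The paper instead avoids reopening Krieger's proof: via Lemmas~\ref{lem:large_entropy)injectively_fact_SFT} and~\ref{09-06-24} and Theorem~\ref{08-05-24g} it constructs an irreducible sub-SFT $W\subseteq X$ on which $\pi$ is injective, with $h(W)>h(Z)$ and $q_{n}(W)\ge q_{n}(Z)$ for all $n$, and then applies \refthm{Krieger} (in the form of Proposition~\ref{prop:irreducible_emb}) as a black box to embed $Z$ into $W$; the desired $\phi$ is $\pi$ restricted to the image. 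What your approach buys is a direct, hands-on construction; what the paper's approach buys is modularity and a simpler handling of the delicate blending with short periodic orbits that you yourself flag as the real obstacle. One technical point in your sketch that would need care: you assert that the occurrences of $d$ in $\pi(\psi(z))$ recover the separator positions, but as written nothing prevents $d$ from occurring across a boundary between a dictionary word and an adjacent copy of $d$; the usual fix is to choose $d$ with a self-synchronizing or non-overlapping property, which you should make explicit.
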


 As with Krieger's theorem, an obvious necessary condition concerning periodic points turns out to be sufficient for the embedding in Theorem \ref{Sophie}.  The condition is closely related to a condition involving the concept of a receptive periodic point (see Definition~\ref{receptive_def}), which Boyle~\cite{B} used as a sufficient condition for an ordinary embedding of a subshift into a mixing sofic shift.  In effect, MacDonald's result gives a related necessary and sufficient condition for a stronger notion of embedding, i.e., an embedding which  factors  through a given factor code on a mixing SFT $X$.

The first objective of the present paper is to extend MacDonald's Theorem to the case where $X$ is merely an irreducible SFT and to give both necessary and sufficient conditions also in that case. This is achieved in Theorem \ref{Sophie2}, along with a second version, Theorem \ref{Sophie3} where there is no assumption on the entropies.  The proofs of these results are similar in spirit, but  somewhat simpler, than MacDonald's proof. One virtue of our approach, besides its greater generality, is that it allows us to use the statement of Krieger's theorem rather than modifying its proof. Along the way, in Proposition~\ref{prop:irreducible_emb}, we generalize also Krieger's original theorem to the case where the target shift space is an irreducible, but not necessarily mixing, SFT. For this the notion of $p$-period for a general subshift is introduced as a necessary part of the assumptions. See Definition \ref{p-periodic}.

MacDonald's Theorem was, in part, motivated by the following setup.
In information theory, a communication channel is modeled using a set of transition probabilities, with the input and output described by stationary probability measures. The channel's noise arises from the randomness of its transition probabilities. When there is no noise, the channel is referred to as a distortion channel, which, from the perspective of symbolic dynamics, can, in an abstract way be interpreted as a factor code from a subshift of channel inputs to a subshift of channel outputs.
We can then view the problem of encoding messages through the channel as the problem of embedding a subshift of messages into the domain of the factor code such that the restriction of the factor code to the range of the embedding is one-to-one.

 The second aim of the paper is to find necessary and sufficient conditions for an arbitrary subshift $Z$ to embed into an irreducible sofic shift $Y$ in such a way that the embedding factors through {\em some} sliding block code $\pi: X \to Y$ out of an irreducible SFT $X$. We obtain such a characterization in Theorem \ref{18-06-24} and Theorem \ref{05-06-25e}, when there are no conditions on $\pi$, when it is required to be injective and finally also when it is required to be surjective.

 A crucial ingredient in Theorem \ref{18-06-24} and Theorem \ref{05-06-25e} is a new notion of period for an irreducible sofic shift. In Section~\ref{period}, we consider several additional notions of a period for an irreducible sofic shift and show that these are all equivalent. We also consider the concept of a $p$-periodic subshift, 
 as mentioned above, which applies to general subshifts.  We show that $p$-periodicity is rather different from the notion of period in the case where the subshift is irreducible sofic. 

{In Section \ref{sec-AI-factorizable} we show that if there is an embedding of an arbitrary subshift $Z$ into an irreducible sofic shift $Y$ that factors through a cover of $Y$, then that cover can be arranged to be almost invertible.}

 We recently became aware of a related paper by Wolfgang Krieger \cite{Kr2}.

\section{Notation, terminology, and background}\label{notations}

















	We introduce in this brief section some notation, terminology and basic results from symbolic dynamics, and we refer the reader to \cite{LM} for further information. We do assume that the reader is familiar with the fundamental concepts of shift space (or subshift), the language ${\mathcal B}(X)$ of a shift space $X$, irreducible and mixing shift spaces, and sliding block codes, factor codes, embeddings and conjugacies from one shift space to another.

A {\em shift of finite type (SFT)} is a shift space obtained by forbidding finitely many words from a full shift ${\mathcal A}^{\mathbb Z}$, where
$\mathcal A$ is a finite alphabet. For a finite directed graph $G$, we use $X_G$ to denote the edge shift of $G$, i.e., the SFT whose elements are all bi-infinite edge paths in $G$.
	
	A {\em labeled graph} $\mathcal G = (G, \mathcal{L})$ is a finite directed graph $G$, together with a label function $\mathcal{L}$, which maps edges of $G$ into some finite alphabet $\mathcal{A}$. The initial vertex of a path or edge $\gamma$ in $G$ will be denoted by $i_G(\gamma)$, and the terminal vertex of $\gamma$ by $t_G(\gamma)$.

	Let $\mathcal{L}_\infty$ be the $1$-block code which maps each bi-infinite (unlabeled) edge path in $G$ to its label sequence.
A {\em sofic shift} $Y$ is the set of all bi-infinite label sequences of a labeled graph $(G, \mathcal{L})$, i.e.,
$$
Y = X_{\mathcal G} = \{\mathcal{L}_\infty(\gamma): \gamma \mbox{ is a bi-infinite edge path in } G\}.
$$
Such a labeled graph $(G, \mathcal{L})$ is called a {\em presentation} of $Y$. 

{A sofic shift can alternatively be represented by a vertex labeled graph, for which we use the same notation $(G,\mathcal{L})$ where $\mathcal{L}$ is a label function on vertices of $G$.
}

	For each vertex $I$ of $G$, the follower set of $I$ is the collection of label sequences of finite edge paths starting at $I$. A presentation $(G, \mathcal L)$  is said to be {\em follower-separated} if distinct vertices have distinct follower sets. It is said to be  {\em right-resolving} (resp. {\em left-resolving}) if for all $a\in \mathcal{A}$, each vertex in $G$ has at most one outgoing ({\em resp.} incoming) edge labeled $a$.
And it is said to be irreducible if $G$ is irreducible. 	
	
	A {\em minimal right-resolving presentation} of a sofic shift $Y$ is a right-resolving presentation which has the fewest number of vertices among all right-resolving presentations of $Y$. When $Y$ is irreducible, its minimal right-resolving presentation is unique up to labeled graph isomorphism, and we call it the {\em right Fischer cover}. The left Fischer cover is defined similarly. In the remainder of the paper, the term Fischer cover is used to refer to the right Fischer cover unless stated otherwise.

 It is well known that the right Fischer cover $(G,\mathcal{L})$ of an irreducible sofic shift $Y=X_{\mathcal G}$ is characterized as the unique irreducible, right-resolving and follower-separated presentation of $Y$ (\cite[Corollary 3.3.19]{LM}). And the induced 1-block code $\mathcal{L}_\infty$ is {\em almost invertible,} i.e., every doubly transitive point has a unique preimage; here, a point $y\in Y$ is {\em doubly transitive} if every word in the language of $Y$ appears infinitely often to the left and to the right in $y$.

For an irreducible SFT $X$ the \emph{global period} $p$ is the greatest common divisor (gcd) of the periods realized by periodic points in $X$. There is then a partition $X = \sqcup_{i=0}^{p-1} X_i$ of $X$ into closed sets such that $\sigma(X_i) = X_{i+1}$ with addition mod $p$, such that $\sigma^p$ is mixing on the $X_i$'s. This partition is unique up to cyclic permutation and will be called \emph{the canonical cyclic partition} of $X$. We denote the global period of $X$ by $\text{per}(X)$.	

{Synchronizing elements of $\mathcal B(Y)$ will be of particular importance throughout the paper:
 A word $u \in \mathcal{B}(Y)$ is \emph{synchronizing} if for all words $a, b$ in $\mathcal{B}(Y)$,
$$
au, ub \in \mathcal{B}(Y) \ \Rightarrow \ aub \in \mathcal{B}(Y) .
$$
This can be viewed as an independence condition: the constraints on words imposed by the language of $Y$ on the left of $w$ are independent of those on the right.  A characterization of an SFT in terms of its language~\cite[Theorem 2.1.8]{LM} is precisely that of a shift space such that every sufficiently long allowed word is synchronizing. 
Note also that any word that contains a synchronizing word is synchronizing. }
	
	We end this section with two typical recoding results in symbolic dynamics.
	
	\begin{lemma}\label{08-05-24c} Let $Y$ be an irreducible sofic shift and $\pi : X \to Y$ a cover of $Y$. There is an irreducible graph $G$ and a labeling $\mathcal{L}$ of the edges of $G$ by the letters of the alphabet of $Y$ and a conjugacy $\psi : X \to X_G$ such that
		\begin{equation*}
			\begin{xymatrix}{
					X\ar[dr]_-{\pi}\ar[rr]^-\psi & &X_G\ar[dl]^-{\mathcal{L}_\infty} \\
					& Y & }
			\end{xymatrix}
		\end{equation*}
		commutes.
	\end{lemma}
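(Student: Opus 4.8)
The plan is to realize $\pi$ as a $1$-block code after recoding $X$, using only standard higher block constructions, and then to observe that a $1$-block code out of an edge shift is literally the same data as a labeling of its edges. The argument is routine; essentially the only thing that requires (standard) checking is that irreducibility survives each recoding step.

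First I would use that an irreducible SFT is conjugate to the edge shift of an irreducible graph: since $X$ is $M$-step for some $M$, the block presentation $X^{[N]}$ with $N\geq M+1$ is a $1$-step SFT, which is conjugate to a vertex shift and hence to an edge shift $X_{G_0}$; because $X$ is irreducible, $G_0$ is irreducible (after discarding any vertex lying on no bi-infinite path, of which there are none here). Fix a conjugacy $\alpha: X\to X_{G_0}$ and set $\pi_0:=\pi\circ\alpha^{-1}: X_{G_0}\to Y$, so that $\pi=\pi_0\circ\alpha$.

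Next, write $\pi_0$ as a sliding block code with memory $m$ and anticipation $a$ over the edge alphabet of $G_0$, put $k:=m+a+1$, and let $G:=G_0^{[k]}$ be the $k$th higher edge graph of $G_0$, whose vertices are the paths of length $k-1$ and whose edges are the paths of length $k$ in $G_0$; since $G_0$ is irreducible, so is $G$. The usual higher block conjugacy $\beta: X_{G_0}\to X_G$ — possibly composed with a power of $\sigma$ to centre the coding window — has the property that $\bar\pi:=\pi_0\circ\beta^{-1}: X_G\to Y$ is a $1$-block code. A $1$-block code $\bar\pi: X_G\to Y$ is, by definition, induced coordinatewise by a map $\mathcal L$ from the alphabet of $X_G$, namely the edge set of $G$, into the alphabet of $Y$; such a map is precisely a labeling of the edges of $G$ by letters of the alphabet of $Y$, and $\mathcal L_\infty=\bar\pi$ for this labeling. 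Setting $\psi:=\beta\circ\alpha$, a composition of conjugacies and hence a conjugacy $X\to X_G$, we obtain $\mathcal L_\infty\circ\psi=\bar\pi\circ\beta\circ\alpha=\pi_0\circ\alpha=\pi$, which is the asserted commuting triangle (and $(G,\mathcal L)$ is then automatically a presentation of $Y=\pi(X)$).

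As for the main obstacle: there is none of substance. The only content is the bookkeeping that the block presentation of an irreducible SFT has an irreducible presenting graph and that the higher edge graph of an irreducible graph is again irreducible — both standard (see e.g.\ \cite[Chapters 2--3]{LM}), each holding because a path between two vertices of the new graph corresponds to a path in the old one. The one mild point to be careful about is the memory/anticipation bookkeeping in choosing $k$ and, if one insists on a genuine ($0$-memory, $0$-anticipation) $1$-block code, absorbing the resulting shift into $\beta$, which is harmless since it remains a conjugacy.
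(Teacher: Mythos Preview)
Your proof is correct and is essentially the same approach as the paper's, which simply cites \cite[Theorem 3.2.1]{LM}: you have just unpacked the standard higher-block recoding argument that underlies that theorem, together with the (routine) observation that irreducibility is preserved.
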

	\begin{proof} This follows from (the proof of) \cite[Theorem 3.2.1]{LM}.
	\end{proof}

	\begin{lemma}\label{08-05-24d} Let $(G, \mathcal{L})$ be an irreducible labeled graph and $Y \subseteq X_G$ an irreducible sub-SFT of $X_G$. Then, there exist a finite irreducible labeled graph $(G',\mathcal{L}')$, an irreducible subgraph $H \subseteq G'$ and conjugacies $\psi : X_G \to X_{G'}$ and $\phi : Y \to X_H$ such that the following diagram commutes.
		\begin{equation}\label{08-05-24e}
			\begin{xymatrix}{
					& X_\mathcal G & \\
					X_G \ar[ur]^-{\mathcal{L}_\infty}\ar[rr]^-\psi & &X_{G'}\ar[ul]_-{\mathcal{L}'_\infty} \\
					Y \ar@{^{(}->}[u] \ar[rr]^-{\phi}  &  &X_H \ar@{^{(}->}[u]  }
			\end{xymatrix}
		\end{equation}
	\end{lemma}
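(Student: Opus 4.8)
The plan is to obtain this as a relative version of the familiar recoding that presents an SFT as an edge shift (\cite[\S2.3]{LM}): I would pass to a sufficiently high edge presentation of $X_G$ and check that, in that presentation, $Y$ becomes the edge shift of a subgraph, which after deleting inessential vertices and edges is irreducible.

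First I would choose an integer $N\ge 2$ so large that membership of a point $x\in X_G$ in $Y$ is detected by its length-$N$ subpaths, i.e.\ $x\in Y$ if and only if every length-$N$ subpath of $x$ lies in $\mathcal B(Y)$. This is possible because $Y$ is cut out of $X_G$ by forbidding finitely many words, so once $N$ exceeds the length of each forbidden word, any length-$N$ path of $G$ that lies in $\mathcal B(Y)$ automatically avoids all of them (and conversely). Next I would take $G':=G^{[N]}$, the $N$-th higher edge graph of $G$ (vertices the length-$(N-1)$ paths in $G$, edges the length-$N$ paths), which is irreducible because $G$ is, and let $\psi:X_G\to X_{G'}$ be the associated $N$-block conjugacy, $\psi(x)_i=x_ix_{i+1}\cdots x_{i+N-1}$. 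Defining the labeling by $\mathcal{L}'(e_1e_2\cdots e_N):=\mathcal{L}(e_1)$, one checks directly that $\mathcal{L}'_\infty(\psi(x))_i=\mathcal{L}(x_i)=\mathcal{L}_\infty(x)_i$ for all $i\in\Z$, so $\mathcal{L}'_\infty\circ\psi=\mathcal{L}_\infty$; this is the commutativity of the upper triangle of \eqref{08-05-24e}.

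Then I would let $H_0\subseteq G'$ be the subgraph keeping every vertex of $G'$ but only those edges (length-$N$ paths of $G$) that belong to $\mathcal B(Y)$. Since $H_0$ retains all vertices of $G'$, a point $z\in X_{G'}$ lies in $X_{H_0}$ exactly when each symbol $z_i$ is an edge of $H_0$; writing $z=\psi(x)$ this says exactly that every length-$N$ subpath of $x$ lies in $\mathcal B(Y)$, i.e.\ (by the choice of $N$) that $x\in Y$. Hence $\psi(Y)=X_{H_0}$. Replacing $H_0$ by its essential subgraph $H$ (delete every vertex lying on no bi-infinite path, together with its incident edges) does not change the edge shift, so $X_H=\psi(Y)$, and $H$ is a subgraph of $G'$. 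Setting $\phi:=\psi|_Y$, viewed as a map $Y\to X_H$, then gives a conjugacy with $\psi\circ\iota_Y=\iota_{X_H}\circ\phi$, where $\iota$ denotes inclusion; this is the commutativity of the lower square, completing the diagram.

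The one step that is not bookkeeping — and the main obstacle — is to verify that $H$ is \emph{irreducible}. This is not automatic from the construction of $H_0$, but it follows from the fact that $X_H=\psi(Y)$ is conjugate to the nonempty irreducible shift $Y$: since $H$ is essential, each of its edges lies on a bi-infinite path and hence occurs as a symbol of a point of $X_H$; given two edges $e,f$ of $H$, irreducibility of $X_H$ yields a word $ewf\in\mathcal B(X_H)$, i.e.\ a path in $H$ from $i_G(e)$ to $t_G(f)$, and applying this to an edge $e$ out of a prescribed vertex $u$ and an edge $f$ into a prescribed vertex $v$ produces a path from $u$ to $v$, so $H$ is irreducible. (Alternatively, one may simply invoke the standard fact that a nonempty irreducible edge shift is the edge shift of an irreducible graph.)
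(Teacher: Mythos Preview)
Your proof is correct and follows essentially the same approach as the paper: both pass to a higher block/edge presentation of $X_G$ so that $Y$ becomes the edge shift of a subgraph, with the paper using a centred window $[-m,m+1]$ and you using a left-anchored window $[0,N-1]$. The paper omits the verification that $H$ is irreducible, which you supply; note only the minor slip that in your last paragraph $i_G(e)$ and $t_G(f)$ should read $i_H(e)$ and $t_H(f)$.
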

	\begin{proof} Let $Y$ be $m$-step \cite[Definition  2.1.6]{LM}. The set of vertices of the graph $G'$ consists of the words $\left\{ x_{[-m,m]} : \ x \in X_G \right\}$ and the set of vertices of the subgraph $H$ consists of the words $\left\{ x_{[-m,m]} : \ x \in Y \right\}$. The set of edges of the graph $G'$ consists of the blocks $\left\{ x_{[-m,m+1]} : \ x \in X_G \right\}$, with
		$$
		i_{G'}(x_{[-m,m+1]}) = x_{[-m,m]}, \ t_{G'}(x_{[-m,m+1]}) = x_{[-(m-1),m+1]} .
		$$
		The edges of $H$ consist of the edges $x_{[-m,m+1]}$ from $G'$ for which $x \in Y$. The labeling $\mathcal L'$ is defined such that $\mathcal L'(x_{[-m,m+1]}) = \mathcal L(x_0)$ and $\psi : X_G \to X_{G'}$ is defined such that
		$$
		\psi\left((x_i)_{i \in \mathbb Z}\right) = \left( x_{[i-m,m-i+1]}\right)_{i \in \mathbb Z} .
		$$
		Finally, set $\phi:= \psi|_Y$.

	\end{proof}

\section{Encoding subshifts through an irreducible SFT - a generalization of Macdonald's theorem} \label{Sophie5}



 The main results, Theorems~\ref{Sophie2} and~\ref{Sophie3}, in this section give both necessary and sufficient conditions for the existence of an embedding $\phi : Z \to Y$ which factors through a given cover $\pi : X \to Y$ of the irreducible sofic shift $Y$.

  There are two main ideas in the proof of Theorem~\ref{Sophie2} which handles the case $h(Z) < h(Y)$, and both are similar to the main ideas in MacDonald's proof of Theorem \ref{Sophie}. The first idea is in Lemma~\ref{lem:large_entropy)injectively_fact_SFT}, which states that, given any $\epsilon > 0$, there is an irreducible SFT $W\subseteq X$  such that the restriction of $\pi$ to $W$ is injective and $h(W) > h(Y) - \epsilon$.
 The second idea is in Lemma~\ref{09-06-24}, which states that one can enlarge such a $W$ to include any periodic point $u$ in $X$ which has the same least period as that of $\pi(u)$ and $\pi(u) \not\in \pi(W)$ (and the new $W$ is still an irreducible SFT and the restriction of $\pi$ to the new $W$ is still injective).
 Then to complete the proof of Theorem~\ref{Sophie2}, one iteratively applies Lemma~\ref{09-06-24} to enlarge $W$ so that it has enough periodic points to accommodate an embedding $\psi: Z \to  W \subseteq X$, thereby obtaining an embedding $\phi : Z \to Y$ which factors through $\pi$. Finally, to handle the case $h(Z) = h(Y)$ we use a recent result from   \cite{Meye} {and \cite{PS}}, to obtain Theorem \ref{Sophie3}.

We remark that the first of the ideas above is related to an old result~\cite{MPW} which shows that there is an irreducible sofic shift $W'\subseteq X$ such that the restriction of $\pi$ to $W'$ is finite-to-one and surjective (and therefore satisfies $h(W') = h(Y)$).

The following somewhat technical lemma concerns  a quantitative estimate that involves forbidding long repetitions of a word in an irreducible sofic shift. 

\begin{lemma}\label{lem:eliminating_rep_pattern}
	Let $\mathcal{G}=(G,\mathcal{L})$ be a finite irreducible labeled graph such that $h(X_{\mathcal{G}})>0$, and let $y \in X_{\mathcal{G}}$ be a periodic point with least period $\ell$ and let $\overline{u} = y_0\ldots y_{\ell-1} \in \mathcal{B}_\ell(X_\mathcal{G})$. Let $\epsilon > 0$. Then, there exists $n\in \mathbb{N}$ such that for every pair of vertices $v_1,v_2$ in $G$ there exists $ i \in \{0,\ldots,\p(X_G)-1\}$ such that,
	\[
	\liminf_{j \to \infty}\frac{1}{j\p(X_G)+i}\log (\#\mathcal{L}(S_{j\p(X_G)+i}(v_1,v_2,n))) \ge h(Y)-\epsilon,
	\]
	where for $j \in \N$, $S_{j}(v_1,v_2,n)$ is the set of edge paths $w$ in $G$ of length $j$ from the vertex $v_1$ to the vertex $v_2$ such that $\overline{u}^{2n}$ does not occur as a subword of $\mathcal{L}(w)$.
\end{lemma}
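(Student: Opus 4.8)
The plan is to translate the statement into an entropy estimate for the sofic shift $Y:=X_{\mathcal{G}}$ and then realise the required labelled paths by padding. Note first that a length-$m$ word $b$ lies in $\mathcal{L}(S_m(v_1,v_2,n))$ precisely when $b\in\mathcal{B}_m(Y)$, the word $\overline{u}^{2n}$ does not occur in $b$, and $b$ is the label of some path $v_1\to v_2$ in $G$; so the task is to produce many such $b$. Set $p:=\p(X_G)$ and let $c\colon V(G)\to\mathbb{Z}/p\mathbb{Z}$ be the locating function of the canonical cyclic partition of the irreducible edge shift $X_G$, so that every path $a\to b$ in $G$ has length $\equiv c(b)-c(a)\pmod p$; take $i:=(c(v_2)-c(v_1))\bmod p$ for the claimed residue. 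By irreducibility of $G$ there is a constant $D=D(G)$ so that for any vertices $a,b$ and any $t\ge D$ with $t\equiv c(b)-c(a)\pmod p$ there is a path $a\to b$ of length exactly $t$.

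The core is the claim that \emph{forbidding long powers of $\overline{u}$ costs almost no entropy}: for every $\delta>0$ there is $N$ with $\widetilde h^{(N)}\ge h(Y)-\delta$, where $\widetilde B^{(N)}_L:=\{b\in\mathcal{B}_L(Y):\ \overline{u}^N\ \text{does not occur in}\ b\}$ and $\widetilde h^{(N)}:=\lim_L\frac{1}{L}\log\#\widetilde B^{(N)}_L$ (the limit exists by submultiplicativity of $L\mapsto\#\widetilde B^{(N)}_L$). As $\widetilde h^{(N)}$ is nondecreasing in $N$ and bounded by $h(Y)$, this amounts to $\sup_N\widetilde h^{(N)}=h(Y)$. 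To see it, note that $\overline{u}$ is primitive (since $y$ has least period $\ell$), so $\overline{u}^{\infty}$ has exactly $\ell$ subwords of each length $\ell^\ast\ge\ell$; since $h(Y)>0$ there are, for $\ell^\ast$ large, more than $\ell$ synchronizing words of $Y$ of length $\ell^\ast$, so we may fix a synchronizing word $v^\ast$ of length $\ell^\ast$ that is \emph{not} a subword of $\overline{u}^{\infty}$. Being synchronizing, $v^\ast$ glues: $v^\ast w_1 v^\ast,\,v^\ast w_2 v^\ast\in\mathcal{B}(Y)\Rightarrow v^\ast w_1 v^\ast w_2 v^\ast\in\mathcal{B}(Y)$. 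Hence, for $M$ in the appropriate class modulo the period of $Y$, the set $\mathcal{W}_M:=\{w\in\mathcal{B}_M(Y):\ v^\ast w v^\ast\in\mathcal{B}(Y)\}$ has $\frac{1}{M}\log\#\mathcal{W}_M\to h(Y)$, and all words $v^\ast w_1 v^\ast\cdots v^\ast w_k v^\ast$ with $w_j\in\mathcal{W}_M$ are distinct words of $Y$. The key observation is that $\overline{u}^N$ cannot contain $v^\ast$ (all subwords of $\overline{u}^N$ are subwords of $\overline{u}^{\infty}$), so once $M<\frac{1}{2}N\ell-\ell^\ast$ no occurrence of $\overline{u}^N$ fits inside such a concatenation—it would have to lie inside a single $w_j$, which is too short. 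Choosing $M=M_N$ maximal subject to $M_N<\frac{1}{2}N\ell-\ell^\ast$ and the residue condition gives $M_N\to\infty$ and $\widetilde h^{(N)}\ge\frac{\log\#\mathcal{W}_{M_N}}{M_N+\ell^\ast}\to h(Y)$.

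Now fix $\epsilon>0$, use the claim to pick $n_0$ with $\widetilde h^{(n_0)}\ge h(Y)-\epsilon$, and set $n:=\max\bigl(n_0,\ \lceil 2(D+p)/\ell\rceil+1\bigr)$; this $n$ depends only on $\epsilon$, $\mathcal{G}$ and $\overline{u}$, not on $v_1,v_2$. Fix a large $m\equiv i\pmod p$ and put $\ell_c:=m-2D-p$. To each $b'\in\widetilde B^{(n_0)}_{\ell_c}$ assign a path in $G$ with label $b'$ and record its endpoints; by pigeonhole a single pair $(a_1^\ast,a_2^\ast)$ arises for at least $\#\widetilde B^{(n_0)}_{\ell_c}/|V(G)|^2$ of the $b'$. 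For those $b'$ the assigned path is an $a_1^\ast\to a_2^\ast$ path of length $\ell_c$, so $c(a_2^\ast)-c(a_1^\ast)\equiv\ell_c\pmod p$; using $m\equiv i$ one finds $L_\alpha,L_\beta\ge D$ with $L_\alpha+L_\beta=m-\ell_c=2D+p$ and the correct classes, hence paths $\alpha\colon v_1\to a_1^\ast$, $\beta\colon a_2^\ast\to v_2$ of these lengths, fixed once and for all. Then $\alpha w'\beta$ is a $v_1\to v_2$ path of length $m$ with label $\mathcal{L}(\alpha)\,b'\,\mathcal{L}(\beta)$; since $|\mathcal{L}(\alpha)|,|\mathcal{L}(\beta)|\le D+p$, $b'$ avoids $\overline{u}^{n_0}$, and $2(D+p)/\ell\le n-1$, an elementary argument about subwords of powers of $\overline{u}$ shows $\overline{u}^{2n}$ cannot occur in this label (an occurrence would force $\overline{u}^{n}$, hence $\overline{u}^{n_0}$, inside $b'$); and distinct $b'$ give distinct labels because $|\mathcal{L}(\alpha)|$ and $|\mathcal{L}(\beta)|$ are fixed. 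Thus $\#\mathcal{L}(S_m(v_1,v_2,n))\ge\#\widetilde B^{(n_0)}_{m-2D-p}/|V(G)|^2$ for all large $m\equiv i\pmod p$, and dividing by $m$ and letting $m\to\infty$ in this residue class yields
\[
\liminf_{j\to\infty}\frac{1}{j\p(X_G)+i}\log\#\mathcal{L}\bigl(S_{j\p(X_G)+i}(v_1,v_2,n)\bigr)\ \ge\ \widetilde h^{(n_0)}\ \ge\ h(Y)-\epsilon .
\]
The main obstacle is the entropy claim of the second paragraph, specifically choosing the separator $v^\ast$ so that it simultaneously glues (being synchronizing) and provably interrupts runs of $\overline{u}$ (not being a subword of $\overline{u}^{\infty}$); the padding argument is routine apart from the bookkeeping of path lengths modulo $\p(X_G)$.
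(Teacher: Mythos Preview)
Your proof is correct and takes a genuinely different route from the paper's. The paper argues by \emph{editing}: it shows there is a constant $k$ such that any path $w$ in $G$ of length $j$ can be modified on at most $\lceil kj/n\rceil$ coordinates---by replacing, inside each maximal $\overline{u}^r$-labeled stretch with $r\ge n$, certain length-$k\ell$ subpaths by alternative subpaths with the same endpoints but a different label---to obtain a path $w'$ with the same initial and terminal vertices whose label avoids $\overline{u}^{2n}$; the near-injectivity of $\mathcal{L}(w)\mapsto\mathcal{L}(w')$ then yields $\#\mathcal{L}(\mathcal{B}_{j,v_1,v_2}(X_G))\le(\#E_G)^{\lceil kj/n\rceil}\#\mathcal{L}(S_j(v_1,v_2,n))$ and the conclusion follows by choosing $n$ large. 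Your argument instead works first in the sofic shift $Y$: you establish the entropy claim $\sup_N\widetilde h^{(N)}=h(Y)$ by interleaving with a synchronizing separator $v^\ast\notin\mathcal{B}(\overline{u}^\infty)$, and only afterwards lift to $G$ via a pigeonhole on endpoints followed by padding with short connecting paths $\alpha,\beta$. The paper's approach stays entirely at the graph level and never invokes synchronizing words; your approach gives a clean conceptual separation (entropy estimate in $Y$, then routine lift to $G$) at the cost of using more of the sofic structure of $Y$. Both are valid here since $Y$ is irreducible sofic; the paper's method would transfer more readily to settings where synchronizing words are not available.
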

\begin{proof}
	Let $\overline{u}$ be as in the statement.
	We first claim that there exists $k \in \N$ such that for every $n,j \in \N$ with $n,j > k$, and any $w \in \mathcal{B}_j(X_G)$ there exists $w' \in \mathcal{B}_j(X_G)$ such that the following holds:
	\begin{itemize}
		\item[(1)] The edge path $w'$ has the same initial and terminal vertices as $w$;
		\item[(2)] The word $\overline{u}^{2n}$ does not occur as a subword of $\mathcal{L}(w')$;
		\item[(3)] $w'$ differs from $w$ by at most $\lceil \frac{k}{n}j \rceil$ coordinates.
	\end{itemize}

	To prove the claim, using the fact that $G$ is an irreducible graph and $X_{\mathcal{G}}$ has positive entropy, there exists $k \in \N$ with the following property: For every pair of vertices $v_1,v_2$ in $G$, if  there exists an edge path of length $k \ell$ labeled by $\overline{u}^k$ from $v_1$ to $v_2$, then there exists another edge path from $v_1$ to $v_2$ with the same length $k \ell$ but having $\mathcal{L}$-labeling different from $\overline{u}^k$. Take any  $n,j \in \N$ with $n,j > k$. 
	
	Now given a word $w \in \mathcal{B}_j(X_G)$ (corresponding to an edge path of length $j$ in the graph $G$), for each maximal subpath of $w$ labeled by $\overline{u}^r$ for some $r \ge n$, 
	replace each subpath of length $k \ell$ starting at coordinates $0, n \ell, 2n \ell, \ldots , \lfloor \frac{r}{n} \rfloor n\ell -n\ell$ with a different edge path of length $k \ell$ having the same initial and terminal vertices but whose $\mathcal{L}$-labeling is different from $\overline{u}^k$.
	This is well-defined because $\ell$, the length of $\overline{u}$, is the least period of $\overline{u}^\infty$, so maximal intervals $I$ on which $\mathcal L(w)_I = \overline{u}^r$ for some $r$ are disjoint.
	
	We claim that $w'$ satisfies the requirements (1), (2) and (3).  Items (1) and (3) are clear.
	As for item (2), if $\mathcal{L}(w')_{[i, i+2n\ell)} = \overline{u}^{2n}$ for some $i$, then for some $j = i ~mod ~\ell$, $i \le j < i +n$, we have
	$\mathcal L(w)_{[j, j + n\ell)} = \overline{u}^{n}$, in which case {$\mathcal L(w')_{[j, j + n\ell)} \ne \overline{u}^{n}$}, a contradiction to 
	$\mathcal L(w')_{[i, i+2n\ell)} = \overline{u}^{2n}$.
	
	
	Thus, if $\mathcal{B}_{j,v_1,v_2}(X_G)$ is the set of admissible edge paths in $X_G$ of length $j$  whose initial vertex is $v_1$ and terminal vertex is $v_2$, then for every $w\in \mathcal{B}_{j,v_1,v_2}(X_G)$, we can obtain a $w'\in S_j(v_1,v_2,n)$ by adjusting at most $\lceil \frac{k}{n}j \rceil$ many coordinates. Note that $\mathcal{L}(w)$ and $\mathcal{L}(w')$ also differ by at most $\lceil \frac{k}{n}j \rceil$ many coordinates. Thus, each $\mathcal L(w')$ corresponds to at most  $(\# E_G)^{\lceil \frac{k}{n}j\rceil}$ many distinct $\mathcal{L}(w)$'s where $E_G$ is the set of all edges of $G$. Hence,
	\[
	\# \mathcal{L}(\mathcal{B}_{j,v_1,v_2}(X_G))  \le (\# E_G)^{\lceil \frac{k}{n}j\rceil } (\# \mathcal{L}(S_j(v_1,v_2,n))).
	\]
	
	For each pair of vertices $v_1, v_2$, let $i:=i(v_1,v_2)$ be the unique integer between $0$ and $\p(X)-1$ such that all paths from $v_1$ to $v_2$ in $G$ are of length $i (\bmod \p(X_G))$. Choose $n$ large so that $\log (\# E_G)^{\lceil \frac{k}{n}\rceil }<\epsilon$. Then, 
	\begin{align*}
		& \quad \liminf_{j\to \infty} \frac{1}{j\p(X_G)+i} \log (\# \mathcal{L}(S_{j\p(X)+i}(v_1,v_2,n))) \\
		&\geq \liminf_{j\to \infty} \frac{1}{j\p(X_G)+i} \Large\left[\log (\# \mathcal{L}(\mathcal{B}_{j\p(X_G)+i, v_1, v_2}(X_G))) \right.\\
		& \ \ \ \ \ \ \ \ \ \ \ \ \ \ \ \ \ \ \ \ \ \ \ \ \ \ \ \ \ \left.-\log \left((\# E_G)^{\lceil \frac{k}{n}(j\p(X_G)+i)\rceil }\right)\right] \\
		 &\geq h(Y)-\epsilon,
	\end{align*}
where the last inequality is true because there exists a positive integer $M$  such that for any pair of vertices $v_1', v_2'$ of $G$, 
$$
\# \mathcal{L} (\mathcal{B}_{j\p(X_G)+i', v_1', v_2'} (X_G)) \leq  \# \mathcal{L} (\mathcal{B}_{(j+M)\p(X_G)+i, v_1, v_2} (X_G)).
$$
where $i'=i(v_1', v_2')$. 
	\end{proof}

	\begin{lemma}\label{lem:large_entropy)injectively_fact_SFT}
		Let $\pi:X \to Y$ be a factor code from an irreducible subshift of finite type $X$ onto a sofic shift $Y$ of positive entropy. Then for every $\epsilon >0$ there exists an irreducible SFT $W \subseteq X$ such that $\pi\vert_W:W \to Y$ is injective, $h(W) \ge h(Y) -\epsilon$ and the global period of $W$ is the same as that of $X$.
	\end{lemma}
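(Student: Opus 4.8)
The plan is to recast the statement in the language of a labelled graph and then to construct $W$ as a strictly alternating concatenation of one fixed \emph{marker} block with a large finite supply of \emph{filler} blocks, using \reflemma{lem:eliminating_rep_pattern} to certify that the fillers still carry almost all of the entropy of $Y$. First I would invoke \reflemma{08-05-24c} to assume $X=X_G$ for a finite irreducible graph $G$ and $\pi=\mathcal{L}_\infty$ for an edge labelling $\mathcal{L}$ into the alphabet of $Y$; write $p:=\p(X_G)=\p(X)$. Since $h(X_G)=h(X)\ge h(Y)>0$, I pick a periodic point $y^\ast\in Y$ of least period $\ell$ and set $\overline u:=y^\ast_0\cdots y^\ast_{\ell-1}$, which is primitive. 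As $\pi$ is surjective, $\pi^{-1}$ of the orbit of $y^\ast$ is a nonempty sub-SFT of $X_G$, hence contains a periodic point $z^\ast$; let $L$ be its least period (so $p\mid L$ and $\ell\mid L$), let $v^\ast:=i_G(z^\ast_0)$, and let $\delta:=z^\ast_{[0,L)}$, a closed path at $v^\ast$ whose label is $\overline u^{L/\ell}$ after rotating $y^\ast$. Apply \reflemma{lem:eliminating_rep_pattern} to $(G,\mathcal{L})$, $y^\ast$ and $\epsilon$ to obtain $n_0\in\N$; enlarging $n_0$ (the conclusion only improves for larger values) we may assume $L\mid n_0\ell$, and put $n:=2n_0$. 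Since closed paths through $v^\ast$ have length divisible by $p$, the lemma applied with $v_1=v_2=v^\ast$ (for which the relevant residue is $0$) gives
\[
\liminf_{j\to\infty}\frac{1}{jp}\log\#\mathcal{L}\bigl(S_{jp}(v^\ast,v^\ast,n_0)\bigr)\ \ge\ h(Y)-\epsilon ,
\]
where $S_{jp}(v^\ast,v^\ast,n_0)$ is the set of closed paths at $v^\ast$ of length $jp$ whose label avoids $\overline u^{n}=\overline u^{2n_0}$.

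Next I would fix an unbordered word $g$ that is not a factor of $\overline u^{\infty}$ and that labels a closed path $\gamma$ at $v^\ast$ — such $g$ exist because $G$ is irreducible with $h(X_G)>0$, only boundedly many long loops at $v^\ast$ failing these conditions — and take the marker block to be $\Delta:=\gamma\,\delta^{\,n\ell/L}\,\gamma$, a closed path at $v^\ast$ with label $\mu:=g\,\overline u^{n}g$. Then, for a large parameter $J$, I select one representative path $\beta(w)\in S_{Jp}(v^\ast,v^\ast,n_0)$ for each of its labels $w$, and, for some large $a$ with $ap,(a+1)p\ne Jp$, one block each from $S_{ap}(v^\ast,v^\ast,n_0)$ and $S_{(a+1)p}(v^\ast,v^\ast,n_0)$; call the resulting finite set of blocks $\mathcal{F}$, whose members have pairwise distinct labels. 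Define $W$ to be the set of bi-infinite paths in $X_G$ that strictly alternate $\Delta$ with members of $\mathcal{F}$. All constituent blocks being closed at $v^\ast$, this is an irreducible SFT inside $X_G$; the lengths of its periodic orbits are nonnegative combinations of $|\Delta|+Jp$, $|\Delta|+ap$, $|\Delta|+(a+1)p$, the last two differing by $p$, so $\p(W)=p=\p(X)$; and its entropy is $\log\lambda$ with $\sum_{c\in\mathcal{F}}\lambda^{-(|\Delta|+|c|)}=1$, so the displayed estimate forces $h(W)\ge h(Y)-\epsilon$ once $J$ is large (after decreasing $\epsilon$ slightly at the outset).

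It remains to verify that $\pi|_W=\mathcal{L}_\infty|_W$ is injective, and this is where the real work lies. Given $x\in W$ with $y:=\mathcal{L}_\infty(x)$, I must show that the occurrences of $\mu$ — equivalently, of the block $\Delta$ — can be located inside $y$ alone: a Fine--Wilf-type argument using that $\overline u$ is primitive, that every member of $\mathcal{F}$ avoids $\overline u^{n}$, and the genericity of $g$, should show that the maximal $\overline u$-runs of length $\ge n\ell$ in $y$ are exactly the central blocks of the occurrences of $\mu$, with nothing straddling a marker. Once the positions of $\Delta$ in $x$ are known, the stretch of $y$ strictly between two consecutive markers is the label of a single, hence uniquely determined, member of $\mathcal{F}$; so $x$ is reconstructed from $y$.

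I expect the main obstacle to be the simultaneous handling of two competing demands: making the marker combinatorially recognizable inside the image (which is what forces us to forbid $\overline u^{n}$ inside the fillers and to flank the core by a carefully chosen $g$), and controlling the entropy loss this incurs. The second of these is precisely the content of \reflemma{lem:eliminating_rep_pattern}, which is the conceptual heart of the argument; the first is the most delicate bookkeeping, and primitivity of $\overline u$ is essential to it. The global-period claim, by contrast, is a minor point, handled by the two extra fillers of lengths differing by $p$.
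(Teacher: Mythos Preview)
Your approach is essentially the paper's: recode to a labelled graph, invoke \reflemma{lem:eliminating_rep_pattern} to obtain a large supply of filler paths whose labels avoid a long $\overline u$-power, and build $W$ as alternations of a fixed marker block (carrying that $\overline u$-power) with one chosen representative per filler label, recovering injectivity by locating the marker in the image and matching the period via two fillers whose lengths differ by $p$. The paper's marker is $v=ab_1u^ib_2a$ with $\mathcal L(a)$ not a rotation of $\overline u$ and $\overline u^{\,k}$ appearing exactly once in $\mathcal L(v)$ (with $k\ge 2n+2$), which makes the ``find $\mathcal L(v)$ in $y$'' step slightly more direct than your unbordered-$g$/Fine--Wilf sketch, and it allows variable filler lengths up to $N$ rather than one fixed length plus two extras---but these are cosmetic differences.
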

	\begin{proof}
		By Lemma \ref{08-05-24c}, we can assume that $X=X_G$ for some finite irreducible graph $G$ and that $\pi=\mathcal{L}_\infty$ for some labeling function $\mathcal{L}$ of the edges of $G$ by the letters of the alphabet of $Y$.
		
		Fix $\epsilon >0$. We assume without loss of generality that $\epsilon < h(Y)$. 
		
		Choose some  periodic point $x \in X$ with least period $\ell$. Suppose that $x_{[1,\ell]}=u \in \mathcal{B}_\ell(X)$, where $u=u_1\ldots u_\ell$, and let $\overline{u} = \mathcal{L}(u) \in \mathcal{B}_\ell(Y)$ be the $\mathcal{L}$-labeling of $u$ (note that $\ell$ must be a multiple of $\p(X)$).
		We also choose $n$ such that the statement of  Lemma~\ref{lem:eliminating_rep_pattern} holds.


		We claim that there exists $v\in \mathcal{B}(X)$ such that the following holds:
		\begin{enumerate}
		\item[(i)] $\overline{u}^k$ appears exactly once in $\mathcal{L}(v)$, where $k$ is the maximum $j\in \mathbb{N}$ such that $\overline{u}^j$ appears in $\mathcal{L}(v)$ and $k\geq 2n+2$;
		\item[(ii)] the $\ell$-prefix and the $\ell$-suffix of $\mathcal{L}(v)$ are not cyclic permutations of $\overline{u}$.
		\end{enumerate}	
		To prove this claim, first note that there exists a word $a\in \mathcal{B}_\ell(X)$ such that $\mathcal{L}(a)$ is not a cyclic permutation of $\overline{u}$ because $Y$ has positive entropy. Since $X=X_G$ is irreducible, there exist $b_1,b_2\in \mathcal{B}(X)$ such that $ab_1 u^i b_2a \in \mathcal{B}(X)$ for  all $i$. Choose $v:= ab_1u^i b_2a$ for large enough $i$. Then it is not difficult to see that $v$ satisfies (i) and (ii).
		
		Denote $m:= \vert v\vert$ and let $s$ and $t$ be the initial and terminal vertices of $v$, respectively.
		
		For each $j$, let $S_j(t, s, n)$ be defined as in Lemma~\ref{lem:eliminating_rep_pattern}. Then, by Lemma~\ref{lem:eliminating_rep_pattern}, we can find $N \in \N$  sufficiently large so that
		\[
		\frac{1}{m+N}\log (\# \mathcal{L}(S_N(t, s, n))) \ge h(Y) -\epsilon.
		\]
		and also
		\[
		\frac{1}{m+N}\log (\# \mathcal{L}(S_{N-\p(X)}(t, s, n))) \ge h(Y) -\epsilon.
		\]

		For every $1 \le j \le N$ and any $w \in \mathcal{L}(S_j(t, s,n))$ there exists $ \tilde{w} \in S_j(t, s,n)$  such that $\mathcal{L}(\tilde w) =w$,  and noting that $v$ is a path in $G$ from $s$ to $t$, we have $v\tilde wv \in \mathcal{B}(X)$. 
		Choose for every $1 \le j \le N$ a function $\Psi_j:\mathcal{L}(S_j(t, s,n))\to S_j(t, s,n)$ such that $\mathcal{L}(\Psi_j(w)) =w$ and $v\Psi_j(w)v \in \mathcal{B}(X)$.
		
		We now define the subshift $W \subseteq X$. A point $x \in X$ is in $W$ if and only if it  satisfies the following constraints:
		\begin{enumerate}
			\item The word $v$ occurs in $x$ with gaps bounded by $N$.
			\item Whenever $\tilde{w} \in \mathcal{B}_j(X)$ is a word that occurs between two consecutive occurrences of $v$, then $\mathcal{L}(\tilde{w}) \in \mathcal{L}(S_j(t, s,n))$ and $\Psi_j(\mathcal{L}(\tilde{w}))=\tilde{w}$.
		\end{enumerate}
		
		Then $W$ is a subshift of finite type because the new constraints are on words of bounded length, and it is clearly irreducible.
		
		If $x,x' \in W$ and $y:= \pi(x') = \pi(x)$ then the word $\mathcal{L}(v)$ occurs in $\pi(x')=\pi(x)$ with gaps at most $N$, and the occurrences of  the word $\mathcal{L}(v)$ precisely correspond to the occurrences of $v$ in both $x$ and $x'$, because $v$ satisfies (i) and (ii). Also, any occurrence of  $w \in \mathcal{B}_j(\mathcal{L}_{\infty}(W))$ for $1\le j \le N$ between two consecutive occurrences of the word $\mathcal{L}(v)$  corresponds to an occurrence of $\Psi_j(w)$ in both $x$ and $x'$. This shows that $x=x'$, so $\pi\vert_W$ is  injective.
		
		We now show that $\p(W)=\p(X)$. Since $W \subseteq X$ we have that $\p(W)$ is a multiple of $\p(X)$.
		By the choice of $N$, there exist words $ w\in \mathcal{L}(S_N(t, s, n))$ and $w' \in \mathcal{L}(S_{N-\p(X)}(t, s, n))$. So the points
		$x = (v\Psi_N(w))^\infty$ and $x'=(v\Psi_{N-\p(X)}(w'))^\infty$ are both periodic points in $W$, having periods $m+N$ and $m+N-\p(X)$ respectively. Hence the gcd of the lengths of periodic points in $W$ divides $\p(X)$, which proves that $\p(W)=\p(X)$.
		
		It remains to check that $h(W) \geq h(Y) -\epsilon$.
		Indeed, for every $i \in \N$ and every $w^{(1)},\ldots,w^{(i)} \in \mathcal{L}(S_N(t, s, n))$ we have that
		\[
		v\Psi_N(w^{(1)})v\Psi_N(w^{(2)})\ldots v\Psi_N(w^{(i)}) \in \mathcal{B}_{i(N+m)}(W).
		\]
		So
		\[
		\log\left(\# \mathcal{B}_{i(m+N)}(W)\right) \ge i \log(\# \mathcal{L}(S_N(t, s, n))) \ge i(m+N) (h(Y) -\epsilon).
		\]
		Dividing by $i(N+m)$ and taking $i \to \infty$ we conclude that $h(W) \ge h(Y) -\epsilon$.
	\end{proof}

\begin{lemma}\label{09-06-24} Let $\mathcal G =(G,\mathcal{L})$ be a labeled graph with $G$ irreducible. Let $T_0 \subseteq X_G$ be an irreducible SFT of positive entropy such that $\mathcal{L}_\infty : T_0 \to X_\mathcal G$ is injective. Let $u \in X_G$ be a periodic point such that the least period of $u$ is the same as that of $\mathcal{L}_\infty (u)$ and $\mathcal L_\infty(u) \notin \mathcal L_\infty(T_0)$.	
	Then, there is an irreducible SFT $T \subseteq X_G$ such that $T_0 \cup \{u\} \subseteq T$ and $\mathcal{L}_\infty: T \to X_\mathcal G$ is injective.
\end{lemma}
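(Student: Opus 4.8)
The plan is to realize $T$ as the edge shift of a subgraph $H$ of $G$ obtained by joining the cycle carrying $u$ to a subgraph presenting $T_0$ through two long ``marked'' bridges, and then to check that the marking makes the bridges legible from the $\mathcal{L}$-image, which is what keeps $\mathcal{L}_\infty|_T$ injective. Using \reflemma{08-05-24d} and a further passage to a higher block presentation \cite{LM}, I would first arrange that $T_0=X_{G_0}$ for an irreducible positive-entropy subgraph $G_0\subseteq G$, that $u=C^\infty$ for a simple cycle $C=e_0\cdots e_{p-1}$ in $G$, where $p$ is the common least period of $u$ and of $\bar v:=\mathcal{L}_\infty(u)$, and --- using $u\notin T_0$ together with the SFT property of $T_0$, with window length exceeding $p$ plus the step of $T_0$ --- that $V(C)\cap V(G_0)=\varnothing$. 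I record three facts. (a) $\mathcal{L}_\infty|_{T_0}$ is a conjugacy onto $Y_0:=\mathcal{L}_\infty(T_0)$, so its inverse is a sliding block code. (b) Since $p$ is also the least period of $\bar v$, the word $\bar a:=\mathcal{L}(e_0)\cdots\mathcal{L}(e_{p-1})$ is primitive, so $\mathcal{L}_\infty$ maps the $p$-element set $\orb(u)$ bijectively onto $\orb(\bar v)$; this is precisely where the hypothesis on the least periods is used --- without it the lemma is false. (c) From $\bar v\notin Y_0$ and shift-invariance we get $\orb(\bar v)\cap Y_0=\varnothing$ and $Y_0\subsetneq X_{\mathcal{G}}$; moreover $Y_0$ has positive entropy, so $X_{\mathcal{G}}\ne\orb(\bar v)$.

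\emph{Construction.} By (c) there is a word in $\mathcal{B}(X_{\mathcal{G}})$ lying neither in $\mathcal{B}(Y_0)$ nor among the factors of $\bar v$ (concatenate, using irreducibility of $X_{\mathcal{G}}$, a word outside $\mathcal{B}(Y_0)$ with a word that is not a factor of $\bar v$); fix such a word $\bar\mu=\mathcal{L}(\mu)$ for a path $\mu$ in $G$. Fix $q\in V(G_0)$ and, using irreducibility of $G$, choose a path $\gamma$ from $q$ to the base vertex $z_0$ of $C$ and a path $\delta$ from $z_0$ to $q$, each traversing a copy of $\mu$; after routine padding and re-choice I may assume that $\gamma,\delta$ are simple, distinct in label, have interiors disjoint from $V(G_0)\cup V(C)$ and from each other, and that $\bar\mu$ occurs in the label of any path in $G$ through $H$ only over a copy of $\mu$ lying inside a bridge (take $\bar\mu$ unbordered and isolated within $\mathcal{L}(\gamma)$ and $\mathcal{L}(\delta)$). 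Let $H\subseteq G$ be the subgraph with edge set $E(G_0)\cup E(C)\cup E(\gamma)\cup E(\delta)$, and set $T:=X_H$. Then $T\subseteq X_G$ is an SFT, $T_0=X_{G_0}\subseteq T$ and $u\in X_C\subseteq T$, and $T$ is irreducible because $G_0$ is irreducible and $\gamma,\delta$ connect $C$ to $q$ in both directions.

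\emph{Injectivity.} Since $V(G_0)$, $V(C)$ and the bridge interiors are pairwise disjoint and each bridge-interior vertex has in- and out-degree $1$ in $H$, every $x\in T$ is a bi-infinite concatenation of maximal $G_0$-blocks, maximal $C$-blocks, and the bridge blocks $\gamma,\delta$ (which alternate, $\gamma$ running from a $G_0$-block to a $C$-block and $\delta$ back), the degenerate cases being $x\in T_0$, $x\in\orb(u)$ and one-sided limits. Let $x,x'\in T$ with $\mathcal{L}_\infty(x)=\mathcal{L}_\infty(x')=:y$. If $y\in Y_0$, then $\bar\mu$ does not occur in $y$, so neither point uses a bridge; each, being a path in $G_0\sqcup C$, then lies wholly in $G_0$ (wholly in $C$ is impossible, as it would force $y\in\orb(\bar v)$), so $x=x'$ by (a). If $y\in\orb(\bar v)$, then again $\bar\mu$ does not occur in $y$, so neither point uses a bridge, and since $y\notin Y_0$ both lie in $X_C=\orb(u)$, so $x=x'$ by (b). Otherwise $x$, hence also $x'$, uses at least one bridge; the occurrences of $\bar\mu$ in $y$ then sit exactly at the bridge sites, and since $\mathcal{L}(\gamma)$ and $\mathcal{L}(\delta)$ are fixed distinct words with $\bar\mu$ at a fixed position inside each, they determine the decomposition of $y$ into bridge blocks, $\bar a$-power blocks, and $\mathcal{B}(Y_0)$-blocks; the bridge blocks recover the corresponding bridge edges of $x$, the $\bar a$-power blocks recover the corresponding cycle edges (their cyclic phase being fixed by the adjoining bridge), and the $\mathcal{B}(Y_0)$-blocks recover the corresponding $G_0$-edges by applying the inverse of $\mathcal{L}_\infty|_{T_0}$. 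As this reconstruction uses only $y$, we conclude $x=x'$, so $\mathcal{L}_\infty|_T$ is injective.

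The one genuinely delicate step is the injectivity argument, and inside it the point that a bridge stretch of one point can never accidentally match a $G_0$-stretch or a cycle stretch of another; this is exactly what the marker $\mu$ handles --- its label avoids $\mathcal{B}(Y_0)$ and is not a factor of the periodic word $\bar v$ --- and the care lies in arranging that $\bar\mu$ is unambiguously locatable inside $y$. The recodings, the vertex-disjointness of the bridges, the SFT and irreducibility assertions, and the $\bar a$-phase alignment are routine.
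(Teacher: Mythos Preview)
Your overall strategy---glue the cycle $C$ to the subgraph $G_0$ via two bridge paths carrying a marker, then decode from the label by locating the marker---is sound and close in spirit to what the paper does. But the step you flag as ``routine padding and re-choice'' hides a genuine obstruction. You require the bridges $\gamma,\delta$ to simultaneously (i) have interiors disjoint from $V(G_0)\cup V(C)$, so that every $x\in X_H$ decomposes cleanly into $G_0$-blocks, $C$-blocks and whole bridges, and (ii) traverse a pre-chosen marker path $\mu$ whose label lies outside $\mathcal B(Y_0)$ and outside the factors of $\bar v$. These pull in opposite directions: requirement (i) forces $\gamma$ to be the segment of a $G_0$-to-$C$ path between its \emph{last} exit from $V(G_0)$ and its \emph{first} entry to $V(C)$, and that segment can be a single edge, leaving no room for $\mu$; conversely, inserting $\mu$ into $\gamma$ by irreducibility of $G$ gives you no control over which vertices the path visits. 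Passing to a higher-block presentation lengthens the ``clean'' bridge but does not let you place a specific $\mu$ inside it, since $\mu$ is a path in the old $G$ and you have no control over whether its higher-block avatar avoids $V(G_0')\cup V(C')$. Even the weaker requirement that $\bar\mu$ occur in $\mathcal L$-labels of $H$-paths only at the designated bridge sites is not something a further recoding will buy you for free. So as written the construction of $H$ is incomplete.

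The paper sidesteps exactly this tension by not insisting that $T$ be the edge shift of a subgraph. It takes the marker to be a high power $\mathcal L(p)^R$ of the cycle label itself (choosing $R$ so that $\mathcal L(p^R)\notin\mathcal B(\mathcal L_\infty(X_H))$), chooses connector paths $I,J$ in $G$ with only the mild conditions $\mathcal L(Ip),\mathcal L(pJ)\notin\mathcal B(\mathcal L_\infty(p^\infty))$ and $i_G(I)=t_G(J)=v\in V(H)$, and then \emph{defines} $T$ as those bi-infinite $G$-paths built from long $H$-excursions at $v$, blocks $Ip^kJ$ with $k$ large, and the obvious one- and two-sided limits. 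The length lower bounds, rather than vertex-disjointness of bridges, are what make the decomposition recoverable from the label; no simultaneous geometric constraints on $I,J$ are needed. If you want to salvage your approach, the cleanest fix is to drop the edge-shift requirement and impose analogous length constraints instead, using a high power of $\bar a$ (rather than an external $\bar\mu$) as the marker.
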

\begin{proof}  By Lemma \ref{08-05-24d} we may assume that there is an irreducible subgraph $H$ of $G$ such that $T_0 = X_H$. Write $u=p^\infty$ where $p$ is a cycle in $G$. Since $\mathcal L_\infty(u) \notin \mathcal L_\infty(X_H)$ we can choose $R \in \mathbb N$ such that
\begin{equation}\label{18-01-25}
\mathcal L(p^R) \notin \mathcal {B}(\mathcal L_\infty(X_H)).
\end{equation}
Since $\mathcal L_\infty(X_H)$ has positive entropy by assumption, there are finite paths $I$ and $J$ in $G$ such that $Ip^kJ\in \mathcal {B}(X_G)$ for all $k \in \mathbb N$,
\begin{equation}\label{18-01-25a}
\mathcal L(Ip) \notin \mathcal {B}(\mathcal L_\infty(p^\infty))
\end{equation}
and
\begin{equation}\label{18-01-25b}
\mathcal L(pJ) \notin \mathcal {B}(\mathcal L_\infty(p^\infty)).
\end{equation}
Since $G$ is irreducible we can prolong $I$ to arrange that $I$ starts where $J$ ends; in fact, we can arrange that there is a vertex $v$ in $H$ such that $i_G(I) = t_G(J) =v$. Let $T \subseteq X_G$ be the elements of $X_G$ that present bi-infinite paths in $G$ obtained by concatenation of paths in $G$ of the following types:

    \begin{itemize}
    \item[(a)] finite paths in $H$ starting and ending at $v$, and of length exceeding $4 (|I| + |J| + |p^R|)$,
    \item[(b)] finite paths of the form $Ip^kJ$ for some $k \geq R +|I| + |J|$,
    \item[(c)] left-infinite paths in $H$ ending at $v$,
    \item[(d)] the left-infinite path $p^\infty J$,
    \item[(e)] right-infinite paths in $H$ starting at $v$,
    \item[(f)] the right-infinite path $I p^\infty$,
    \item[(g)] bi-infinite paths in $H$, and
    \item[(h)] the bi-infinite path $p^\infty$.
    \end{itemize}
 We claim that $T$ has the required properties. That $T$ is an SFT can be seen by checking that all words in $\mathcal {B}(T)$ of length $\geq 4(|I| + |J| + R|p|)$ are synchronizing for $T$, and $T$ is irreducible by construction and because $H$ is irreducible.  To see $\mathcal L_\infty : T \to X_\mathcal G$ is injective, let $z \in T$. Thanks to \eqref{18-01-25}, \eqref{18-01-25a} and \eqref{18-01-25b}, the occurrences of the word $\mathcal L(p)^{R+|I|+|J|}$ in $\mathcal L_\infty(z)$ give away the intervals in $z$ of the form (b),(d),(f),(h). All these intervals are determined by their image under $\mathcal L$ because $u$ and $\mathcal L(u)$ have the same least period by assumption. The remaining intervals are either bi-infinite or represent finite or infinite paths in $H$ starting and /or ending at the vertex $v$. These intervals are determined by their images under $\mathcal L$ because $\mathcal L_\infty$ is injective on $X_H$ by assumption, or in the case where $z$ represents the infinite path $p^\infty$, because $\mathcal L_\infty$ is injective on the orbit of $u$.
\end{proof}

In our next result, we iterate the procedure from Lemma~\ref{09-06-24} to enlarge an irreducible SFT $W$ embedded in $X$, on which the restriction of $\pi:X \to Y$   is injective, to a new irreducible SFT $W$ which contains several additional periodic points, and the restriction is still injective.

\begin{thm}\label{08-05-24g} Let $\pi : X \to Y$ be a factor code from an irreducible subshift of finite type $X$  onto a sofic shift $Y$ of positive entropy. Let $p \in \mathbb N$ be the global period of $X$. For each $\epsilon  > 0$ there is an $N_\epsilon \in \mathbb N$ such that for all $M \in \mathbb N$ there is an irreducible SFT $W \subseteq X$ of global period $p$ with the following properties:
	\begin{enumerate}
		\item $\pi : W \to Y$ is injective,
		\item $q_{np}(W) \geq e^{np(h(Y)-\epsilon)} \mbox{ when } \ n \geq N_\epsilon$,
		\item $q_{np}(W) \geq r_{np}(\pi) \mbox{ when }\ n \leq M$.
	\end{enumerate}

\end{thm}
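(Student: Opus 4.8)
The proof proceeds by combining the two ingredient lemmas — Lemma~\ref{lem:large_entropy)injectively_fact_SFT} to produce a base SFT with entropy close to $h(Y)$, and Lemma~\ref{09-06-24} to add missing periodic points one at a time — and then controlling the iteration carefully enough to get the uniform constant $N_\epsilon$ independent of $M$. First I would fix $\epsilon>0$ and apply Lemma~\ref{lem:large_entropy)injectively_fact_SFT} (with $\epsilon/2$, say) to obtain an irreducible SFT $W_0\subseteq X$ of global period $p$ on which $\pi$ is injective and with $h(W_0)\ge h(Y)-\epsilon/2$. Since $W_0$ is an irreducible SFT of global period $p$ and positive entropy, standard counting (e.g.\ via the Perron eigenvalue of $(A^p)$ restricted to one component of the canonical cyclic partition) gives $q_{np}(W_0)\ge e^{np(h(W_0)-\epsilon/2)}\ge e^{np(h(Y)-\epsilon)}$ for all $n$ beyond some threshold $N_\epsilon$; crucially this $N_\epsilon$ depends only on $W_0$, hence only on $\epsilon$, and in particular \emph{not} on $M$. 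This takes care of properties (1) and (2) for $W_0$, and they will be preserved under the enlargement in the next step.

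**Enlarging to capture low periods.** Now fix $M\in\mathbb N$. For each $n\le M$, I want $q_{np}(W)\ge r_{np}(\pi)$. Recall $r_{np}(\pi)=\#\big(Q_{np}(Y)\cap \pi(Q_{np}(X))\big)$: it counts points $y\in Y$ of least period $np$ that are the image of some $u\in X$ of least period $np$. For each such $y$, pick one preimage $u\in Q_{np}(X)$ with least period $np$ (note the least period of $u$ is automatically a multiple of $p$ and divides $np$; if $\pi(u)=y$ has least period exactly $np$ then $u$ must too, so $u\in Q_{np}(X)$). If $y\notin\pi(W_0)$, then $\mathcal L_\infty(u)\notin\mathcal L_\infty(W_0)$ (after the recoding of Lemma~\ref{08-05-24c}, so that $\pi=\mathcal L_\infty$ on $X=X_G$), and $u$ has the same least period as $\pi(u)$, so Lemma~\ref{09-06-24} applies and lets me enlarge $W_0$ to an irreducible SFT $W_1$ with $W_0\cup\{u\}\subseteq W_1$, still with $\pi|_{W_1}$ injective. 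Iterate over a (finite) set of representatives — one $u$ for each $y\in \bigcup_{n\le M}\big(Q_{np}(Y)\cap\pi(Q_{np}(X))\big)$ not yet covered — to obtain the final $W$. Because the collection of such $y$ is finite, the iteration terminates after finitely many steps, producing an irreducible SFT $W\subseteq X$ on which $\pi$ is injective and which contains, for every $n\le M$, a full set of points of least period $np$ whose $\pi$-images exhaust $Q_{np}(Y)\cap\pi(Q_{np}(X))$; since $\pi|_W$ is injective and these points have least period $np$ in $W$ (their least period cannot drop inside $W\subseteq X$), this gives $q_{np}(W)\ge r_{np}(\pi)$, which is property (3).

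**The points requiring care.** Three things need checking. First, that the global period stays $p$ throughout: $W_0$ has global period $p$, and Lemma~\ref{09-06-24} only adds periodic points $u$ whose least period is a multiple of $p$ (being the least period of a point in $X$, which has global period $p$), so the gcd of periods in each $W_i$ remains $p$ — I should state this explicitly, perhaps noting that the construction in Lemma~\ref{09-06-24} via paths of type (a)--(h) keeps $W_0\subseteq W_i$ and adds nothing with period not divisible by $p$. Second, that properties (1) and (2) survive the enlargements: injectivity is guaranteed by Lemma~\ref{09-06-24} at each step, and $q_{np}(W)\ge q_{np}(W_0)$ since $W_0\subseteq W$ and least periods are computed in the ambient shift — so (2) is automatic once $W\supseteq W_0$. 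Third — and this is the point I expect to be the real obstacle, or at least the place where one must be most careful — the order of quantifiers: $N_\epsilon$ must be chosen \emph{before} $M$. The resolution is exactly that $N_\epsilon$ is extracted from $W_0$ alone, and all subsequent enlargements only increase $q_{np}(W)$, never decrease it, so the entropy bound (2) with the same $N_\epsilon$ continues to hold for the enlarged $W$. I would make sure to phrase the argument so that this independence is transparent: choose $W_0$ and $N_\epsilon$ first, then for each given $M$ run the finite enlargement procedure to upgrade (3) without touching (1) or (2).
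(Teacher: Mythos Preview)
Your proposal is correct and follows essentially the same approach as the paper: apply Lemma~\ref{lem:large_entropy)injectively_fact_SFT} to obtain a base SFT $W_0$ (the paper's $\tilde W$) of period $p$ on which $\pi$ is injective and with entropy close to $h(Y)$, extract $N_\epsilon$ from the periodic-point count of $W_0$ alone, and then, for a given $M$, iterate Lemma~\ref{09-06-24} finitely many times to adjoin the missing low-period points, noting that $W_0\subseteq W\subseteq X$ forces $\p(W)=p$ and that enlargement can only increase $q_{np}$. Your use of $\epsilon/2$ is in fact slightly more careful than the paper's direct appeal to $h(\tilde W)>h(Y)-\epsilon$, since Lemma~\ref{lem:large_entropy)injectively_fact_SFT} only guarantees a weak inequality.
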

\begin{proof}
Let $\pi:X \to Y$ be as in the statement, and fix $\epsilon >0$.
By Lemma \ref{lem:large_entropy)injectively_fact_SFT} we can find an irreducible SFT $\tilde{W} \subseteq X$ with $\p(\tilde{W})=p$ such that $\pi|_{\tilde{W}}$ is injective and $h(\tilde{W}) > h(Y) -\epsilon$. Since $h(\tilde{W}) = \lim_{n \to \infty} \frac{1}{np} \log q_{np}(\tilde{W})$, there exists $N_\epsilon \in\N$ such that $q_{np}(\tilde{W}) \geq e^{np(h(Y)-\epsilon)}$ when $n \geq N_\epsilon$. Given $M \in \N$ we can apply Lemma \ref{09-06-24} at most $\sum_{k=1}^M r_{kp}(\pi)$ times to get an irreducible SFT $W$ such that $\tilde{W} \subseteq W \subseteq X$, $\pi|_W$ is injective and (3) holds. Since $\tilde{W} \subseteq W$, it follows that (2) also holds.	
\end{proof}

\begin{remark}\label{24-05-24} We note that item (1) and item (3) in Theorem \ref{08-05-24g} imply that $q_{np}(W) = r_{np}(\pi)$ for $n \leq M$.
\end{remark}

The following result is of interest in its own right. The sufficiency part of it is a version of Krieger's theorem, Theorem \ref{Krieger}, where the target is only assumed to be irreducible, and not necessarily mixing. To formulate it we use the following notion of period for subshifts.

\begin{defn}
\label{p-periodic}
Let $p \in \mathbb N \backslash \{0\}$. A subshift $Z$ is called \emph{$p$-periodic} when there is a partition $Z = \sqcup_{i=0}^{p-1} Z_i$ of $Z$ into clopen sets $Z_i$ such that $\sigma(Z_i) = Z_{i+1}$ for all $i$, where the addition $i+1$ is taken modulo $p$. Such a partition will be referred to as a \emph{cyclic partition} of $Z$.
\end{defn}

We note that when $Z$ is $p$-periodic and $q \in \mathbb N$ divides $p$, then $Z$ is also $q$-periodic. In particular, $Z$ can be $p$-periodic for a natural number $p$ smaller than the period of $Z$ as defined in Definition 4.5.4 of \cite{LM}, i.e., the gcd of the periods of the periodic points in $Z$.

\begin{prop}\label{prop:irreducible_emb}

	Let $W$ be an irreducible SFT with global period $p$, and let $Z$ be any subshift. Then $Z$ embeds into $W$ if and only if $Z$ is conjugate to $W$ or all of the following conditions hold:
	\begin{itemize}
	\item[(1)] $h(Z) < h(W)$,
	\item[(2)] $Z$ is $p$-periodic, and
	\item[(3)] $q_{np}(Z) \leq q_{np}(W)$ for all $n \in \mathbb N$.
	\end{itemize}

\end{prop}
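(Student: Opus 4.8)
The plan is to reduce the statement to Krieger's theorem (Theorem~\ref{Krieger}) by passing to the mixing SFT $W^{(0)} := W_0$ obtained from the canonical cyclic partition $W = \sqcup_{i=0}^{p-1} W_i$, applied to the $\sigma^p$-action. First I would dispose of the easy direction: if $Z$ is conjugate to $W$ there is nothing to prove, and if (1)--(3) hold we must produce an embedding. Conversely, if $Z \hookrightarrow W$ and $Z$ is not conjugate to $W$, then the image is a proper subshift of the irreducible SFT $W$, hence $h(Z) < h(W)$, giving (1); condition (3) is the obvious necessary condition on periodic points of least period $np$ (note every periodic point of $W$ has least period divisible by $p$, so only multiples of $p$ are relevant); and (2) follows because the canonical cyclic partition of $W$ pulls back along the embedding to a cyclic partition of $Z$ into clopen sets — this uses that $\psi$, being a sliding block code, is continuous and shift-commuting, so $\psi^{-1}(W_i)$ are clopen, disjoint, cover $Z$, and are cyclically permuted by $\sigma$.

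For the sufficiency direction, the key idea is that $p$-periodicity of $Z$ lets us ``slice'' $Z$ the same way $W$ is sliced. Given the cyclic partition $Z = \sqcup_{i=0}^{p-1} Z_i$, set $\hat Z := Z_0$ viewed as a subshift under $\sigma^p|_{Z_0}$, after a standard recoding so that $\sigma^p$ on $Z_0$ becomes an honest shift on a subshift over a finite alphabet (a higher-block recoding of $Z$ adapted to the partition; one must check $Z_0$ is a subshift for this new shift map, which is where the clopenness of the $Z_i$ is essential). Similarly $W_0$ under $\sigma^p$ is, after the analogous recoding, a mixing SFT $\hat W$ with $h(\hat W) = p\, h(W)$ and $q_n(\hat W)$ related to $q_{np}(W)$ — more precisely one checks that periodic points of least period $n$ in $\hat W$ correspond to periodic points of least period $np$ in $W$ (a point of period $n$ under $\sigma^p$ has period dividing $np$ under $\sigma$; because of the cyclic structure its $\sigma$-period is exactly $np$ unless it has smaller $\sigma^p$-period). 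The same correspondence holds for $\hat Z$ versus $Z$. Thus (1) gives $h(\hat Z) = p\,h(Z) < p\,h(W) = h(\hat W)$ and (3) gives $q_n(\hat Z) \le q_n(\hat W)$ for all $n$. By Theorem~\ref{Krieger} there is an embedding $\hat\psi : \hat Z \hookrightarrow \hat W$. Finally I would reassemble: $\hat\psi$ induces a $\sigma$-equivariant injection $Z_0 \to W_0$, and using the shift to transport $Z_i$ to $W_i$ (send $z \in Z_i$, $z = \sigma^i z'$ with $z' \in Z_0$, to $\sigma^i \hat\psi(z')$) one obtains a continuous, shift-commuting injection $Z \to W$; continuity at the ``seams'' between the $Z_i$ follows because the $Z_i$ are clopen, and injectivity is inherited from $\hat\psi$ together with the disjointness of the target pieces $W_i$. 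A continuous shift-commuting injection between subshifts is an embedding (a sliding block code), so this is the desired $\psi$.

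The main obstacle I anticipate is the bookkeeping around the recoding and the periodic-point count: one must make the correspondence between $\sigma^p$-periodic points on $Z_0$ (resp.\ $W_0$) and $\sigma$-periodic points on $Z$ (resp.\ $W$) precise, including the ``least period'' matching, and confirm that the higher-block recoding genuinely turns $Z_0$ under $\sigma^p$ into a subshift over a finite alphabet in a way compatible on both $Z$ and $W$ simultaneously. A secondary subtlety is the boundary case $h(Z) = h(W)$: if $Z \hookrightarrow W$ with equal entropy then the image is all of $W$ (a proper subshift would have strictly smaller entropy), so $Z$ is conjugate to $W$, which is why condition (1) is a strict inequality in the ``or'' clause and the conjugacy alternative is genuinely needed. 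One should also remark that $\hat Z$ need not be irreducible or an SFT — but Krieger's theorem only requires the \emph{target} to be a mixing SFT, which is exactly the role played by $\hat W$, so no extra hypotheses on $Z$ are needed beyond (1)--(3).
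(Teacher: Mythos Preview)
Your proposal is correct and follows essentially the same route as the paper: pull back the canonical cyclic partition of $W$ to get $p$-periodicity of $Z$ in the necessity direction, and in the sufficiency direction apply Krieger's theorem to the mixing SFT $(W_0,\sigma^p)$ with source $(Z_0,\sigma^p)$, then reassemble via $z\in Z_i \mapsto \sigma^i\circ\iota\circ\sigma^{-i}(z)$. The paper is slightly terser about the recoding and the $q_n$-bookkeeping you flag as obstacles (it phrases the latter in terms of orbit counts rather than $q_n$), but the argument is the same.
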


\begin{proof} (Sufficiency:) If $Z$ is conjugate to $W$ then $Z$ trivially embeds into $W$. Assume $Z$ is not conjugate to $W$.  Let $W = \sqcup_{i=0}^{p-1} W_i$ be the canonical cyclic partition of $W$. By assumption (2) there is also a cyclic partition $Z = \sqcup_{i=0}^{p-1} Z_i$ of $Z$. The topological entropy of $(Z_0,\sigma^p)$  is equal to $p$ times the topological entropy of $(Z,\sigma)$, and the topological entropy of $(W_0,\sigma^p)$ is equal to $p$ times the topological entropy of $(W,\sigma)$. It follows therefore from (1) that $h(Z_0,\sigma^p) < h(W_0,\sigma^p)$. The number of $\sigma^p$-orbits of size $n$ in $Z_0$ is equal to the number of $\sigma$-orbits in $Z$ of size $pn$, and similarly for $(W_0,\sigma^p)$ and $(W,\sigma)$. Since $(W_0,\sigma^p)$ is a mixing SFT, it follows therefore now from (3) that we can use Krieger's embedding theorem, Theorem \ref{Krieger}, to get an embedding of $(Z_0,\sigma^p)$ into $(W_0,\sigma^p)$; that is, there is an injective map $\iota : Z_0 \to W_0$ such that $\iota \circ \sigma^p = \sigma^p \circ \iota$. Define $j : Z \to W$ such that
$$
j(z) := \sigma^i \circ \iota \circ \sigma^{-i}(z), \ z \in Z_i,
$$
for $i \in \{0,1,\cdots , p-1\}$. Then $j : (Z,\sigma) \to (W,\sigma)$ is an embedding of $Z$ into $W$.

(Necessity:) We now prove that if $Z$ is not conjugate to $W$ and $Z$ embeds into $W$, then conditions (1) (2) (3) hold. The proof that conditions (1) and (3) are necessary is identical to the well-known argument in the mixing case, see for example, \cite[Page 338-339]{LM}. Now let us check that (2) holds. Let $j: (Z, \sigma)\to (W,\sigma)$ be an embedding and let  $W = \sqcup_{i=0}^{p-1} W_i$ be the canonical cyclic partition of $W$ and define
$$
Z_i:= j^{-1}(j(Z)\cap W_i)
$$
for each $i$. Clearly $Z= \sqcup_{i=0}^{p-1} Z_i$. Moreover, the $Z_i$'s are clopen since the $W_i$'s are. Then, since
$$
\sigma(Z_i)=\sigma(j^{-1} (j(Z)\cap W_i)) = j^{-1} (\sigma(j(Z))\cap \sigma(W_i)) = j^{-1} (j(Z)\cap W_{i+1})
$$
under addition modulo $p$, we have $\sigma(Z_i)= Z_{i+1}$ (modulo $p$) and therefore (2) holds.
\end{proof}

\begin{remark} It follows from \cite[Theorem 10.1.1]{LM} that when $Z$ and $W$ are both irreducible SFTs and $h(Z) < h(W)$, there will be an embedding of $Z$ into $W$ when
\begin{equation}\label{15-06-25b}
q_n(Z) \leq q_n(W)
\end{equation}
for all $n \in \mathbb N$. This follows also from  Proposition \ref{prop:irreducible_emb} since condition \eqref{15-06-25b} implies that the global period of $Z$ will be a multiple of $p$ so that $Z$ will be $p$-periodic.
\end{remark}

Now we arrive at the generalization of MacDonald's Theorem to irreducible sofic targets, which is a main result of this section.

\begin{thm}\label{Sophie2} Let $\pi : X \to Y$ be a factor code from an irreducible SFT $X$ of global period $p$ to a sofic shift $Y$. Let $Z$ be a subshift such that $h(Z) < h(Y)$. Then, there is an embedding from $Z$ into $Y$ which factors through $\pi$ if and only if $Z$ is $p$-periodic and $q_{np}(Z) \leq r_{np}(\pi)$ for all $n \in \mathbb N$.
\end{thm}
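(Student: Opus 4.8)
The plan is to prove necessity and sufficiency separately. In both directions the case analysis reduces to Proposition~\ref{prop:irreducible_emb} applied to a suitable irreducible SFT of global period $p$ inside $X$, together with an elementary analysis of how periodic points behave under an embedding; for sufficiency the SFT in question is the one manufactured by Theorem~\ref{08-05-24g}.

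For necessity, suppose $\phi:Z\to Y$ is an embedding factoring through $\pi$, so that there is a sliding block code $\psi:Z\to X$ with $\phi=\pi\circ\psi$, and $\psi$ is automatically an embedding. Since $\pi$ is surjective, $h(X)\ge h(Y)>h(Z)$, so $Z$ is not conjugate to $X$; as $Z$ embeds into the irreducible SFT $X$ of global period $p$, the necessity part of Proposition~\ref{prop:irreducible_emb} gives that $Z$ is $p$-periodic. For the periodic point count, fix $n$ and let $z\in Z$ have least period exactly $np$. Because $\psi$ commutes with the shift, $\sigma^{np}\psi(z)=\psi(z)$, so the least period $m$ of $\psi(z)$ divides $np$; but $\sigma^{m}\psi(z)=\psi(z)$ forces $\psi(\sigma^{m}z)=\psi(z)$, hence $\sigma^{m}z=z$ by injectivity of $\psi$, hence $np\mid m$ and so $m=np$. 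The same argument applied to $\phi$ shows $\phi(z)=\pi(\psi(z))$ has least period exactly $np$ in $Y$, so $\phi(z)\in Q_{np}(Y)\cap\pi(Q_{np}(X))$. Injectivity of $\phi$ then makes $z\mapsto\phi(z)$ an injection of $Q_{np}(Z)$ into this set, giving $q_{np}(Z)\le r_{np}(\pi)$.

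For sufficiency, assume $Z$ is $p$-periodic and $q_{np}(Z)\le r_{np}(\pi)$ for all $n$. Since $0\le h(Z)<h(Y)$ we have $h(Y)>0$, so Theorem~\ref{08-05-24g} is available; the goal is to produce an irreducible SFT $W\subseteq X$ of global period $p$ with $\pi|_{W}$ injective and $q_{np}(Z)\le q_{np}(W)$ for all $n$, and then embed $Z$ into $W$ via Proposition~\ref{prop:irreducible_emb}. First pick $\epsilon>0$ with $h(Z)<h(Y)-\epsilon$ and let $N_\epsilon$ be as in Theorem~\ref{08-05-24g}. Using $\#\{x:\sigma^{N}x=x\}\le\#\mathcal{B}_N(Z)$ and $\tfrac1N\log\#\mathcal{B}_N(Z)\to h(Z)$, choose $N_1$ so that $q_{np}(Z)\le e^{np(h(Y)-\epsilon)}$ for all $n\ge N_1$. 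Set $M:=\max(N_\epsilon,N_1)$ and let $W$ be the irreducible SFT given by Theorem~\ref{08-05-24g} for this $\epsilon$ and $M$. Then for $n\le M$ we have $q_{np}(Z)\le r_{np}(\pi)\le q_{np}(W)$ by hypothesis and item (3), and for $n>M\ge\max(N_\epsilon,N_1)$ we have $q_{np}(Z)\le e^{np(h(Y)-\epsilon)}\le q_{np}(W)$ by item (2); moreover $h(W)\ge h(Y)-\epsilon>h(Z)$. Hence $Z$ is not conjugate to $W$ and conditions (1)--(3) of Proposition~\ref{prop:irreducible_emb} hold, so there is an embedding $\psi:Z\to W\subseteq X$. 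Finally $\phi:=\pi\circ\psi:Z\to Y$ is a composition of $\psi$ with the injective map $\pi|_{W}$, hence an embedding, and it factors through $\pi$ by construction; this also recovers MacDonald's Theorem when $X$ is mixing, since then $p=1$.

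The argument is largely an assembly of results already in hand, so there is no single deep step; the places needing care are the bookkeeping in the sufficiency proof---choosing $\epsilon$, then $N_1$, then $M$ in the correct order so the ranges $n\le M$ and $n>M$ dovetail---and the easy-to-overlook observation that injectivity of $\psi$ and $\phi$ preserves least periods \emph{exactly}, which is precisely what upgrades the crude ``period divides'' bound into the sharp inequality $q_{np}(Z)\le r_{np}(\pi)$.
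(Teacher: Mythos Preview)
Your proof is correct and follows essentially the same approach as the paper's: both directions rest on Proposition~\ref{prop:irreducible_emb}, with sufficiency obtained by applying Theorem~\ref{08-05-24g} with $M=\max(N_\epsilon,N_1)$ to produce an irreducible SFT $W\subseteq X$ on which $\pi$ is injective and into which $Z$ embeds. Your write-up is in places more explicit than the paper's (e.g.\ the verification that $\psi$ and $\phi$ preserve least periods exactly, and that $h(Y)>0$ so Theorem~\ref{08-05-24g} applies); the only step you leave implicit is the one-line derivation of $h(W)\ge h(Y)-\epsilon$ from item~(2) via $h(W)=\lim_n\frac{1}{np}\log q_{np}(W)$.
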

\begin{proof} We first explain the necessity of the conditions. Assume that   there are embeddings $\phi : Z \to Y$ and $\psi : Z \to X$ such that $\phi=\pi \circ \psi$.  Then by the necessity part of Krieger's embedding theorem for irreducible SFT targets, Proposition \ref{prop:irreducible_emb}, $Z$ must be $p$-periodic. The bounds $q_{np}(Z) \leq r_{np}(\pi)$ for $n \in \mathbb N$ follow immediately from the fact that for every $n \in\N$,  $\phi= \pi \circ \psi$ induces an injective function from $Q_{np}(Z)$ into $\pi(Q_{np}(X)) \cap Q_{np}(Y)$ .

  For sufficiency of the two conditions, assume that $Z$ is $p$-periodic and that $q_{np}(Z)\leq r_{np}(\pi)$ for all $n\in \mathbb N$. Set $\epsilon := \frac{1}{2}(h(Y)-h(Z))$. Let $X= \sqcup_{i=0}^{p-1} X_i$ be the canonical cyclic partition of $X$. Since $\limsup\limits_{n\to \infty} \frac{1}{np} \log (q_{np}(Z)) \leq h(Z) < h(Y)- \epsilon$, there is an $N_1 \in \mathbb N$ such that
\begin{equation}\label{22-05-24}
	q_{np}(Z) \leq e^{np(h(Y)- \epsilon)}
\end{equation}
for all $n \geq N_1$. It follows from Theorem \ref{08-05-24g} that there exist an $N_\epsilon\in \mathbb N$ and an irreducible SFT $W \subseteq X$ of global period $p$ such that
\begin{itemize}
	\item[(i)] $\pi : W \to Y$ is injective,
	\item[(ii)] $q_{np}(W) \geq e^{np(h(Y)-\epsilon)}, \ n \geq N_\epsilon$,
	\item[(iii)] $q_{np}(W) \geq r_{np}(\pi), \ n \leq \max \{N_\epsilon, N_1\}$.
\end{itemize}
Since $W$ is an irreducible SFT of global period $p$, $$
h(W) = \lim_{n \to \infty} \frac{1}{np} \log q_{np}(W) ,
$$
and hence (ii) implies that $h(W) \geq h(Y) - \epsilon > h(Z)$. By combining (ii) and \eqref{22-05-24} we find that $q_{np}(W) \geq q_{np}(Z)$ for all $n \geq \max \{N_\epsilon, N_1\}$. For $n \leq \max\{N_\epsilon, N_1\}$, it follows from the assumption and (iii) that
\begin{equation}\label{22-05-24a}
	q_{np}(W) \geq r_{np}(\pi) \geq q_{np}(Z) .
\end{equation}
We can then apply Proposition \ref{prop:irreducible_emb} to obtain an embedding $\iota : Z \to W$, and then $\pi \circ \iota : Z \to Y$ is an embedding of $Z$ into $Y$ which factors through $\pi$.

\end{proof}

The case $p=1$ of Theorem \ref{Sophie2} is Macdonald's theorem. In the case where $X=Y$ and $\pi$ is the identity map, Theorem \ref{Sophie2} gives back Proposition \ref{prop:irreducible_emb}.


The following result refines Theorem~\ref{Sophie2} by giving a complete characterization of the subshifts $Z$ that admit an embedding that factors through a given factor code $\pi:X \to Y$ on an irreducible SFT, by dealing with the remaining case  that $h(Z)=h(Y)$. The refinement involves the notion of a \emph{retraction} between subshifts. The classical notion of a retraction from topology was  introduced into symbolic dynamics independently in \cite{Meye} and \cite{PS}:
\begin{defn}\label{def:retract}
Let $X$ be a subshift. A sliding block code $r:X \to X$ is called a \emph{retraction} if $r\circ r=r$.
A subshift $Y \subseteq X$ is called a \emph{retract} of $X$ if there exists a retraction $r:X \to X$ such that $Y=r(X)$. Note that in this case $r\mid_Y = \id_Y$.
\end{defn}

\begin{thm}\label{Sophie3} Let $\pi : X \to Y$ be a factor code from an irreducible SFT $X$ of global period $p$ onto a sofic shift $Y$, and let $Z$ be an arbitrary subshift. The following conditions (1)-(3) are equivalent:
\begin{itemize}
\item[(1)] There is an embedding from $Z$ into $Y$ which factors through $\pi$.
\item[(2)] \begin{itemize}
    \item[(a)] There is a sliding block code $\rho : Y \to X$ such that $\pi \circ \rho = \id_Y$ and $Z$ is conjugate to $Y$, or
    \item[(b)] $Z$ is $p$-periodic, $h(Z) < h(Y)$ and $q_{np}(Z) \leq r_{np}(\pi)$ for all $n \in \mathbb N$.
    \end{itemize}
\item[(3)]\begin{itemize}
    \item[(a')] $Z$ and $Y$ are conjugate SFTs, and there is an embedding $\rho : Y \to X$ such that $\rho \circ \pi$ is a retraction, or
    \item[(b)] $Z$ is $p$-periodic, $h(Z) < h(Y)$ and $q_{np}(Z) \leq r_{np}(\pi)$ for all $n \in \mathbb N$.
    \end{itemize}    
\end{itemize}
\end{thm}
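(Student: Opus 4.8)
The plan is to reduce everything to Theorem~\ref{Sophie2} together with three elementary facts: (i) an entropy-preserving embedding into an irreducible sofic shift is automatically a conjugacy; (ii) for a sliding block code $\rho:Y\to X$, the identity $\pi\circ\rho=\id_Y$ is equivalent to "$\rho$ is injective and $\rho\circ\pi$ is a retraction"; and (iii) a retract of an SFT is an SFT. First I would note that clauses (2)(b) and (3)(b) are literally identical, so it suffices to prove (1)$\Leftrightarrow$(2) and (2)$\Leftrightarrow$(3); and that each of (1), (2), (3) forces $h(Z)\le h(Y)$ (in (1) because $Z$ embeds in $Y$; in (2) and (3) because the (a)-clauses give $Z\cong Y$ and the (b)-clauses are explicit). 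Hence I may argue separately in the two cases $h(Z)<h(Y)$ and $h(Z)=h(Y)$.

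When $h(Z)<h(Y)$, clauses (2)(a) and (3)(a') cannot hold (they entail $h(Z)=h(Y)$), so the only surviving options are (2)(b) and (3)(b), which coincide, and their equivalence with (1) is exactly the content of Theorem~\ref{Sophie2}. So the whole issue is the case $h(Z)=h(Y)$, where I must show that (1), (2)(a), and (3)(a') are equivalent. For fact (i): if $\phi:Z\to Y$ is an embedding with $h(Z)=h(Y)$, then $\phi(Z)$ is a subshift of $Y$ of full entropy; pulling back along the right Fischer cover $\mathcal L_\infty:X_G\to Y$, which is right-resolving, hence finite-to-one, hence entropy preserving, $\mathcal L_\infty^{-1}(\phi(Z))$ is a subshift of the irreducible SFT $X_G$ of entropy $h(X_G)$, so it equals $X_G$ because a proper subshift of an irreducible SFT has strictly smaller entropy (as used in the introduction); therefore $\phi(Z)=Y$, and a bijective sliding block code is a conjugacy.

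Granting this, the equivalence (1)$\Leftrightarrow$(2) in the case $h(Z)=h(Y)$ is immediate. If (1) holds, say $\phi=\pi\circ\psi$ with $\phi,\psi$ embeddings, then $\phi$ is a conjugacy and $\rho:=\psi\circ\phi^{-1}:Y\to X$ is a sliding block code with $\pi\circ\rho=\pi\circ\psi\circ\phi^{-1}=\id_Y$, while $Z\cong Y$; this is (2)(a). Conversely, given (2)(a) with $\rho$ a section of $\pi$ and $c:Z\to Y$ a conjugacy, $c$ is an embedding of $Z$ into $Y$ and $\pi\circ(\rho\circ c)=c$, so $\rho\circ c$ witnesses (1). (Note $\pi\circ\rho=\id_Y$ automatically forces $\rho$ injective.)

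Finally, for (2)(a)$\Leftrightarrow$(3)(a'): fact (ii) is a cancellation argument — $\pi\circ\rho=\id_Y$ makes $\rho\circ\pi$ idempotent, since $(\rho\pi)(\rho\pi)=\rho(\pi\rho)\pi=\rho\pi$; conversely, from $(\rho\pi)(\rho\pi)=\rho\pi$ one cancels the injective $\rho$ on the left (getting $\pi\rho\pi=\pi$) and then the surjective $\pi$ on the right (getting $\pi\rho=\id_Y$). The one substantive extra point is that in situation (2)(a) the shifts $Y$ and $Z$ are automatically SFTs: $\rho(Y)=(\rho\circ\pi)(X)$ is a retract of the SFT $X$, and fact (iii) is immediate, since the image of an idempotent sliding block code $r$ with block map $\Phi$ and window $[i-m,i+m]$ equals $\{x\in X:\Phi(x_{[i-m,i+m]})=x_i\ \forall i\}$, obtained from $X$ by forbidding finitely many words; hence $\rho(Y)$, so $Y\cong\rho(Y)$, so $Z\cong Y$, are SFTs, giving (3)(a'). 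Conversely, (3)(a') yields (2)(a) using only fact (ii) and $Z\cong Y$. I do not expect a genuine obstacle here: the hard work is already carried out in Theorem~\ref{Sophie2}, and facts (i)--(iii) are routine, with (iii) and the notion of retraction being borrowed from \cite{Meye,PS}.
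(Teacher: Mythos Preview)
Your proof is correct and follows essentially the same approach as the paper: both reduce the $h(Z)<h(Y)$ case to Theorem~\ref{Sophie2}, handle $h(Z)=h(Y)$ via the construction $\rho:=\psi\circ\phi^{-1}$, and establish (a)$\Leftrightarrow$(a') by the same cancellation argument together with the fact that a retract of an SFT is an SFT. The paper cites \cite{Meye,PS} for your fact~(iii) and \cite[Corollary~4.4.9]{LM} for your fact~(i), whereas you supply short inline proofs; beyond this and a minor organizational difference (you split by entropy first, the paper proves (a)$\Leftrightarrow$(a') first), the arguments coincide.
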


\begin{proof} We first prove that (a) $\Leftrightarrow$ (a'): Assume (a). Then $r:= \rho \circ \pi : X \to X$ is a retraction. As proved in  \cite[Proposition 4.10]{Meye} and also in \cite[Lemma 4.3]{PS}, any  retract of an SFT is also an  SFT. Thus $r(X)$ is an SFT. Since $\rho$ is a conjugacy of $Y$ onto $r(X)$, (a') follows.  Assume (a'). Then $\rho \circ \pi \circ \rho \circ \pi = \rho \circ \pi$. Since $\pi$ is surjective and $\rho$ injective, it follows that $\pi \circ \rho = \id_Y$. Hence (a) holds.

We have shown that (2) $\Leftrightarrow$ (3) and therefore it suffices now to show that (1) $\Leftrightarrow$ (2). Assume first that (1) holds, i.e. there are embeddings $\phi : Z \to Y$ and $\psi : Z \to X$ such that \eqref{20-03-23cx} commutes. The existence of $\psi$ shows that $Z$ must be $p$-periodic. If $\phi$ is not surjective it follows from Corollary 4.4.9 of \cite{LM} that $h(Z) < h(Y)$ and then Theorem \ref{Sophie2} applies to give (b). Assume therefore that $\phi(Z) = Y$. Then $\phi$ is a conjugacy. Set $\rho := \psi \circ \phi^{-1} : Y \to X$ and note that $\pi \circ \rho = \id_Y$. It follows that (a) holds. 

For the converse, assume first that (a) holds. Let $\phi : Z \to Y$ be a conjugacy. Set $\psi:= \rho \circ\phi$ and note that \eqref{20-03-23cx} commutes. When we instead assume that (b) holds we get the desired embedding from Theorem \ref{Sophie2}.
\end{proof}

\begin{remark}\label{29-07-25} The conditions given in (a) and (a') are somewhat abstract and motivates the following question:  Given a factor map $\pi:X \to Y$ between irreducible SFTs $X$ and $Y$, when is there a sliding block code $\rho : Y \to X$ such that $\pi \circ \rho = \id_Y$, or equivalently, when does there exist an embedding $\rho:Y \to X$ (in terms of more  explicit, hopefully  checkable conditions) such that $\rho \circ \pi:X \to X$ is a retraction? A necessary condition is that every periodic point in $Y$ has a preimage which is a periodic point with the same period.
\end{remark}

\section{Factorizable embeddings}\label{facemb}

In Section \ref{Sophie5} we considered an irreducible sofic shift $Y$ and a given cover $\pi : X \to Y$ of $Y$ in order to decide when a given subshift $Z$ can embed into $Y$ via $\pi$. The answer was obtained in Theorem \ref{Sophie3}, and we consider therefore now a different but closely related problem. Given only $Y$ and $Z$, when can we find an embedding of $Z$ into $Y$ which factors through \emph{some} sliding block code $\pi : X \to Y$ out of an irreducible SFT X? And if we can, what properties can we ask of $\pi$?

We start with the following definition.
\begin{defn} \label{factorizable_def}
	Let $Z$ and $Y$ be subshifts. An embedding $\phi : Z \to Y$ is said to be \emph{factorizable} when $\phi$ factors through (see Definition \ref{factorthrough}) some irreducible SFT. That is, when there exist an irreducible SFT $S$ and a sliding block code $\pi: S\to Y$ such that
	the following diagram commutes for some embedding $\psi : Z \to S$:
	\begin{equation}\label{20-03-23c}
		\xymatrix{
			& S \ar[d]^-{\pi}  \\
			Z \ar[ur]^-\psi \ar[r]_-\phi & Y}
	\end{equation}
	Moreover, we say that $\phi$ is \emph{I-factorizable} when $\pi$ can be chosen to be injective (and hence an embedding), and that $\phi$ is \emph{S-factorizable} when $\pi$ can be chosen to be surjective (and hence a cover of $Y$).
\end{defn}

We first have the following observation regarding the proof of Theorem \ref{Sophie2}.

\begin{remark}\label{18-06-24e} The embedding $Z \to Y$ constructed in the proof of Theorem \ref{Sophie2} is $I$-factorizable (through the SFT $W$ therein) by construction, giving us the following implication in the setting of Theorem \ref{Sophie2}: If there is an embedding of $Z$ into $Y$ which factors through a factor code $\pi$, then there is also an $I$-factorizable embedding of $Z$ into $Y$. It follows that for an arbitrary subshift $Z$ and an arbitrary irreducible sofic shift $Y$ such that $h(Z) < h(Y)$, the implication (i) $\Rightarrow$ (ii) holds, where
	\begin{itemize}
		\item[(i)] There is an $S$-factorizable embedding $Z \to Y$.
		\item[(ii)]  There is an $I$-factorizable embedding $Z \to Y$.
	\end{itemize}
\end{remark}

\subsection{Receptive periodic points} \label{25-01-04-b}

Let $\mathcal G =(G,\mathcal L)$ be a right-resolving labeled graph.
A word $m \in \mathcal{B}(X_{\mathcal G})$ is \emph{magic} for $\mathcal G$ if all paths in $G$ labeled $m$ have the same terminal vertex
\footnote{Note on terminology: In \cite[Chapter 3]{LM}, such a word is called ``a synchronizing word for $\mathcal{G}$". In order to reduce confusion, we use the term ``magic" which comes from considerations in \cite[Chapter 9]{LM}.}.

\begin{lemma}\label{19-03-23d} Let $\mathcal G = (G, \mathcal{L})$ be the right Fischer cover of an irreducible sofic shift $Y$.  Then a word $u \in \mathcal{B}(Y)$ is synchronizing for $Y$ if and only if it is magic for $\mathcal G$.
\end{lemma}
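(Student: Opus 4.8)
The plan is to prove both implications directly from the characterizations of the right Fischer cover and of synchronizing/magic words. Throughout, let $\mathcal G = (G,\mathcal L)$ be the right Fischer cover of $Y$, so that $\mathcal G$ is irreducible, right-resolving and follower-separated, and $\mathcal L_\infty$ is almost invertible.

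First I would show that if $u$ is magic for $\mathcal G$, then $u$ is synchronizing for $Y$. Suppose $au, ub \in \mathcal B(Y)$. Pick a path $\gamma$ in $G$ labeled $au$; write $\gamma = \gamma_a \gamma_u$ where $\gamma_a$ is labeled $a$ and $\gamma_u$ is labeled $u$, and let $t$ be the terminal vertex of $\gamma_u$. Since $u$ is magic, every path labeled $u$ ends at $t$. Now pick any path $\delta$ labeled $ub$; write $\delta = \delta_u \delta_b$ with $\delta_u$ labeled $u$ and $\delta_b$ labeled $b$. By the magic property, $\delta_u$ also ends at $t$, which is the initial vertex of $\delta_b$. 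Hence $\gamma_a \gamma_u \delta_b$ is a legitimate path in $G$ labeled $aub$, so $aub \in \mathcal B(Y)$. This shows $u$ is synchronizing for $Y$. (Note this direction did not even use follower-separation, only right-resolvingness implicitly through the path structure — actually not even that; it is pure graph concatenation.)

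The harder direction is the converse: if $u \in \mathcal B(Y)$ is synchronizing for $Y$, then $u$ is magic for $\mathcal G$. Suppose, for contradiction, that there are two paths $p_1, p_2$ in $G$ labeled $u$ with distinct terminal vertices $v_1 \ne v_2$. The key step is to use follower-separation together with synchronizing-ness to derive that $v_1$ and $v_2$ have the same follower set, contradicting follower-separation. To this end, take any word $b$ in the follower set of $v_1$; I want to show $b$ lies in the follower set of $v_2$. Since $G$ is irreducible, there is a path into the initial vertex $i_G(p_1)$, giving a word $a$ with $a u \in \mathcal B(Y)$ realized by a path into $i_G(p_1)$; and $u b \in \mathcal B(Y)$ since $b$ is in the follower set of $v_1 = t_G(p_1)$. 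Synchronizing-ness of $u$ gives $aub \in \mathcal B(Y)$. The main obstacle — and the step requiring care — is to then argue that $aub$ is realized by a path whose middle segment labeled $u$ is forced to be $p_2$ (or at least ends at $v_2$), so that $b$ becomes a follower of $v_2$. This is exactly where right-resolvingness and the choice of $a$ must be leveraged: one should choose $a$ long enough (or use that, in the right-resolving presentation, the terminal vertex of a path is determined by its starting vertex and label) to guarantee that the unique path labeled $a$ from the relevant vertex forces its continuation labeled $u$ to be $p_2$. Concretely, by irreducibility pick a path from $v_2$ back to $i_G(p_2)$; prepending its label to $u$ and using right-resolvingness, any path labeled (that word)$\cdot u \cdot b$ starting appropriately must pass through $v_2$, and the synchronizing property supplies such a path. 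Running this symmetrically shows the follower sets of $v_1$ and $v_2$ coincide, contradicting follower-separation. Hence all paths labeled $u$ share a terminal vertex, i.e. $u$ is magic for $\mathcal G$.

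I would present the argument so that the synchronizing $\Rightarrow$ magic direction is organized as: (i) fix two $u$-labeled paths with differing terminal vertices; (ii) show each follower of one terminal vertex is a follower of the other, using irreducibility to manufacture left-extensions, synchronizing-ness to glue, and right-resolvingness to pin down the terminal vertex of the resulting path; (iii) invoke follower-separation of the Fischer cover for the contradiction. The one genuinely delicate point to get right is step (ii)'s pinning-down: making sure the left-extension $a$ is chosen so that in the right-resolving graph the word $a$ already determines a unique vertex from which $u$ then leads (uniquely) to the prescribed terminal vertex; this is a short but essential use of the defining properties of the Fischer cover and is where I expect to spend the most care.
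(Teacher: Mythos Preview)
Your magic $\Rightarrow$ synchronizing argument is correct and is exactly what the paper leaves to the reader.

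For synchronizing $\Rightarrow$ magic, your overall strategy --- show that the terminal vertices $v_1,v_2$ of two $u$-labeled paths have equal follower sets, contradicting follower-separation --- is viable, but your concrete implementation of the ``pinning-down'' step has a real gap. If $a$ is merely the label of \emph{some} path ending at $i_G(p_2)$, then $aub\in\mathcal B(Y)$ only guarantees \emph{some} path labeled $aub$; its $a$-segment need not end at $i_G(p_2)$, so its $u$-segment need not be $p_2$, and you cannot conclude that $b$ follows $v_2$. Right-resolvingness pins down terminal vertices only once the initial vertex is fixed, and your phrase ``starting appropriately'' is precisely the hypothesis you have not secured. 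The repair is to choose $a$ itself to be magic with terminal vertex $i_G(p_2)$: take any magic word $m$ for $\mathcal G$ (one exists by \cite[Proposition 3.3.16]{LM}), use irreducibility to append a path from the magic terminus to $i_G(p_2)$, and let $a$ be the resulting label. Then every path labeled $a$ ends at $i_G(p_2)$, right-resolvingness forces the ensuing $u$-segment to be $p_2$, and your argument goes through.

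The paper takes a different tack for this direction. Rather than proving the two follower sets coincide, it exploits their \emph{in}equality directly: from follower-separation it selects a label $\mathcal L(l)$ that follows one terminal vertex but not the other, extends $l$ to $l'$ so that $\mathcal L(l')$ is magic and terminates at the initial vertex of the second $u$-path, and then exhibits the explicit failure $\mathcal L(l')u,\ u\mathcal L(l')\in\mathcal B(Y)$ while $\mathcal L(l')u\mathcal L(l')\notin\mathcal B(Y)$. Both routes ultimately hinge on the same ingredient you glossed over --- a magic prefix to force the terminal vertex --- so once you insert that, either argument is fine; the paper's is slightly shorter because it produces the contradiction in one stroke rather than via a symmetric follower-set comparison.
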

\begin{proof} This follows from Exercise 3.3.3 of \cite{LM}, but we sketch a proof for the benefit of the reader. Recall that the right Fischer cover $\mathcal G = (G,\mathcal L)$  of an irreducible sofic shift $Y = X_{\mathcal G}$ is the unique, up to labeled graph isomorphism, irreducible, right-resolving, follower-separated presentation of $Y$.

 Assume that $w \in \mathcal B(X_\mathcal G)$ is synchronizing for $X_\mathcal G$. Consider two paths $u,v$ in $G$ with label $w$ and assume for a contradiction that they do not terminate at the same vertex. Since the right Fischer cover is follower-separated,
it follows that there is a path $l$ in $G$ starting at the terminal vertex of $u$, but no path starting at the terminal vertex of $v$ has the same label $\mathcal L(l)$ as $l$. (If this is not the case, it must be true with $u$ and $v$ interchanged and the following reasoning would just work as well.) It follows from \cite[Proposition 3.3.16]{LM} that there is a magic word for $\mathcal G$. Since $G$ is irreducible and the labeling is right-resolving, it follows that we can extend $l$ to the right to obtain a path $l'$ in $G$ which terminates at the initial vertex of $v$ and its label $\mathcal L(l')$ is magic for $\mathcal G$. Then $\mathcal L(l')w = \mathcal L(l'v) \in \mathcal B(X_\mathcal G)$, and $w\mathcal L(l') = \mathcal L(ul') \in \mathcal B(X_\mathcal G)$, but $\mathcal L(l')w \mathcal L(l') \notin \mathcal B(X_\mathcal G)$, since all paths labeled $\mathcal L(l')w$ must terminate at the vertex where $v$ terminates, but there is no path labeled $\mathcal L(l')$ which starts at that same vertex. This contradicts that $w$ is synchronizing for $X_\mathcal G$.

The converse, that magic implies synchronizing, is easy and left to the reader.
\end{proof}


\begin{defn} \label{receptive_def}
	A periodic point $x \in X$ of least period $p$ is \emph{receptive} (for $X$) when there are synchronizing words $m_1,m_2 \in \mathcal B(X)$ such that $m_1(x_{[0,p)})^km_2 \in \mathcal B(X)$ for all $k \in \mathbb N \backslash \{0\}$.
\end{defn}	
	This notion and the name were introduced by Mike Boyle in \cite{B}. We denote by $\Rec_n(X)$ the set of receptive periodic points of least period $n$, and set
$$
\rec_n(X) := \# \Rec_n(X).  
$$

{Below are a few easy observations about receptive periodic points. 

\begin{prop} \label{receptive_other_characterization}
\label{easy}
Let $Y$ be an irreducible sofic shift. 
\begin{enumerate}
 \item 
A periodic point $y$ of least period $p$ is receptive iff for every positive integer $N$ there are synchronizing words $u,v$ in ${\mathcal B}(X)$ such that for all $k \ge N$ $u(y_{[0,p-1]})^kv \in {\mathcal B}(Y)$;
\item 
If a periodic point $y \in Y$ contains a synchronizing word in $Y$, then $y$ is receptive;
\item 
If $Y$ is an SFT, then all periodic points are receptive.
\end{enumerate}
\end{prop}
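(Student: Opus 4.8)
The plan is to derive all three statements directly from the definition of receptivity (Definition~\ref{receptive_def}) together with two elementary facts already recorded in Section~\ref{notations}: that any word containing a synchronizing word is itself synchronizing, and that a shift space is an SFT exactly when every sufficiently long word in its language is synchronizing (\cite[Theorem 2.1.8]{LM}). Accordingly I expect none of the three parts to present a genuine obstacle; the only point that needs a little care is checking that the words produced as witnesses in (2) and (3) really are synchronizing, which is precisely what the ``superword of a synchronizing word'' observation is for.

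For (1), the forward implication is immediate: if $m_1,m_2\in\mathcal B(Y)$ are synchronizing words witnessing that $y$ is receptive, then $m_1(y_{[0,p-1]})^k m_2\in\mathcal B(Y)$ for every $k\ge 1$, so for any $N$ the same pair $u:=m_1$, $v:=m_2$ satisfies the required property for all $k\ge N$. Conversely, applying the stated condition with $N=1$ yields synchronizing words $u,v$ with $u(y_{[0,p-1]})^k v\in\mathcal B(Y)$ for all $k\ge 1$, which is exactly the defining condition of receptivity. So here there is nothing beyond unwinding the definition.

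For (2), let $y$ have least period $p$ and suppose it contains a synchronizing word $w$. Since $y=(y_{[0,p-1]})^{\infty}$, the word $w$ occurs as a subword of $(y_{[0,p-1]})^M$ once $M$ is large enough (taking $M\ge|w|$ certainly suffices), so $(y_{[0,p-1]})^M$, being a superword of the synchronizing word $w$, is itself synchronizing. Setting $m_1=m_2=(y_{[0,p-1]})^M$, for every $k\ge 1$ we have $m_1(y_{[0,p-1]})^k m_2=(y_{[0,p-1]})^{2M+k}$, which is a subword of $y$ and hence lies in $\mathcal B(Y)$; thus $m_1,m_2$ witness that $y$ is receptive. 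Finally, for (3), if $Y$ is an SFT there is a length $L$ such that every word in $\mathcal B(Y)$ of length at least $L$ is synchronizing; then for any periodic point $y$ of least period $p$ and any $M$ with $Mp\ge L$, the subword $(y_{[0,p-1]})^M$ of $y$ has length $\ge L$ and is therefore synchronizing, so $y$ contains a synchronizing word and part (2) applies (or one simply repeats the computation of (2) directly).
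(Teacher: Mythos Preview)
Your proof is correct and follows essentially the same approach as the paper's. The only minor difference is in the ``if'' direction of (1): you instantiate the hypothesis at $N=1$ directly, whereas the paper fixes an arbitrary $N$ with its associated $u,v$ and absorbs $(y_{[0,p-1]})^N$ into $u$ to obtain a new synchronizing word $u'=u(y_{[0,p-1]})^N$ that works for all $k\ge1$; this incidentally shows that the condition for a single $N$ already suffices, but for the statement as written your shortcut is perfectly valid.
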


\begin{proof}

(1): The ``only if''part  holds with $N=1$ by definition of receptive.  For the ``if'' part, given $u,v$ and $N$, let $u' = u(y_{[0,p-1]})^N$.  Then $u'$ is synchronizing and for all $k \ge 1$, $u'(y_{[0,p-1]})^kv = u(y_{[0,p-1]})^{k + N}v \in \mathcal{B}(X)$. 

(2): If $y_{[i,j]}$ is synchronizing, then so is any word which contains $y_{[i,j]}$  and thus so is $u:= y_{[0,mp-1]}$ for some positive integer $m$. But then for any $k \ge 1$, $u(y_{[0,p-1]})^k u$ is a power of $y_{[0,p-1]}$  and thus belongs to $\mathcal{B}(Y)$, and so $y$ is receptive. 

(3): This follows immediately from the fact that in an SFT all sufficiently long words are synchronizing (see, for example, \cite[Theorem 2.1.8]{LM}).
\end{proof}



\begin{lemma}\label{11-03-23b} Let $X$ be an irreducbile SFT and  $\pi : X \to Y$ be a factor code. Let $w \in Y$ be a receptive periodic point. Then, there exist an irreducible SFT $X'$, a factor code $\pi' : X' \to Y$ and an embedding $\iota : X \to X'$ such that $\pi' \circ \iota = \pi$ and ${\pi'}^{-1}(w)$ contains a periodic element of the same least period as $w$. Moreover, when the least period of $w$ is a multiple of the global period of $X$, the global period of $X'$ can be arranged to be the same as that of $X$.
\end{lemma}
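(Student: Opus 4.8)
The plan is to reduce, via \reflemma{08-05-24c}, to the case where $X=X_G$ for a finite irreducible labeled graph $\mathcal{G}=(G,\mathcal{L})$ and $\pi=\mathcal{L}_\infty$, and then to enlarge $G$ to a graph $G'\supseteq G$ by splicing in a new simple cycle that presents the desired periodic preimage. Write $p$ for the least period of $w$ and $\overline{w}:=w_{[0,p)}$, and, since $w$ is receptive (\refdefn{receptive_def}), pick synchronizing words $m_1,m_2\in\mathcal{B}(Y)$, which we may take nonempty (replace $m_1,m_2$ by $m_1\overline{w},\overline{w}m_2$ if needed), with $m_1\overline{w}^km_2\in\mathcal{B}(Y)$ for all $k\geq 1$. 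Fix a path $\rho=\alpha\mu\beta$ in $G$ with $\mathcal{L}(\alpha)=m_1$, $\mathcal{L}(\mu)=\overline{w}^{k_0}$, $\mathcal{L}(\beta)=m_2$ for some $k_0\geq 1$ (such a path exists since $m_1\overline{w}^{k_0}m_2\in\mathcal{B}(X_G)$), and set $s^-:=i_G(\alpha)$, $t^+:=t_G(\beta)$. Form $G'$ by adjoining to $G$: new vertices $c_0,\dots,c_{p-1}$ with edges $c_i\to c_{i+1\bmod p}$ labelled $w_i$ (a new cycle $C$); a new simple path $B_L$ of $|m_1|$ edges from $s^-$ to $c_0$ labelled $m_1$; and a new simple path $B_R$ of $p+|m_2|$ edges from $c_0$ to $t^+$ labelled $\overline{w}\,m_2$. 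Take $X'=X_{G'}$, $\pi'=\mathcal{L}'_\infty$, and $\iota$ the composition of the recoding conjugacy with the inclusion $X_G\hookrightarrow X_{G'}$.

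The crux, and the step I expect to be the main obstacle, is to verify $\mathcal{L}'_\infty(X_{G'})=Y$; the inclusion $Y\subseteq\mathcal{L}'_\infty(X_{G'})$ is immediate from $X_G\subseteq X_{G'}$. By inspecting in- and out-degrees at the new vertices and at $s^-,t^+$, every bi-infinite path in $G'$ that uses a new edge is, up to obvious one-sided or periodic degenerations, of the form: a path in $G$ ending at $s^-$, then $B_L$, then $\ell\geq 0$ turns around $C$, then $B_R$, then a path in $G$ starting at $t^+$. Any finite subword of its label is a subword of $\mu'm_1\overline{w}^{\ell+1}m_2\nu'$, where $\mu'$ labels a path in $G$ ending at $s^-$ and $\nu'$ labels a path in $G$ starting at $t^+$; since $\alpha$ runs from $s^-$ and is labelled $m_1$ we get $\mu'm_1\in\mathcal{B}(Y)$, likewise $m_2\nu'\in\mathcal{B}(Y)$, and $m_1\overline{w}^{\ell+1}m_2\in\mathcal{B}(Y)$ by receptiveness, so two applications of the fact that $m_1,m_2$ are synchronizing give $\mu'm_1\overline{w}^{\ell+1}m_2\nu'\in\mathcal{B}(Y)$; hence the label lies in $Y$. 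Routing both $B_L$ and $B_R$ through the single vertex $c_0$, with an extra $\overline{w}$ prepended to $m_2$ on $B_R$, is precisely what forces the word inserted between the two pieces of $G$ to be some $m_1\overline{w}^km_2$ with $k\geq 1$, never the possibly forbidden $m_1m_2$; this is the one delicate point.

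Granting this, the remaining verifications are routine. The graph $G'$ is irreducible by construction, so $X'=X_{G'}$ is an irreducible SFT; $\iota$ is an embedding with $\pi'\circ\iota=\pi$ since $\mathcal{L}'$ restricts to $\mathcal{L}$ on $G$; and the bi-infinite path winding around $C$ presents a point $u'\in X'$ with $\pi'(u')=w$ whose least period divides $p$, while, as $\pi'$ intertwines the shifts and $\pi'(u')=w$ has least period $p$, that least period is also a multiple of $p$, hence equals $p$. For the final assertion, assume $p$ is a multiple of $d:=\operatorname{per}(X)$. Colouring the vertices of $G$ by $\mathbb{Z}/d$ from the canonical cyclic partition so that path lengths equal colour differences mod $d$, every cycle of $G$ has length divisible by $d$, so does $C$ (length $p$), and every cycle of $G'$ through the new vertices has length $|m_1|+(\ell+1)p+|m_2|+L$ with $L$ the length of a path in $G$ from $t^+$ to $s^-$; this is divisible by $d$ because $\rho$ exhibits a path in $G$ from $s^-$ to $t^+$ of length $|m_1|+k_0p+|m_2|$, forcing $L\equiv-(|m_1|+|m_2|)\pmod d$. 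Thus $d\mid\operatorname{per}(X_{G'})$, while $\operatorname{per}(X_{G'})\mid\operatorname{per}(X_G)=d$ since $G\subseteq G'$, so $\operatorname{per}(X')=d=\operatorname{per}(X)$.
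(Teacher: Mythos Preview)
Your construction is correct and essentially identical to the paper's: both recode via \reflemma{08-05-24c}, adjoin to $G$ a new simple cycle of length $p$ with entry and exit paths labelled by the synchronizing words (the paper places the mandatory extra copy of $\overline{w}$ on the entry path, you on the exit path $B_R$), and verify that the image equals $Y$ via synchronization and the period equality via the witness path $\rho$ (the paper phrases the latter as ``replace each new subpath by $\alpha$''). Your descriptions of bi-infinite paths and of cycles through the new vertices tacitly assume a single excursion through the added structure, whereas a path or cycle may traverse it several times; the multi-excursion case follows immediately by iterating the synchronization/replacement argument, and the paper is comparably terse here.
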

\begin{proof}
	Up to recoding, we may assume that $X=X_G$ for an irreducible graph and $\pi: X_G\to Y$ is a labeling of edges of $G$ (cf. Lemma \ref{08-05-24c}).  Let $n$ be the least period of $w$. Since $w$ is receptive, there are synchronizing words $s, t\in \mathcal {B}(Y)$ such that $s(w_{[0,n-1]})^k t\in \mathcal {B}(Y)$ for all $k>0$. Let $\alpha$ be a path in $G$ labeled $sw_{[0,n-1]} t$ and let $i_G(\alpha)$ and $t_G(\alpha)$ be the initial and terminal vertices of $\alpha$, respectively. Now, create a new graph $G'$ by adding the following labeled paths to $G$:
	\begin{enumerate}
		\item A simple path of length $\vert s \vert+n$ starting at $i_G(\alpha)$ and label the path by $s w_{[0,n-1]}$. Let $r$ be the terminal vertex of this path;
		\item A simple cycle of length $n$ at $r$. Label it by $w_{[0,n-1]}$.
		\item A simple path of length $\vert t\vert$ from $r$ to $t_G(\alpha)$. Label it by $t$.
	\end{enumerate}
	Let $X':=X_{G'}$, $\pi'$ be the $1$-block code defined by the labeling of $G'$ and $Y'$ be the factor of $X'$ under $\pi'$ (i.e., $\pi': X'\to Y'$). First, observe that by construction, $\pi^{-1}(w)$ contains a periodic element of least period $n$. Then, noting that $G$ is a subgraph of $G'$, we immediately derive that there is an embedding $\iota: X\to X'$; moreover, since $s$ and $t$ are synchronizing, the language of $Y'$ is the same as that of $Y$, implying that $Y'=Y$. Thus, $\pi' \circ \iota$ is just $\pi$.
	
	Now, let $p$ be the period of $G$ (and therefore also the global period of $X$) and assume that $n$ is a multiple of $p$.
    Consider the vertex $i_G(\alpha)$ in $G$. The global period $p$ of $G$ and hence of $X_G$ is the greatest common divisor of the lengths of cycles in $G$ starting and ending at $i_G(\alpha)$, and similarly for $G' $ and $X_{G'}$. Consider such a cycle in $G'$ of length $l$. If it contains some of the new vertices it must contain a number of the new paths from $i_G(\alpha)$ to $t_G(\alpha)$; all having lengths in $\{|s|+|t| + m n: m\in \mathbb{N}\}$. By exchanging each of these subpaths by $\alpha$ we get a cycle in $G$ whose length is in $\{l - m n: m\in \mathbb{N}\}$. Hence, when we assume that $n$ is a multiple of $p$ we conclude that $p$ divides $l$. It follows that the global period of $G'$ is also $p$.
\end{proof}

Lemma \ref{11-03-23b} leads to the following alternative characterization of receptive periodic points.

\begin{thm}\label{16-03-23a} Let $Y$ be an irreducible sofic shift and $w$ a periodic point in $Y$ with least period $p$. Then, $w$ is receptive if and only if there is an irreducible SFT $X$, a factor code $\pi : X \to Y$ and a periodic point $x\in X$ with least period $p$ such that $\pi(x) = w$.
\end{thm}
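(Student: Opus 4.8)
The plan is to prove the two implications separately, with the forward direction ($w$ receptive $\Rightarrow$ the desired cover exists) being essentially immediate from Lemma \ref{11-03-23b}, and the reverse direction ($w$ lifts to a periodic point of the same least period $\Rightarrow$ $w$ is receptive) being the substantive half. For the forward direction, start from any cover $\pi_0 : X_0 \to Y$ of $Y$ by an irreducible SFT (such a cover exists for every irreducible sofic shift, e.g. the Fischer cover presented as an edge shift via Lemma \ref{08-05-24c}). Since $w$ is receptive, apply Lemma \ref{11-03-23b} to $\pi_0$ and $w$ to obtain an irreducible SFT $X$, a factor code $\pi : X \to Y$, and an embedding $\iota : X_0 \to X$ with $\pi \circ \iota = \pi_0$, such that $\pi^{-1}(w)$ contains a periodic point $x$ of least period $p$. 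This $x$ is exactly the point we need, so the forward direction is done.

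For the reverse direction, suppose $\pi : X \to Y$ is a factor code from an irreducible SFT $X$ and $x \in X$ is periodic of least period $p$ with $\pi(x) = w$. I would first recode so that $X = X_G$ for an irreducible graph $G$ with $\pi = \mathcal{L}_\infty$ a labeling of edges (Lemma \ref{08-05-24c}). Since $X$ is an SFT, by \cite[Theorem 2.1.8]{LM} all sufficiently long words in $\mathcal{B}(X)$ are synchronizing \emph{for $X$}; write $x = c^\infty$ where $c$ is a cycle of length $p$ in $G$, and choose $k$ large enough that $c^k$ is synchronizing for $X$. The key point is to transfer synchronization from $X$ down to $Y$: I claim that if $a \in \mathcal{B}(X)$ is synchronizing for $X$ and $\pi$ is a $1$-block code, then $\mathcal{L}(a)$ need not be synchronizing for $Y$ in general — so a direct push-forward fails, and this is where the argument requires care. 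The standard fix is to pad: choose a magic word $m$ for the right Fischer cover of $Y$ (which exists by \cite[Proposition 3.3.16]{LM}), equivalently a synchronizing word for $Y$ by Lemma \ref{19-03-23d}. Using irreducibility of $G$, find a word $\mu_1 \in \mathcal{B}(X)$ so that $m$ (as a word in $\mathcal{B}(Y)$) can be realized as $\mathcal{L}(p_1)$ for some path, and so that $p_1 c^k$ is a legal path in $G$ ending where $c$ begins; similarly find $\mu_2$ on the other side with $c^k p_2$ legal and $\mathcal{L}(p_2)$ containing a synchronizing word of $Y$. Then set $s := \mathcal{L}(\mu_1 c^k)$ and $t := \mathcal{L}(c^k \mu_2)$: both are synchronizing for $Y$ because each contains a synchronizing word for $Y$, and by construction $s (w_{[0,p-1]})^j t = \mathcal{L}(\mu_1 c^{k+j} \mu_2) \in \mathcal{B}(Y)$ for all $j \geq 0$ (since $\mu_1 c^{k+j} \mu_2$ is a legal path in $G$). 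Hence $w$ is receptive.

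The main obstacle, as flagged above, is the transfer of synchronization under the factor code: one cannot simply say "$a$ synchronizing in $X$ implies $\mathcal{L}(a)$ synchronizing in $Y$." The resolution is to not rely on that at all, but instead to build the required synchronizing words $s,t$ for $Y$ directly from honest synchronizing words of $Y$ (obtained via the Fischer cover and Lemma \ref{19-03-23d}), while using irreducibility of $G$ to glue them onto powers of the cycle $c$ so that the concatenations remain legal paths in $G$ and hence project into $\mathcal{B}(Y)$. One routine check to make along the way is that the powers $(w_{[0,p-1]})^j$ appearing are taken with respect to the \emph{least} period $p$ of $w$ — this is automatic since $\pi(x) = w$ forces $w_{[0,p-1]} = \mathcal{L}(c)$ and $p$ is the least period of $w$ by hypothesis — and that the gluing words $\mu_1, \mu_2$ can be chosen once and for all independently of $j$, which follows from irreducibility of $G$ applied to the fixed pair of vertices (the endpoint of the $s$-path, the startpoint of $c$) and (the endpoint of $c$, the startpoint of the $t$-path).
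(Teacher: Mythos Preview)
Your proof is correct and follows essentially the same approach as the paper: the `only if' direction is Lemma~\ref{11-03-23b} applied to any initial cover, and the `if' direction lifts a synchronizing word of $Y$ to a path $\tau$ in $G$, then uses irreducibility of $G$ to connect $\tau$ to the cycle $c$ on both sides, exactly as the paper does with its paths $\tau\alpha$ and $\beta\tau$. Your detour through $c^k$ being synchronizing for $X$ is unnecessary (the paper never uses it) and causes a harmless arithmetic slip --- with your definitions $s(w_{[0,p-1]})^j t = \mathcal{L}(\mu_1 c^{2k+j}\mu_2)$, not $\mathcal{L}(\mu_1 c^{k+j}\mu_2)$ --- and the conflation of $\mu_i$ with $p_i$ should be cleaned up.
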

\begin{proof} `if part': By recoding (cf. Lemma \ref{08-05-24c}), we may assume that there is a labeled graph $(G, \mathcal{L})$ such that $G$ is irreducible, $X=X_G$ and $\pi$ is the labeling map defined by $\mathcal{L}$. Then $x_{[0, p-1]}$ is represented by a cycle $\gamma$ in $G$. Now
choose a synchronizing word $s$ in $Y$ and let $\tau$ be a path in $G$ labeled $s$. Since $G$ is irreducible, there are paths $\alpha$ and $\beta$ in $G$ such that for all $k$, $\tau \alpha \gamma^k \beta \tau$ is also a path in $G$. Let $u$ be the label of $\tau \alpha$ and $v$ be the label of $\beta\tau$. Then, $u$, $v$ are both synchronizing and $u w_{[0, p-1]}^k v$ is allowed in $Y$ for all $k$. Thus, $w$ is receptive.
	
	`only if part': This follows from Lemma \ref{11-03-23b}.
\end{proof}





\begin{defn}\label{15-06-25}
Let $Y$ be an irreducible sofic shift.  We denote by $\text{per}(Y)$ the greatest common divisor of $\{n \in \mathbb N: \ \Rec_n(Y) \neq \emptyset\}$. In symbols
$$
\text{per} (Y) := \ \gcd\left( \{n \in \mathbb N: \ \Rec_n(Y) \neq \emptyset\}\right) .
$$
\end{defn}

When $Y$ is an irreducible SFT, {recalling from Proposition \ref{receptive_other_characterization} item (3) that all periodic points in $Y$ are receptive}, we have $\text{per}(Y)$ is the same as the global period of $Y$. However, in general $\text{per} (Y)$ is not the same as the gcd of the periods of the periodic points of $Y$. The above definition of $\text{per}(Y)$ will play a pivotal role in the following, but we postpone a more detailed investigation of the notion of period for irreducible sofic shifts to Section \ref{period}.

\begin{lemma}\label{05-07-25} Let $X$ and $Z$ be irreducible SFTs with canonical cyclic partitions $X = \sqcup_{i=1}^n X_i$ and $Z= \sqcup_{j=1}^m Z_j$, respectively. Let $Y$ be a sofic shift and let $\pi : X \to Y$ and $\rho:Z \to Y$ be factor codes. Assume $\# \pi^{-1}(y) = 1$ for some element $y \in Y$. Then, $n$ divides $m$ and there is a surjective map $\mu : \{1,2,\cdots, m\} \to \{1,2,\cdots, n\}$ such that $\rho(Z_j) = \pi(X_{\mu(j)})$ for all $j \in \{1,2,\cdots, m\}$.
\end{lemma}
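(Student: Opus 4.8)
The plan is to exploit the single point $y$ with $\#\pi^{-1}(y)=1$ as a "rigid" reference: since $\pi$ is injective over $y$, the cyclic structure of $X$ is faithfully recorded inside $Y$ near the orbit of $y$, and I will use this to pull back the cyclic partition of $Y$ consistently. First I would fix sliding block codes (enlarging window sizes as needed) so that $\pi$ and $\rho$ are $1$-block codes after recoding via Lemma \ref{08-05-24c}, although this is merely cosmetic. The key observation is that the canonical cyclic partition $X=\sqcup_{i=1}^n X_i$ descends to a partition of $\pi(X)=Y$ when restricted appropriately: define, for a point $w\in Y$, its "$X$-phase" whenever $w$ has a $\pi$-preimage, and show that on the dense $G_\delta$ of doubly transitive points (or on a suitable shift-invariant residual set) this phase is well-defined modulo $n$. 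The point $y$ anchors this: $\sigma^k(y)\in\pi(X_{i+k})$ unambiguously because $\pi^{-1}(\sigma^k(y))$ is the single point $\sigma^k(x_0)$ where $\pi(x_0)=y$ and $x_0\in X_{i_0}$ for a fixed $i_0$.

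Next I would run the same argument for $\rho$ and $Z=\sqcup_{j=1}^m Z_j$, and then compare. The heart of the matter is to show that $\pi(X_i)$ and $\rho(Z_j)$ are "commensurable" pieces of $Y$: each set $\pi(X_i)$ is closed, shift-$\sigma^n$-invariant, the sets $\{\pi(X_i)\}_{i=1}^n$ cover $Y$, and $\sigma$ permutes them cyclically; likewise for $\{\rho(Z_j)\}_{j=1}^m$. Because $Y$ is irreducible sofic, it has doubly transitive points, and every such point $w$ lies in some $\pi(X_i)$ and some $\rho(Z_j)$. I would argue that the map sending a doubly transitive $w$ to its $\pi$-phase factors through its $\rho$-phase: concretely, if $w,w'$ are doubly transitive and lie in the same $\rho(Z_j)$, I want them in the same $\pi(X_i)$. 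To see this, use that $w$ and $w'$ can be connected "with controlled phase" — by irreducibility of $Z$ one finds $z,z'$ with $\rho(z)=w$, $\rho(z')=w'$ in the same $Z_j$, hence $z'=\sigma^{km}(\text{something in the }Z_j\text{-fiber over }w)$ up to the $\sigma^m$-dynamics, so the transition word realizing the passage from (a shift of) $w$ to $w'$ has length $\equiv 0 \pmod m$; lifting that same word through $\pi$ — possible because near the synchronizing/doubly-transitive regime the lift is essentially forced, and in particular $y$'s neighborhood is forced — shows the length is also $\equiv 0 \pmod n$, giving $w,w'$ the same $\pi$-phase. This yields a well-defined surjection $\mu:\{1,\dots,m\}\to\{1,\dots,n\}$ with $\rho(Z_j)\subseteq\pi(X_{\mu(j)})$; surjectivity is immediate since the $\rho(Z_j)$ cover $Y$ and each $\pi(X_i)$ meets a doubly transitive point. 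Equality $\rho(Z_j)=\pi(X_{\mu(j)})$ then follows because both are closed, each $\pi(X_i)$ is the closure of its doubly transitive points (as $X_i$ is a mixing SFT under $\sigma^n$, hence has dense doubly transitive points, which map to doubly transitive points of $Y$ — here again $\#\pi^{-1}(y)=1$ is not needed, just that $\pi$ is a factor code onto an irreducible sofic shift), and the doubly transitive points of $\pi(X_{\mu(j)})$ all have the right $\rho$-phase $j$ by the same phase-tracking argument run in reverse. Finally, $n\mid m$ drops out: $\sigma$ acts on $\{1,\dots,m\}$ by $j\mapsto j+1$ and on $\{1,\dots,n\}$ by $i\mapsto i+1$, and $\mu$ intertwines these actions ($\mu(j+1)=\mu(j)+1$), so $\mu$ is an equivariant surjection $\mathbb{Z}/m\to\mathbb{Z}/n$, which forces $n\mid m$.

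The main obstacle I anticipate is making the "phase-tracking under lifting" rigorous: showing that a transition word of length $\equiv 0 \pmod m$ connecting two doubly transitive points of $Y$ lifts through $\pi$ to a path whose length is $\equiv 0\pmod n$. The clean way is to reduce everything to the single point: since $\#\pi^{-1}(y)=1$, for the specific doubly transitive point (or periodic point in the closure) built from $y$ the $\pi$-phase is literally the $X$-coordinate of $x_0$, and then one transports this along the (transitive) orbit structure of $Y$, using that any two doubly transitive points are $\sigma^k$-asymptotically interchangeable with $k$ of a prescribed residue determined by the $\rho$-dynamics. I expect this to be the step requiring the most care, and it may be cleanest to phrase it via periodic points: every $\pi(X_i)$ contains the image of a periodic point of $X$ whose least period is a multiple of $n$ (in fact of the form multiple of $n$), every $\rho(Z_j)$ similarly, and comparing least periods of a common periodic point $w\in\pi(X_i)\cap\rho(Z_j)$ — whose $\pi$-preimage and $\rho$-preimage are periodic — pins down the phases and forces $n\mid m$ together with the partition-refinement statement.
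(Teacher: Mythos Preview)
Your plan is substantially more complicated than necessary, and the step you yourself flag as the main obstacle---``phase-tracking under lifting''---does not become rigorous along the lines you sketch. The notion of a ``transition word of length $\equiv 0 \pmod m$'' connecting two doubly transitive points $w,w'$ is not well-defined (there are transitions of many different lengths, with no canonical choice), and even once you fix one, lifting it through $\pi$ need not land in a prescribed phase class unless you already know the lift is essentially unique near those points, which is precisely what is at issue. Your argument for the \emph{equality} $\rho(Z_j)=\pi(X_{\mu(j)})$ by ``running the phase-tracking in reverse'' would, by symmetry, require a point with a unique $\rho$-preimage, and you are not given one. The periodic-point fallback has the same defect: a common periodic point $w\in\pi(X_i)\cap\rho(Z_j)$ can have $\pi$- and $\rho$-preimages of various periods, so comparing them does not pin down phases.

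The paper's argument bypasses all of this with a direct transitivity/Baire step. Since $(\pi(X_i),\sigma^{nm})$ is mixing, hence transitive, and $\pi(X_i)=\bigcup_{j}\bigl(\rho(Z_j)\cap\pi(X_i)\bigr)$ is a finite union of closed sets, one of them has nonempty interior in $\pi(X_i)$; a point with dense $\sigma^{nm}$-orbit then enters that piece, so its orbit closure $\pi(X_i)$ is contained in $\rho(Z_j)$. Symmetrically $\rho(Z_j)\subseteq\pi(X_{i'})$ for some $i'$. The hypothesis $\#\pi^{-1}(y)=1$ is used exactly once, and very cleanly: shifting $y$ shows every $\pi(X_k)$ contains a point with a unique $\pi$-preimage, and such a point cannot lie in $\pi(X_i)\cap\pi(X_{i'})$ for $i\neq i'$; hence $i=i'$ and $\pi(X_i)=\rho(Z_j)$. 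The uniqueness of $\mu(j)$, the equivariance $\mu(j+1)=\mu(j)+1$, surjectivity, and $n\mid m$ then follow exactly as in the last part of your plan.
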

\begin{proof} Note that $\sigma^{nm}$ is mixing and hence transitive on $\pi(X_i)$. Let $a_i \in \pi(X_i)$ be a point with a dense orbit for $\sigma^{nm}$. Since $\pi(X_i) = \bigcup_{j=1}^m \rho(Z_j) \cap \pi(X_i)$, there is a $j \in \{1,2,\cdots, m\}$ such that $\rho(Z_j) \cap \pi(X_{i})$ has non-empty interior in $\pi(X_i)$. It follows that $\sigma^{nm\ell}(a_i) \in \rho(Z_j) \cap \pi(X_{i})$ for some $\ell \in \mathbb Z$, implying that $\pi(X_i) \subseteq \rho(Z_j)$. A symmetric argument shows that $\rho(Z_j) \subseteq  \pi(X_{i'})$ for some $i' \in \{1,2,\cdots, n\}$.
By assumption there is a $k$ such that the set $\pi(X_k)$ contains an element with a unique pre-image under $\pi$. However,
\begin{equation}\label{05-07-25b}
\sigma(\pi(X_k)) = \pi(X_{k+1})
\end{equation}
with addition mod $n$, and hence $\pi(X_k)$ contains such an element for every $k$. The conclusion that  $\pi(X_i) \subseteq \rho(Z_j)  \subseteq  \pi(X_{i'})$ implies therefore that $i =i'$, and hence that $\pi(X_i) = \rho(Z_j)$.
 Since
\begin{equation}\label{05-07-25a}
\sigma( \rho(Z_j)) =  \rho(Z_{j+1})
\end{equation}
with addition mod $m$, it follows that for every $j \in \{1,2,\cdots,m\}$ there is an element $\mu(j) \in \{1,2,\cdots, n\}$ such that $\rho(Z_j) = \pi(X_{\mu(j)})$. Since each $\pi(X_i)$ contains an element with a unique pre-image under $\pi$, the element $\mu(j) \in \{1,2, \cdots, n\}$ is unique so that $\mu: \{1,2,\cdots, m\} \to \{1,2,\cdots, n\}$ is a well-defined map. It follows then from \eqref{05-07-25a} and \eqref{05-07-25b} that $c_n \circ \mu = \mu \circ c_m$, where $c_n$ and $c_m$ denote the cyclic permutations of $\{1,2,\cdots, n\}$ and $\{1,2,\cdots, m\}$, respectively. This in turn implies that $\mu$ is surjective and that $n$ divides $m$.
\end{proof}

\begin{cor}\label{05-07-25d} Let $Y$ be an irreducible sofic shift and $\pi : X \to Y$ the right Fischer cover of $Y$ and $\rho : Z \to Y$ an arbitrary cover of $Y$. Then $\text{per}(X)$ divides $\text{per}(Z)$.
\end{cor}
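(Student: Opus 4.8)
The plan is to read this off directly from Lemma~\ref{05-07-25}. Since $\pi:X\to Y$ and $\rho:Z\to Y$ are covers of $Y$, both $X$ and $Z$ are irreducible SFTs, so we may write their canonical cyclic partitions as $X=\sqcup_{i=1}^{n}X_i$ and $Z=\sqcup_{j=1}^{m}Z_j$. For an irreducible SFT the number $\text{per}(\cdot)$ of Definition~\ref{15-06-25} coincides with the global period: by Proposition~\ref{receptive_other_characterization}(3) every periodic point of an SFT is receptive, so the gcd defining $\text{per}$ is taken over exactly the least periods of the periodic points, which is the global period. Hence $\text{per}(X)=n$ and $\text{per}(Z)=m$, and it suffices to prove $n\mid m$.

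To apply Lemma~\ref{05-07-25} to $\pi$ and $\rho$ I need a point $y\in Y$ with $\#\pi^{-1}(y)=1$. Here I would invoke the fact recalled in Section~\ref{notations} that the $1$-block code $\mathcal L_\infty$ induced by the right Fischer cover is almost invertible, so every doubly transitive point of $Y$ has a unique preimage; since $Y$ is irreducible it has doubly transitive points, so such a $y$ exists. Lemma~\ref{05-07-25}, applied with this $y$, then gives at once that $n$ divides $m$, that is, $\text{per}(X)\mid\text{per}(Z)$.

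I do not expect any real obstacle here: all the substance is in Lemma~\ref{05-07-25}, and the two inputs used above --- almost invertibility of the Fischer cover and the existence of a doubly transitive point in an irreducible subshift --- are standard and already in place. If one wishes to avoid quoting almost invertibility, one could instead choose a synchronizing word $u\in\mathcal B(Y)$ (equivalently, by Lemma~\ref{19-03-23d}, a magic word for the Fischer cover) and a doubly transitive point $y$ of $Y$ in which $u$ occurs infinitely often on both sides; since the Fischer cover is right-resolving, each occurrence of $u$ pins down the vertex traversed, which forces $\#\pi^{-1}(y)=1$, exactly what Lemma~\ref{05-07-25} needs.
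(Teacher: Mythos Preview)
Your proposal is correct and follows essentially the same approach as the paper: invoke Lemma~\ref{05-07-25} with the right Fischer cover in the role of $\pi$, using almost invertibility to supply a point $y$ with $\#\pi^{-1}(y)=1$. The paper's proof is a one-line reference to exactly this; your additional remarks (identifying $\text{per}$ with the global period for irreducible SFTs, and the alternative via magic words) are sound elaborations but not needed.
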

\begin{proof} This follows from Lemma \ref{05-07-25} because $\# \pi^{-1}(y) =1$ when $y$ is a doubly transitive point, as mentioned in Section~\ref{notations}.
\end{proof}

\begin{lemma}\label{03-06-25}  Let $Y$ be an irreducible sofic shift and $\pi : X \to Y$ a cover of $Y$. Then,
\begin{itemize}
\item[(a)] $\text{per} (Y)$ divides $\text{per} (X)$.
\end{itemize}
Assume that $\pi : X \to Y$ is the Fischer cover of $Y$. Then,
\begin{itemize}
\item[(b)] $\text{per} (Y) = \text{per} (X)$.
\end{itemize}
If $Z$ is an irreducible sofic shift and $Y$ is a factor of $Z$, then	
\begin{itemize}
\item[(c)] 
$\text{per}(Y)$ divides $\text{per}(Z)$.
\end{itemize}
\end{lemma}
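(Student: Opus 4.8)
The plan is to prove (a) directly from the definitions, then obtain the one remaining divisibility of (b) from \refthm{16-03-23a} and \refcor{05-07-25d}, and finally deduce (c) by applying (a) and (b) to a cover of $Y$ manufactured from the Fischer cover of $Z$.

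For (a): I would first use \reflemma{08-05-24c} to assume that $X=X_G$ for a finite irreducible graph $G$ and that $\pi=\mathcal L_\infty$, and set $p:=\text{per}(X)$, which (as noted just after \refdefn{15-06-25}) equals the global period of $X$, i.e.\ the greatest common divisor of the lengths of the cycles of $G$. Since $\text{per}(Y)$ is well defined, $Y$ possesses a receptive periodic point and hence (\refdefn{receptive_def}) a synchronizing word $s\in\mathcal B(Y)$; fix a path $\tau$ in $G$ with $\mathcal L(\tau)=s$ and, using irreducibility of $G$, complete it to a cycle $C_0$ based at $v:=i_G(\tau)$. Then choose cycles $d_1,\dots,d_r$ based at $v$ with $\gcd(|d_1|,\dots,|d_r|)=p$, which is possible because for an irreducible graph the gcd of the lengths of the cycles through a fixed vertex equals the global period. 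The cycles $C_0,C_0d_1,\dots,C_0d_r$ all contain $\tau$, and the gcd of their lengths is exactly $p$: it divides each $|d_j|=|C_0d_j|-|C_0|$ and hence divides $p$, whereas conversely $p$ divides every cycle length. For each such cycle $C$ the point $\mathcal L(C)^\infty\in Y$ is periodic and contains $s$, hence is receptive by \refprop{receptive_other_characterization}(2), so $\text{per}(Y)$ divides its least period, which divides $|C|$. Letting $C$ range over $C_0$ and the $C_0d_j$ then gives $\text{per}(Y)\mid\gcd(|C_0|,|C_0d_1|,\dots,|C_0d_r|)=p=\text{per}(X)$.

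For (b): now $X$ is the right Fischer cover, and (a) already gives $\text{per}(Y)\mid\text{per}(X)$, so it suffices to prove $\text{per}(X)\mid\text{per}(Y)$. Fix $n$ with $\Rec_n(Y)\neq\emptyset$ and a receptive periodic point $w\in Y$ of least period $n$. By \refthm{16-03-23a} there exist an irreducible SFT $X'$, a factor code $\rho:X'\to Y$ (thus a cover of $Y$), and a periodic point $x'\in X'$ of least period $n$ with $\rho(x')=w$. Since $\text{per}(X')$ is the global period of $X'$ it divides the least period of every periodic point of $X'$, so $\text{per}(X')\mid n$; and $\text{per}(X)\mid\text{per}(X')$ by \refcor{05-07-25d}. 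Therefore $\text{per}(X)\mid n$ for every $n$ with $\Rec_n(Y)\neq\emptyset$, so $\text{per}(X)$ divides $\gcd\{\,n:\Rec_n(Y)\neq\emptyset\,\}=\text{per}(Y)$, and together with (a) this gives $\text{per}(Y)=\text{per}(X)$.

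For (c): let $g:Z\to Y$ be a factor code and $\rho_Z:F\to Z$ the right Fischer cover of $Z$, so $F$ is an irreducible SFT. Then $g\circ\rho_Z:F\to Y$ is a factor code from an irreducible SFT onto $Y$, i.e.\ a cover of $Y$, so (a) gives $\text{per}(Y)\mid\text{per}(F)$ and (b) applied to $Z$ gives $\text{per}(Z)=\text{per}(F)$; hence $\text{per}(Y)\mid\text{per}(Z)$. The only step beyond bookkeeping with \refprop{receptive_other_characterization}, \refthm{16-03-23a} and \refcor{05-07-25d} is the combinatorial fact used in (a) — that the gcd of the lengths of the cycles of an irreducible graph through a fixed vertex equals its global period — and this is standard, provable in a couple of lines by transporting an arbitrary cycle to $v$ along a forward and a backward path supplied by irreducibility.
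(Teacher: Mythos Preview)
Your proof is correct; all three parts go through as written. The organization differs from the paper's, however, and your route is slightly more economical in one place.

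The paper proves (b) first and then deduces (a) from (b) together with \refcor{05-07-25d}: for an arbitrary cover $\pi:X\to Y$, that corollary gives $\text{per}(X_{\text{Fischer}})\mid \text{per}(X)$, and (b) gives $\text{per}(Y)=\text{per}(X_{\text{Fischer}})$. You instead prove (a) directly for an arbitrary cover using the ``cycle containing a synchronizing word'' trick; this is the same manoeuvre the paper uses inside (b) for the Fischer cover, and as you observe it requires nothing special about the Fischer cover. For the reverse divisibility $\text{per}(X)\mid\text{per}(Y)$ in (b), the paper first builds a \emph{single} cover $\rho:Z\to Y$ realizing preimages of several receptive points simultaneously (via \refthm{16-03-23a} and $K-1$ applications of \reflemma{11-03-23b}), and then applies \refcor{05-07-25d} once. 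Your argument applies \refthm{16-03-23a} to each receptive periodic point separately, getting a different cover $X'$ each time, and invokes \refcor{05-07-25d} each time; this avoids \reflemma{11-03-23b} altogether and is a small simplification. Part (c) is identical in both.
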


\begin{proof} (b):  Let $p_1,p_2, \cdots, p_K$ be  receptive periodic points in $Y$ such that $\text{per}(Y) = \gcd\{\per(p_i): \ i = 1,2,\cdots, K\}$. 
From Theorem \ref{16-03-23a} and $K-1$ applications of Lemma \ref{11-03-23b} we get an irreducible SFT cover $\rho : Z \to Y$ with periodic points $z_1,z_2, \cdots, z_K \in Z$ such that $\per(z_i) = \per(p_i)$ for all $i$. Since $\text{per}(Z)$ divides $\per(z_i)$ for all $i$, it follows that $\text{per}(Z)$ divides $\text{per}(Y)$. By Corollary \ref{05-07-25d} this implies that $\text{per}(X)$ divides $\text{per}(Y)$. To obtain (b) we show that, conversely, $\text{per}(Y)$ divides $\text{per}(X)$. Let $(G,\mathcal L)$ be the labeled graph such that $X = X_G$ and $\pi = \mathcal L_\infty$. 
Let $p$ be an arbitrary cycle in $G$ s.t. $|p|= \per(p^\infty)$. Let $r$ be a cycle in $G$ whose label contains a synchronizing word. We may assume that
$i_G(r) = t_G(r) = i_G(p) = t_G(p)$.  Let $p_1:=(pr)^\infty$  and $p_2:=(p^2r)^\infty$. Then $p_1$ and $p_2$ are periodic points which contain synchronizing words and therefore $\pi(p_1)$ and $\pi(p_2)$ are receptive. 
Since $|pr|$ and $|p^2r|$ are periods (not necessarily least periods) of $p_1$ and $p_2$ respectively, 
 $per(Y)$ divides $|pr|$ and $|p^2r|$ and therefore divides $|p|$.
%
Since $p$ was arbitrary, we have that $\text{per}(Y)$ divides $\text{per}(X)$, as claimed.

(a): This follows now from (b) and Corollary \ref{05-07-25d}.

(c): Let $\psi: Z\to Y$ be a factor code and $\phi: X\to Z$ be the Fischer cover of $Z$. Noting that $\psi\circ \phi: X\to Y$ is a factor code, we infer from (a) that $\p(Y)$ divides $\p(X)$. Moreover, by (b) we have  $\p(Z)=\p(X)$. Thus, $\p(Y)$ divides $\p(Z)$.
\end{proof}


\begin{lemma} \label{growthrate_rec}
	Let $Y$ be an irreducible sofic shift and $\pi: X\to Y$ be the Fischer cover of Y. Then	
		$$
		\lim_{n \to \infty} \frac{1}{n\text{per} (Y)} \log r_{n\text{per} (Y)}(\pi) = h(Y).
		$$
\end{lemma}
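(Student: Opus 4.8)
The plan is to sandwich the quantity $\frac{1}{n\,\text{per}(Y)}\log r_{n\,\text{per}(Y)}(\pi)$ between $h(Y) - \epsilon$ and $h(Y)$. The upper bound $\limsup_n \frac{1}{n\,\text{per}(Y)}\log r_{n\,\text{per}(Y)}(\pi) \le h(Y)$ is immediate, since $r_m(\pi) = \#(Q_m(Y)\cap \pi(Q_m(X))) \le q_m(Y) \le \#\mathcal B_m(Y)$, and $\frac{1}{m}\log \#\mathcal B_m(Y) \to h(Y)$. So the content is the matching lower bound $\liminf_n \frac{1}{n\,\text{per}(Y)}\log r_{n\,\text{per}(Y)}(\pi) \ge h(Y)$.

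For the lower bound I would exploit the construction already packaged in Lemma \ref{lem:large_entropy)injectively_fact_SFT}. Fix $\epsilon > 0$. Applying that lemma to the Fischer cover $\pi : X \to Y$ (here $X = X_G$ for $(G,\mathcal L)$ the right Fischer cover, $h(Y) > 0$ — if $h(Y)=0$ the statement is trivial since then $Y$ is a single periodic orbit and both sides vanish appropriately, so assume $h(Y)>0$), we obtain an irreducible SFT $W \subseteq X$ with $\pi|_W$ injective, $h(W) \ge h(Y)-\epsilon$, and $\text{per}(W) = \text{per}(X) = p$ where $p = \text{per}(X)$. By Lemma \ref{03-06-25}(b), $\text{per}(Y) = \text{per}(X) = p$. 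Now since $\pi|_W$ is injective, every periodic point $w$ of $W$ of least period $m$ maps to a periodic point $\pi(w) \in Y$ of least period exactly $m$ (injectivity on $W$ forces the least period to be preserved — a shorter period of $\pi(w)$ would pull back to a shorter period of $w$ by injectivity), and $\pi(w) \in \pi(Q_m(X))$. Hence $q_m(W) \le r_m(\pi)$ for all $m$; in particular $q_{np}(W) \le r_{np}(\pi)$. Since $W$ is an irreducible SFT of global period $p$, $h(W) = \lim_{n\to\infty}\frac{1}{np}\log q_{np}(W)$, so for all large $n$ we get $\frac{1}{np}\log r_{np}(\pi) \ge \frac{1}{np}\log q_{np}(W) \ge h(W) - \epsilon \ge h(Y) - 2\epsilon$. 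Letting $\epsilon \to 0$ gives $\liminf_n \frac{1}{np}\log r_{np}(\pi) \ge h(Y)$, and combined with the upper bound this proves the limit exists and equals $h(Y)$.

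The one point that needs care — and which I expect to be the main (minor) obstacle — is making sure the limit, not just the liminf/limsup, is correctly identified along the subsequence $\{np : n \in \mathbb N\}$: one must confirm that $r_{np}(\pi)$ genuinely has the right exponential growth rate $h(Y)$ along this arithmetic progression, which is why it was essential that Lemma \ref{lem:large_entropy)injectively_fact_SFT} delivers $W$ with global period \emph{exactly} $p = \text{per}(Y)$ (so that $q_{np}(W)$ grows at rate $e^{nph(W)}$ with no periodicity obstruction) rather than merely $h(W)$ close to $h(Y)$. A secondary bookkeeping step is the claim that $\pi|_W$ injective implies least periods are preserved, which follows because $\sigma$-equivariant injectivity means $\pi(w) = \pi(\sigma^k w)$ forces $w = \sigma^k w$.
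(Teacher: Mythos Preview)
Your proof is correct, and takes a genuinely different (and arguably cleaner) route to the lower bound than the paper does. Both arguments establish the upper bound $\limsup \le h(Y)$ trivially and then, for the lower bound, produce an irreducible sub-SFT $W \subseteq X$ of global period $p = \text{per}(X) = \text{per}(Y)$ on which $\pi$ is injective and with $h(W) \ge h(Y)-\epsilon$; then $q_{np}(W) \le r_{np}(\pi)$ gives the desired $\liminf$. The difference is in \emph{how} $W$ is produced: the paper does an ad hoc construction specific to the Fischer cover, building $Z_n \subseteq X_G$ as the set of paths that pass through a fixed magic cycle $\gamma$ with bounded gaps, and verifying directly that (i) $\pi$ is injective on $Z_n$ (from the right-resolving plus magic-word structure), (ii) $\text{per}(Z_n) = \text{per}(X)$ for large $n$, and (iii) $h(Z_n) \to h(Y)$. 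You instead invoke Lemma~\ref{lem:large_entropy)injectively_fact_SFT}, which already packages exactly such a $W$ for \emph{any} cover $\pi$, and then only need Lemma~\ref{03-06-25}(b) to identify $\text{per}(X) = \text{per}(Y)$. Your route is shorter and avoids duplicating a construction; the paper's route is more self-contained and exploits the special structure of the Fischer cover. Your observation that injectivity of $\pi|_W$ forces preservation of least periods is the same as the paper's (stated there as ``$\pi$ restricts to an injective map $Q_k(Z_n) \to Q_k(Y) \cap \pi(Q_k(X))$''). One minor remark: your handling of the degenerate case $h(Y)=0$ (``both sides vanish appropriately'') is a bit loose, but the paper's treatment of that case is equally brief, and the lemma is only ever applied when $h(Y) > 0$.
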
	
\begin{proof}
	 Note first of all that $r_j(\pi) \leq q_j(X)$ for all $j$, almost by definition. Since $\text{per}(Y) = \text{per}(X)$ by Lemma~\ref{03-06-25}(b),  we get
	\begin{equation}\label{05-06-25dx}
		\limsup_{n \to \infty} \frac{1}{n\text{per} (Y)} \log r_{n\text{per} (Y)}(\pi) \leq \limsup_{j\to \infty} \frac{1}{j} \log q_{j}(X) = h(X)
	\end{equation}
	by \cite[Corollary 4.3.8]{LM}. In particular, Lemma \ref{growthrate_rec} holds when $h(Y) = h(X) = 0$, and we assume therefore henceforth that $h(X) > 0$. Note that $X = X_G$ and $\pi = \mathcal L_\infty$, where $\mathcal G =(G ,\mathcal L)$ is irreducible, right-resolving and follower-separated. It follows from \cite[Proposition 3.3.16]{LM} that there is a cycle $\gamma$ in $G$ such that $m= \mathcal L(\gamma)$ is magic for $\mathcal G$. Thus $m$ is also synchronizing for $Y = X_\mathcal G$ by Lemma \ref{19-03-23d}. Let $v$ be the vertex in $G$ where $\gamma$ starts and terminates. Let $Z_n$ be the subset of $X_G$ consisting of the bi-infinite paths in $G$ of the form
	$$
	\cdots b_{-3}\gamma b_{-2} \gamma b_{-1} \gamma b_0 \gamma b_1 \gamma b_2 \gamma b_3 \cdots
	$$
	where $|b_i | \leq n$ for all $i \in \mathbb Z$. Then
	\begin{itemize}
		\item[(i)] $Z_n$ is an irreducible SFT, and
		\item[(ii)] $\# \pi^{-1}(\pi(y)) = 1$ for all $y \in Z_n$.
	\end{itemize}
	We leave the reader to verify that there is an $N\in \mathbb N$ such that for all $n\geq N$ we have also that
	\begin{itemize}
		\item[(iii)] $\text{per}(Z_n) = \text{per}(X)$.
	\end{itemize}
	Let $\epsilon > 0$. Let $A_n$ denote the set of cycles $a$ in $G$ of length $|a| =n$ and with ${i_G(a)} = t_G(a) = v$. As is well-known,
	$$
	\limsup_{k \to \infty} \frac{1}{k} \log (\# A_k) = h(X) ,
	$$
	and we can therefore choose $n \geq N$ so large that $\frac{n}{n+|\gamma|}\geq 1-\epsilon$ and
	$$
	\# A_n \geq e^{n(h(X) - \epsilon)}.
	$$
	Then $\mathcal B_{k(n+|\gamma|)}(Z_n) \geq (\# A_n)^k$ and hence
	\begin{equation}\label{08-07-25}
		\begin{split}
			&h(Z_n) = \limsup_{k \to \infty} \frac{1}{k} \log \left( \# \mathcal B_k(Z_n)\right)  \\
			&\geq \limsup_{k \to \infty} \frac{1}{k(n+|\gamma|)} \log \left((\# A_n)^k\right)\\
			& = \frac{n}{n+|\gamma|} \frac{1}{n}\log (\# A_n)
			\geq (1-\epsilon)(h(X) - \epsilon) .
		\end{split}
	\end{equation}
	Thanks to (iii) we have also that
	\begin{equation}\label{08-07-25a}
		\lim_{k \to \infty} \frac{1}{k \text{per}(X)} \log q_{k \text{per}(X)}(Z_n) = h(Z_n).
	\end{equation}
	Note that it follows from (ii) that $\pi$ restricts to an injective map
	$$
	\pi : \ Q_k(Z_n) \to Q_k(Y) \cap \pi(Q_k(X)) ,
	$$
	implying that $q_k(Z_n) \leq r_k(\pi)$ for all $k \in \mathbb N$. Combined with \eqref{08-07-25} and \eqref{08-07-25a} this gives the estimate
	\begin{equation}\label{09-07-25}
		\liminf_{k \to \infty} \frac{1}{k \text{per}(X)} r_{k \text{per}(X)}(\pi) \geq (1-\epsilon)(h(X) - \epsilon) .
	\end{equation}
	Since $\text{per}(X) = \text{per}(Y)$ by Lemma~\ref{03-06-25}(b), the result follows by combining \eqref{05-06-25dx} and \eqref{09-07-25}.
\end{proof}	

When the Fischer cover $X$ of $Y$ is mixing, which happens if and only if $Y$ is mixing, the equality in Lemma \ref{growthrate_rec} follows from Proposition 4.3 of \cite{M}. In fact, MacDonald proves that the equality holds for all mixing covers of $Y$; not only the Fischer cover.


\subsection{The main result on factorizable embeddings}
\label{mainfacemb}

We quickly extract from \cite{T} some definitions, terminology and results that we will need in the following.

Let $X$ be a shift space, $R(X)$ be the closure of the set of periodic points of $X$ and $S(X)$ be the set of synchronizing words of $R(X)$. For $s, t\in S(X)$, we write $s\sim t$ when there exist $x, y\in \mathcal{B}(R(X))$ such that $sxt, tys\in \mathcal{B}(R(X))$. This is an equivalence relation and we let $\mathcal{S}(X) =S(X)/ \sim$ denote its equivalence classes.

For each $\alpha\in \mathcal{S}(X)$, the {\em irreducible component at level $0$}  with respect to $\alpha$, denoted $X_{(\alpha, 0)}$, is  the set of points $x\in R(X)$ such that elements of $\alpha$ appear with bounded gap in $x$ (the bound may depend on the point $x$).

The {\em derived shift} of $X$, denoted $\partial X$,  is the set of points in $R(X)$  which do not contain any synchronizing word of $R(X)$. It is not hard to see that $\partial X$ is a shift space.  So, we can iterate this operation to get a nested sequence of shift spaces: with $\partial^0 X:= X$, the {\em $k$-th derived shift} is defined $\partial^k X:= \partial(\partial^{k-1} X)$. The {\em depth of $X$} is the unique integer $k$ (if any) such that $\partial^{k} X \ne \emptyset$ and $\partial^{k+1} X = \emptyset.$

An {\em irreducible component of $X$ at level $k$} is an irreducible component at level $0$ of $\partial^k X$. An {\em irreducible component of $X$} is an irreducible component of $X$ at some level.  It is not hard to see that the irreducible components of $X$ are disjoint.

If $X$ is sofic, then $\partial X$ is also sofic (see \cite[Theorem 6.6]{T}) and has finitely many irreducible components of level 0. Also, $X$ has finite depth (see \cite[Theorem 6.7]{T}). As a consequence, $X$ has finitely many irreducible components (see \cite[Proposition 6.18]{T}).  Moreover, the closure of an irreducible component of a sofic shift is an irreducible sofic shift (see \cite[Proposition 6.16]{T}). Finally, we note that since a sofic shift has finite depth, each periodic point is contained in a unique irreducible component, (see \cite[Lemma 4.2]{T}).

In the following lemma, we set out some important consequences of the material above.

\begin{lemma} \label{lemma_x}
Let $U \subseteq V$ be sofic shift spaces with $U$ irreducible. Then,

\begin{enumerate}
	\item[(1)]
	There is a unique irreducible component $Y_c$ of $V$  such that
	\begin{itemize}
	\item[(a)] $U \subseteq \overline{Y_c}$, and
	\item[(b)] $U \cap Y_c \ne  \emptyset$.
	\end{itemize}
\end{enumerate}
In addition,
\begin{enumerate}	
	\item[(2)] For all $n$, $\Rec_n(U) \subseteq \Rec_n(\overline{Y_c})$, and
	\item[(3)]
	if $W$ is an irreducible SFT and $\pi:W \to U$ is a factor code, then $W$ is $\p(\overline{Y_c})$-periodic.
\end{enumerate}
\end{lemma}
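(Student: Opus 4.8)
The plan is to prove (1) first — it carries all the weight — and then deduce (2) and (3) from it.

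\textbf{Part (1).} Since $U$ is irreducible sofic, its periodic points are dense in $U$ and are periodic points of $V$; more generally, whenever $U\subseteq\partial^kV$ the periodic points of $U$ are periodic points of $\partial^kV$, so $U\subseteq R(\partial^kV)$. As $V$ is sofic it has finite depth, so there is a largest $k$ with $U\subseteq\partial^kV$. By maximality some $x\in U$ fails to be in $\partial^{k+1}V$; thus $x\in R(\partial^kV)$ contains a synchronizing word $s$ of $R(\partial^kV)$. Put $\alpha$ for the $\sim$-class of $s$ and $Y_c:=(\partial^kV)_{(\alpha,0)}$, an irreducible component of $V$ at level $k$. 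To verify (a),(b), take a cover of $U$ and recode it via Lemma~\ref{08-05-24c} to an irreducible graph $G$ with an edge-labelling $\mathcal{L}$ such that $\mathcal{L}_\infty(X_G)=U$. Given any $w\in\mathcal{B}(U)=\mathcal{B}(X_G)$, pick paths labelled $w$ and $s$ in $G$ and splice them into one cycle using irreducibility of $G$; the periodic point of $U$ carried by this cycle contains $w$ and contains $s$ (an element of $\alpha$) with bounded gaps, hence lies in $Y_c$. Taking $w=s$ gives a point of $U\cap Y_c$, i.e.\ (b); letting $w$ range over $\mathcal{B}(U)$ gives $\mathcal{B}(U)\subseteq\mathcal{B}(Y_c)=\mathcal{B}(\overline{Y_c})$, i.e.\ (a). For uniqueness, let $Y_c'=(\partial^{k'}V)_{(\alpha',0)}$ also satisfy (a),(b). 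From (a), $U\subseteq\overline{Y_c'}\subseteq\partial^{k'}V$, so $k'\le k$; from (b), a point of $U\cap Y_c'$ lies in $\partial^kV$ yet, belonging to $Y_c'$, it contains a synchronizing word of $R(\partial^{k'}V)$ and so cannot lie in $\partial^{k'+1}V$, forcing $k'\ge k$. Hence $k'=k$, and the point $p\in U\cap Y_c$ found above lies in $\overline{Y_c'}$, so an occurrence of $s\in\alpha$ in $p$ is a word of $Y_c'$ and thus appears in some $z\in Y_c'$ flanked by elements of $\alpha'$; since these words are synchronizing in $R(\partial^kV)$ one gets $s\sim(\text{an element of }\alpha')$, i.e.\ $\alpha=\alpha'$ and $Y_c=Y_c'$.

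\textbf{Part (2).} Let $x\in\Rec_n(U)$, so $x$ is receptive for $U$; by Theorem~\ref{16-03-23a} and Lemma~\ref{08-05-24c} there is an irreducible edge-labelled graph $(G_0,\mathcal{L})$ with $\mathcal{L}_\infty(X_{G_0})=U$ and a length-$n$ cycle $\gamma_0$ with $\mathcal{L}(\gamma_0)=x_{[0,n)}$. By part (1) there is a point $p\in U\cap Y_c$; since $Y_c=(\partial^kV)_{(\alpha,0)}$, $p$ contains a word $s\in\alpha$, and the elements of $\alpha$ are synchronizing words of $\overline{Y_c}$ (part of the structure theory of \cite{T}; alternatively, for $a,b\in\mathcal{B}(\overline{Y_c})=\mathcal{B}(Y_c)$ with $as,sb\in\mathcal{B}(\overline{Y_c})$, splice two points of $Y_c$ across an occurrence of $s$ and check, using that $s$ is synchronizing in $R(\partial^kV)$, that the resulting point still lies in $Y_c$ and contains $asb$). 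Thus $s\in\mathcal{B}(U)$ is synchronizing for $\overline{Y_c}$; it labels a path $\tau$ in $G_0$, and irreducibility of $G_0$ produces paths $\rho_1,\rho_2$ making $\tau\rho_1\gamma_0^{\,k}\rho_2\tau$ a cycle for every $k\ge0$, so $sc_1x_{[0,n)}^{\,k}c_2s\in\mathcal{B}(U)\subseteq\mathcal{B}(\overline{Y_c})$ for all $k$, where $c_i=\mathcal{L}(\rho_i)$. Then $\mu_1:=sc_1$ and $\mu_2:=c_2s$ contain $s$, hence are synchronizing for $\overline{Y_c}$, and $\mu_1x_{[0,n)}^{\,k}\mu_2\in\mathcal{B}(\overline{Y_c})$ for all $k$; so $x$ is receptive for $\overline{Y_c}$. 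As the least period of $x$ does not depend on the ambient shift, $x\in\Rec_n(\overline{Y_c})$.

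\textbf{Part (3).} The factor code $\pi:W\to U$ is a cover of $U$, so Lemma~\ref{03-06-25}(a) gives that $\p(U)$ divides $\p(W)$, and $\p(W)$ equals the global period of $W$. By part (2), $\{n:\Rec_n(U)\neq\emptyset\}\subseteq\{n:\Rec_n(\overline{Y_c})\neq\emptyset\}$, hence $\p(\overline{Y_c})$, being the gcd of the larger set (Definition~\ref{15-06-25}), divides $\p(U)$. Chaining the divisibilities, $\p(\overline{Y_c})$ divides the global period of $W$; coarsening the canonical cyclic partition of $W$ then exhibits $W$ as $\p(\overline{Y_c})$-periodic (Definition~\ref{p-periodic}).

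The main obstacle is part (1): the bookkeeping with the derived shifts $\partial^kV$ (that $U\subseteq R(\partial^kV)$ whenever $U\subseteq\partial^kV$, that a maximal such $k$ exists, that a point of $U$ outside $\partial^{k+1}V$ supplies the class $\alpha$) and the uniqueness argument through the equivalence relation $\sim$ on synchronizing words. A secondary delicate point, needed in part (2), is the fact that synchronizing words of $R(\partial^kV)$ in the class $\alpha$ remain synchronizing for the smaller shift $\overline{Y_c}$.
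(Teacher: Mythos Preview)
Your proof is correct and, for parts (1) and (3), essentially identical to the paper's: both locate the maximal $k$ with $U\subseteq\partial^kV$, pick a synchronizing word of $R(\partial^kV)$ lying in $U$ to define $Y_c$, and run the same uniqueness and divisibility arguments.

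The one genuine difference is part (2). The paper argues directly from the definition of receptive: given synchronizing words $u,v$ in $U$ with $ux_{[0,n-1]}^iv\in\mathcal{B}(U)$ for all $i$, it picks a synchronizing word $w$ of $\overline{Y_c}$, uses irreducibility of $\overline{Y_c}$ to find $s,t$ with $wsu,vtw\in\mathcal{B}(\overline{Y_c})$, and concludes $wsux_{[0,n-1]}^ivtw\in\mathcal{B}(\overline{Y_c})$; the words $wsu,vtw$ then witness receptivity in $\overline{Y_c}$. You instead invoke Theorem~\ref{16-03-23a} to realize $x$ by a cycle in a cover of $U$, and then import a synchronizing word $s$ of $\overline{Y_c}$ that already lies in $\mathcal{B}(U)$ (your ``secondary delicate point''), so that all the concatenation takes place inside $\mathcal{B}(U)$ via the graph. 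The paper's route is shorter and avoids Theorem~\ref{16-03-23a} entirely; your route is more explicit about why the gluing succeeds, and your splicing sketch correctly verifies that words of $\alpha$ remain synchronizing for $\overline{Y_c}$. Both arrive at the same conclusion by closely related means.
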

\begin{proof}
(1). Assume first that $Y_c$ is an irreducible component in $Y$ such that (a) and (b) hold. Then $U \subseteq \partial^k V$, where $k$ is the level of $Y_c$; in fact, $U \subseteq R(\partial^kV)$ since $U$ is irreducible. Because of (b), $U$ contains a synchronizing word for $R(\partial^kV)$ and $U$ is therefore not contained in $\partial^{k+1}V$. It suffices therefore to show that when $k$ is the largest integer for which $U \subseteq \partial^k V$, there is a unique irreducible component at level $k$ for which (a) and (b) hold. For this note that $U \subseteq R(\partial^kV)$ and that $U$ must contain a synchronizing word $w$ for  $R(\partial^kV)$ since $U \nsubseteq \partial^{k+1}V$. Let $Y_c$ be the unique irreducible component at level $k$ whose language contains $w$, and let $u$ be an arbitrary word in the language of $U$. Since $U$ is irreducible and sofic, there are words $s,t$ in $U$ such that $U$ contains the periodic point $(wsut)^\infty$. This periodic point is contained in some component of $Y$, and since it lies in $R(\partial^kV)$ and contains $w$, this component must be $Y_c$. Thus $(wsut)^\infty \in U \cap Y_c \neq \emptyset$ and hence (b) holds. Since $u$ was an arbitrary word in $U$, so does (a). For the uniqueness, assume that $Y_{c'}$ is a component at level $k$ such that $U \cap Y_{c'} \ne  \emptyset$. Then $U \cap Y_{c'}$ contains a word $w'$ which is synchronizing for $R(\partial^k V)$. Since $U$ is irreducible this implies that $w$ and $w'$ are equivalent in $S(\partial^k V)$, and hence that $Y_{c'} = Y_c$.

(2). Let $x \in \Rec_n(U)$, with synchronizing words $u,v$ in $U$ s.t. for all $i$, $ux_{[0,n-1]}^iv$ is in the language of $U$. Let $w$ be a synchronizing word in $\overline{Y_c}$. By irreducibility of $\overline{Y_c}$, there are words $s,t$ s.t. $wsu, vtw$ are in the language of $\overline{Y_c}$. Then $wsux_{[0,n-1]}^ivtw$ is in the language of $\overline{Y_c}$ for all $i$. Since $w$ is synchronizing in $\overline{Y_c}$, $x \in \Rec_n(\overline{Y_c})$.

(3).  By (2), $\p(\overline{Y_c})$ divides $\p(U)$, and $\p(U)$ divides $\p(W)$ by (a) of Lemma \ref{03-06-25}. Hence $\p(\overline{Y_c})$ divides $\p(W)$, and therefore $W$ is $\p(\overline{Y_c})$-periodic.
\end{proof}


The following lemma, which is based on Lemma~\ref{lemma_x} above, is key, to the results in this section. It says, among other things, that if a subshift $Z$ factorizably embeds in an irreducible sofic shift $Y$, then $Z$ must actually embed in the closure of exactly one of the (finitely many) irreducible components $Y_c$ of $Y$, in such a way that all of the periodic points of $Z$ map to receptive periodic points of
$\overline{Y_c}$.

\begin{lemma}\label{20-03-23d} Let $Z$ be a subshift and $\phi : Z \to Y$ a factorizable embedding, where $Y$ is an irreducible sofic shift. There is an irreducible component $Y_c$ of $Y$ such that $\phi(Z) \subseteq \overline{Y_c}$, $Z$ is $\text{per}(\overline{Y_c})$-periodic and $\phi(Q_n(Z)) \subseteq \Rec_n(\overline{Y_c})$ for all $n \in \mathbb N$.
\end{lemma}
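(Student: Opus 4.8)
The setup gives an irreducible SFT $S$, a sliding block code $\pi : S \to Y$, and an embedding $\psi : Z \to S$ with $\phi = \pi \circ \psi$. The first move is to bring in Lemma~\ref{lemma_x} with the inclusion $\psi(Z) \subseteq S$; but $\psi(Z)$ need not be irreducible, so instead I would first work with $S$ itself. Since $S$ is an irreducible SFT, its image $\pi(S) \subseteq Y$ is an irreducible sofic subshift of $Y$, so Lemma~\ref{lemma_x}(1), applied to $U := \overline{\pi(S)}$ (irreducible sofic) inside $V := Y$, produces the unique irreducible component $Y_c$ of $Y$ with $\pi(S) \subseteq \overline{Y_c}$ and $\pi(S) \cap Y_c \neq \emptyset$. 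In particular $\phi(Z) = \pi(\psi(Z)) \subseteq \pi(S) \subseteq \overline{Y_c}$, giving the first claim. (One should double-check the mild point that $\pi(S)$ is sofic: a sliding block code image of an SFT is sofic, and an image of an irreducible subshift is irreducible, so $\overline{\pi(S)}$ is an irreducible sofic shift, which is what Lemma~\ref{lemma_x} requires.)

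**$\text{per}(\overline{Y_c})$-periodicity of $Z$.** Here I would use Lemma~\ref{lemma_x}(3): since $S$ is an irreducible SFT and $\pi : S \to \overline{\pi(S)} =: U$ is a factor code onto an irreducible sofic shift with $U \subseteq \overline{Y_c}$ (and $U \cap Y_c \neq \emptyset$), conclusion (3) says $S$ is $\text{per}(\overline{Y_c})$-periodic. Now $\psi : Z \to S$ is an embedding, and an embedding into a $p$-periodic subshift forces the domain to be $p$-periodic: pull back the cyclic partition of $S$ through $\psi$ exactly as in the necessity part of Proposition~\ref{prop:irreducible_emb}. (Alternatively one can invoke Proposition~\ref{prop:irreducible_emb} directly, or just observe $\text{per}(\overline{Y_c})$ divides $\text{per}(S)$ and $S$ is $\text{per}(S)$-periodic, hence also $\text{per}(\overline{Y_c})$-periodic since that integer divides $\text{per}(S)$; then transfer through $\psi$.) This yields the second claim.

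**Periodic points map to receptive points.** Let $z \in Q_n(Z)$, so $z$ is a periodic point of least period $n$ in $Z$. Then $\psi(z)$ is a periodic point in the irreducible SFT $S$; its least period $n'$ divides $n$, but since $\psi$ is injective and commutes with $\sigma$, in fact $n' = n$, so $\psi(z) \in Q_n(S)$. Because $S$ is an irreducible SFT, every periodic point of $S$ is receptive by Proposition~\ref{receptive_other_characterization}(3), so $\psi(z) \in \Rec_n(S)$. Now apply Theorem~\ref{16-03-23a} (or directly the definition together with Lemma~\ref{11-03-23b}) followed by Lemma~\ref{lemma_x}(2): $\pi : S \to U$ is a factor code from an irreducible SFT, $\psi(z)$ is a periodic point of least period $n$ in $S$ with image $\pi(\psi(z)) = \phi(z)$, and a receptive periodic point maps under a factor code to a receptive periodic point of the same least period — more precisely, $\phi(z)$ is a periodic point of $U$ that is the image under a factor code from an irreducible SFT of a periodic point of the same least period, so $\phi(z) \in \Rec_n(U)$ by Theorem~\ref{16-03-23a}. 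Finally Lemma~\ref{lemma_x}(2) gives $\Rec_n(U) \subseteq \Rec_n(\overline{Y_c})$, so $\phi(z) \in \Rec_n(\overline{Y_c})$. Hence $\phi(Q_n(Z)) \subseteq \Rec_n(\overline{Y_c})$ for every $n$, completing the proof.

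**Expected main obstacle.** The substantive content is entirely packaged in Lemma~\ref{lemma_x} and Theorem~\ref{16-03-23a}; the only real care needed is the bookkeeping around ``least period'' — ensuring that passing through the embedding $\psi$ and then through the factor code $\pi$ does not change the least period (injectivity handles the first, Theorem~\ref{16-03-23a}/receptiveness the second) — and checking that the hypotheses of Lemma~\ref{lemma_x} (irreducibility and soficity of $U = \overline{\pi(S)}$, and the inclusion $U \subseteq \overline{Y_c}$ with $U \cap Y_c \neq \emptyset$ so that (2) and (3) apply) are genuinely met. These are routine but are the steps where a slip would occur.
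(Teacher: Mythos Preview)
Your proof is correct and follows essentially the same route as the paper's: apply Lemma~\ref{lemma_x} to $U=\pi(S)\subseteq Y$ to produce $Y_c$, use Theorem~\ref{16-03-23a} to get $\phi(Q_n(Z))\subseteq\Rec_n(\pi(S))$, then Lemma~\ref{lemma_x}(2) and (3) for the remaining claims. Two tiny tidy-ups: $\pi(S)$ is already closed (continuous image of a compact set), so the closure is unnecessary; and the preservation of least period under $\pi\circ\psi$ is simply injectivity of $\phi$ itself, not anything about receptiveness.
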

\begin{proof}
Consider the diagram \eqref{20-03-23c}, with the irreducible SFT $S$. Since $\phi$ and $\psi$ are injective we have
$$
\psi(Q_n(Z)) \subseteq Q_n(S) \ \text{and} \ \pi(\psi(Q_n(Z))) = \phi(Q_n(Z)) \subseteq Q_n(\pi(S)).
$$
It follows therefore from Theorem \ref{16-03-23a} that $\phi(Q_n(Z)) \subseteq \Rec_n(\pi(S))$ for all $n \in \mathbb N$.

Applying Lemma~\ref{lemma_x} part (1) with $U=\pi(S)$ and $V=Y$, we know that there is a unique irreducible component $Y_c$ of $Y$ such that $\pi(S) \subseteq \overline{Y_c}$ and $\pi(S) \cap Y_c \neq \emptyset$. Lemma \ref{lemma_x} part (2) (with $U=\pi(S)$ and $V=Y$) gives that
$\Rec_n(\pi(S)) \subseteq \Rec_n(\overline{Y_c})$. Hence $\phi(Q_n(Z)) \subseteq  \Rec_n(\overline{Y_c})$ for all $n$.

To see that $Z$ is $\text{per}(\overline{Y_c})$-periodic,
note that $S$ is $\p(\overline{Y_c})$-periodic by applying part (3) of Lemma \ref{lemma_x} with $W=S, U=\pi(S)$ and $V=Y$. Hence $\text{per}(S) = k \text{per}(\overline{Y_c})$ for some $k \in \mathbb N$. Since $Z$ embeds into $S$ (via $\psi$), $Z$ must be $k \text{per}(\overline{Y_c})$-periodic and hence also $\text{per} (\overline{Y_c})$-periodic.
\end{proof}

\begin{lemma}\label{20-03-23e}  Let $Z$ be a subshift and $\phi : Z \to Y$ an $S$-factorizable embedding, where $Y$ is an irreducible sofic shift. Then $Z$ is $\p(Y)$-periodic and $\phi(Q_n(Z)) \subseteq \Rec_n(Y)$ for all $n \in \mathbb N$.
\end{lemma}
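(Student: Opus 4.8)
The plan is to derive this directly from Lemma~\ref{20-03-23d}, exploiting that an $S$-factorizable embedding factors through a \emph{surjective} $\pi$. First I would fix a commuting diagram \eqref{20-03-23c} in which $S$ is an irreducible SFT and $\pi : S \to Y$ is surjective, so that $\phi = \pi\circ\psi$ and $\pi(S) = Y$.

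Next I would revisit the proof of Lemma~\ref{20-03-23d} with this particular factorization. There, the irreducible component $Y_c$ of $Y$ is produced by invoking Lemma~\ref{lemma_x}(1) with $U = \pi(S)$ and $V = Y$, and it satisfies $\pi(S) \subseteq \overline{Y_c}$. Since now $\pi(S) = Y$, this gives $Y \subseteq \overline{Y_c}$. On the other hand, every irreducible component of $Y$ is by definition a subset of $Y$ (it is an irreducible component at level $0$ of some derived shift $\partial^k Y \subseteq Y$), so $\overline{Y_c} \subseteq \overline Y = Y$. Hence $\overline{Y_c} = Y$. Plugging $\overline{Y_c} = Y$ into the two conclusions of Lemma~\ref{20-03-23d} — that $Z$ is $\text{per}(\overline{Y_c})$-periodic and that $\phi(Q_n(Z)) \subseteq \Rec_n(\overline{Y_c})$ for all $n$ — yields precisely that $Z$ is $\text{per}(Y)$-periodic and $\phi(Q_n(Z)) \subseteq \Rec_n(Y)$.

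The only subtlety, and it is minor, is bookkeeping: Lemma~\ref{20-03-23d} is phrased as a pure existence statement about \emph{some} irreducible component, so one cannot quote it as a black box and then assert $\overline{Y_c} = Y$; instead one must either re-run its (short) argument with the surjective $\pi$ in hand, or, equivalently, restate it so that it records that $\overline{Y_c}$ equals the closure of the unique irreducible component of $Y$ meeting $\pi(S)$. No new idea beyond the observation $\overline{Y_c}=Y$ is required: the substantive facts — that periodic points of $Z$ are carried to receptive periodic points (via Theorem~\ref{16-03-23a}) and that $Z$ inherits $\text{per}(\overline{Y_c})$-periodicity from $S$ (via Lemma~\ref{lemma_x}(3) and Lemma~\ref{03-06-25}(a)) — are already contained in the proof of Lemma~\ref{20-03-23d}.
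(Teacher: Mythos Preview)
Your proposal is correct and matches the paper's own proof essentially verbatim: the paper simply notes that the lemma follows from Lemma~\ref{20-03-23d} and its proof, since when $\pi$ is surjective one has $\overline{Y_c} = \pi(S) = Y$. Your careful remark that one must revisit the proof of Lemma~\ref{20-03-23d} rather than quote its existence statement as a black box is exactly the point the paper is making with the phrase ``and its proof.''
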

\begin{proof} This follows from Lemma \ref{20-03-23d} and its proof: When $\pi$ is surjective, $\overline{Y_c} = \pi(S) = Y$.
\end{proof}

\begin{lemma}\label{17-06-24b} Let $Z$ be a subshift and $Y$ an irreducible sofic subshift. Assume that $h(Z) < h(Y)$. There is an $S$-factorisable embedding of $Z$ into $Y$ if and only if
\begin{itemize}
\item[(i)] $Z$ is $\text{per} (Y)$-periodic, and
\item[(ii)] $q_{n \text{per}(Y)}(Z) \leq \rec_{n \text{per}(Y)}(Y)$ for all $n \in \mathbb N$.
\end{itemize}
\end{lemma}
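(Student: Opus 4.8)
The plan is to derive necessity immediately from Lemma~\ref{20-03-23e}, and to prove sufficiency by manufacturing, out of the Fischer cover of $Y$, a cover of $Y$ to which Theorem~\ref{Sophie2} applies. For necessity: if $\phi : Z \to Y$ is an $S$-factorizable embedding, then Lemma~\ref{20-03-23e} already gives that $Z$ is $\text{per}(Y)$-periodic (this is (i)) and that $\phi(Q_n(Z)) \subseteq \Rec_n(Y)$ for every $n$; since $\phi$ is injective it restricts to an injection of $Q_{n\text{per}(Y)}(Z)$ into $\Rec_{n\text{per}(Y)}(Y)$, and comparing cardinalities gives (ii). So necessity is essentially free.

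The substance is sufficiency. I would set $p = \text{per}(Y)$ and $\epsilon = \frac12(h(Y) - h(Z)) > 0$ (note $h(Y) > 0$ since $h(Z)\ge 0$), and start from the Fischer cover $\pi_0 : X_0 \to Y$, which by Lemma~\ref{03-06-25}(b) is an irreducible SFT cover of global period $p$, and by Lemma~\ref{growthrate_rec} satisfies $\lim_{n\to\infty}\frac{1}{np}\log r_{np}(\pi_0) = h(Y)$. Since $\limsup_{n\to\infty}\frac{1}{np}\log q_{np}(Z) \le h(Z) < h(Y)-\epsilon$, one can fix $M \in \mathbb N$ with $q_{np}(Z) \le e^{np(h(Y)-\epsilon)} \le r_{np}(\pi_0)$ for all $n \ge M$. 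Then, using hypothesis (ii), for each $n$ with $1\le n < M$ and $q_{np}(Z) > 0$ I would choose $q_{np}(Z)$ distinct receptive periodic points of $Y$ of least period $np$; listing all of these (as $n$ varies) as $w_1,\dots,w_K$, each of least period a multiple of $p$, I would apply Lemma~\ref{11-03-23b} successively, once per $w_j$, beginning with $\pi_0$. This produces a chain of irreducible SFTs $X_0 \hookrightarrow X_1 \hookrightarrow \dots \hookrightarrow X_K =: X$ with compatible factor codes $\pi_j : X_j \to Y$ ($\pi_j\circ\iota_j = \pi_{j-1}$), ending in a cover $\rho := \pi_K : X \to Y$ in which each $w_j$ has a periodic preimage of least period $\per(w_j)$, and whose global period is still $p$ — this uses the ``moreover'' clause of Lemma~\ref{11-03-23b} together with the fact that at each step $\per(w_j)$ is a multiple of the current global period $p$.

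To conclude, I would verify the hypotheses of Theorem~\ref{Sophie2} for $\rho : X \to Y$ and $Z$. We have $h(Z) < h(Y)$, and $Z$ is $p$-periodic by (i). For $n < M$, the $q_{np}(Z)$ periodic preimages of those $w_j$ with $\per(w_j)=np$ persist in $X$ under the composed embeddings (embeddings preserve least period, and $\rho$ restricted along them recovers the relevant $\pi_j$), so $r_{np}(\rho) \ge q_{np}(Z)$; for $n \ge M$, composing the embeddings gives $\iota : X_0 \to X$ with $\rho\circ\iota = \pi_0$, whence $r_{np}(\rho) \ge r_{np}(\pi_0) \ge q_{np}(Z)$. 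Thus $q_{np}(Z) \le r_{np}(\rho)$ for all $n$, and Theorem~\ref{Sophie2} produces an embedding of $Z$ into $Y$ that factors through the cover $\rho$ out of the irreducible SFT $X$; since $\rho$ is surjective, this is an $S$-factorizable embedding.

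The main obstacle I anticipate is the bookkeeping in the inductive construction of $\rho$: one must simultaneously keep the global period equal to $p$ at every stage (which requires each $\per(w_j)$ to be a multiple of the current global period — true since $w_j \in \Rec_{n_j p}(Y)$), ensure that the periodic preimages introduced at early stages survive, with the same least period and the same image, all later applications of Lemma~\ref{11-03-23b}, and check that the composed embedding $X_0 \hookrightarrow X$ intertwines $\pi_0$ with $\rho$, so that $r_{np}(\pi_0)$ is genuinely a lower bound for $r_{np}(\rho)$. None of these points is deep, but this is where all the moving parts have to be aligned.
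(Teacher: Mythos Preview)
Your proposal is correct and follows essentially the same route as the paper's proof: both invoke Lemma~\ref{20-03-23e} for necessity, and for sufficiency start from the Fischer cover (using Lemma~\ref{03-06-25}(b) for the period and Lemma~\ref{growthrate_rec} for the growth rate of $r_{np}$), handle large $n$ by the entropy gap, then apply Lemma~\ref{11-03-23b} finitely many times to take care of small $n$ before invoking Theorem~\ref{Sophie2}. Your version spells out the bookkeeping (persistence of preimages, preservation of global period, $r_{np}(\rho)\ge r_{np}(\pi_0)$ via the composed embedding) more explicitly than the paper does, but there is no substantive difference in strategy.
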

\begin{proof} The necessity of the two conditions, (i) and (ii), follows from Lemma \ref{20-03-23e}.

For the converse, assume that (i) and (ii) hold.  Let $\pi : X \to Y$ be the Fischer cover of $Y$. By (b) of Lemma \ref{03-06-25}, $\text{per} (X) = \text{per}(Y)$. Note that
$$
\limsup_n \frac{1}{n  \text{per}(Y)} \log q_{n \text{per} (Y)} (Z)\leq \limsup_n \frac{1}{n} \log q_n(Z) \leq h(Z)
$$
while $\lim_{n \to \infty} \frac{1}{n \text{per}(Y)} \log r_{n  \text{per}(Y)}(\pi) = h(Y)$ by Lemma \ref{growthrate_rec}. Since $h(Z) < h(Y)$ by assumption, it follows from this and (ii) that there is an $N\in \mathbb N$ such that $q_{n\text{per}(Y)}(Z) \leq  r_{n  \text{per}(Y)}(\pi)$ for all $n \geq N$. Since we assume (ii) we can apply Lemma \ref{11-03-23b} a finite number of times to get an irreducible SFT cover $\pi': X' \to Y$ of $Y$ with $\text{per}(X') =\text{per} (Y)$ such that $q_{n \text{per}(Y)}(\pi') \leq r_{ n \text{per}(Y)}(\pi')$ for all $n \in \mathbb N$. It follows then from Theorem \ref{Sophie3} that $Z$ embeds into $Y$ via $\pi'$.

\end{proof}

\begin{lemma}\label{17-06-24l} Let $Z$ and $Y$ be subshifts; $Y$ irreducible sofic. Assume that $h(Z) < h(Y)$. Consider the following conditions.
\begin{enumerate}
\item $Z$ is $\text{per}(Y)$-periodic and $q_{ n\text{per}(Y)}(Z) \leq \rec_{n\text{per}(Y)}(Y)$ for all $n \in \mathbb N$.
\item There is an S-factorizable embedding $Z \hookrightarrow Y$.
\item There is an I-factorizable embedding $Z \hookrightarrow Y$.
\item  There is a factorizable embedding $Z \hookrightarrow Y$.
\end{enumerate}
Then (1) $\Leftrightarrow$ (2) $\Rightarrow$ (3) $\Rightarrow$ (4) and when $h(\partial Y) < h(Z) < h(Y)$ all four conditions are equivalent. ($\partial Y$ is the derived shift space of $Y$)
\end{lemma}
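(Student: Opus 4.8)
The plan is to establish the implications in the stated order, drawing on the lemmas already available in this subsection, and then to use the extra entropy hypothesis to close the cycle. The equivalence (1)~$\Leftrightarrow$~(2) is nothing but Lemma~\ref{17-06-24b}, which applies because $h(Z)<h(Y)$ is part of the standing hypotheses. For (2)~$\Rightarrow$~(3) I would invoke Remark~\ref{18-06-24e}: still under $h(Z)<h(Y)$, an $S$-factorizable embedding already forces the existence of an $I$-factorizable one. The implication (3)~$\Rightarrow$~(4) is immediate from Definition~\ref{factorizable_def}, since an injective $\pi$ is a permissible choice of the connecting sliding block code, so an $I$-factorizable embedding is a fortiori factorizable.

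It remains to prove (4)~$\Rightarrow$~(1) under the extra assumption $h(\partial Y)<h(Z)$. Given a factorizable embedding $\phi:Z\to Y$, Lemma~\ref{20-03-23d} supplies an irreducible component $Y_c$ of $Y$ with $\phi(Z)\subseteq\overline{Y_c}$, with $Z$ being $\text{per}(\overline{Y_c})$-periodic, and with $\phi(Q_n(Z))\subseteq\Rec_n(\overline{Y_c})$ for all $n$. The key point is to force $\overline{Y_c}=Y$. On one hand $h(\overline{Y_c})\ge h(\phi(Z))=h(Z)>h(\partial Y)$ since $\phi$ is an embedding. On the other hand, every irreducible component of $Y$ of level $k\ge 1$ is contained in $\partial^k Y\subseteq\partial Y$, so its closure is a subshift of $\partial Y$ and has entropy at most $h(\partial Y)$; and because $Y$ is irreducible there is a single $\sim$-class of synchronizing words, hence a unique irreducible component of level $0$, whose closure is all of $Y$ --- this last statement I would verify directly, using that for any word $w\in\mathcal B(Y)$ and any synchronizing word $s$, irreducibility produces a periodic point of $Y$ containing $w$ in which $s$ recurs with bounded gap. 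Comparing entropies forces $Y_c$ to be that level-$0$ component, so $\overline{Y_c}=Y$. Substituting back into the conclusion of Lemma~\ref{20-03-23d}: $Z$ is $\text{per}(Y)$-periodic, and applying $\phi(Q_n(Z))\subseteq\Rec_n(Y)$ with $n$ replaced by $n\text{per}(Y)$, together with injectivity of $\phi$ on $Q_{n\text{per}(Y)}(Z)$, yields $q_{n\text{per}(Y)}(Z)\le\rec_{n\text{per}(Y)}(Y)$ for all $n$, which is exactly~(1). Hence all four conditions are equivalent.

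The hard part will be the identification $\overline{Y_c}=Y$: it requires carefully assembling two structural facts from the theory imported from \cite{T} at the start of the subsection, namely that the closures of the lower-level irreducible components of $Y$ sit inside $\partial Y$, and that the unique level-$0$ component of an irreducible sofic shift is dense. It is precisely here that the hypothesis $h(\partial Y)<h(Z)$ is used in an essential way; without it the component $Y_c$ could be one of the lower-level pieces, and then none of (2), (3), (4) would return information about $\text{per}(Y)$ or $\rec_{n\text{per}(Y)}(Y)$.
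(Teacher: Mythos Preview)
Your proposal is correct and follows essentially the same route as the paper's proof: both use Lemma~\ref{17-06-24b} for (1)~$\Leftrightarrow$~(2), Remark~\ref{18-06-24e} for (2)~$\Rightarrow$~(3), and Lemma~\ref{20-03-23d} together with the entropy hypothesis to force $\overline{Y_c}=Y$ for (4)~$\Rightarrow$~(1). The only difference is that where the paper invokes \cite[Lemma~3.5]{T} to identify the top component and place the others inside $\partial Y$, you supply those structural facts directly from the definitions; this is a perfectly valid substitution.
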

\begin{proof} (1) $\Leftrightarrow$ (2) follows from Lemma \ref{17-06-24b}. The implication (2) $\Rightarrow$ (3) was pointed out in Remark \ref{18-06-24e}, and the implication (3) $\Rightarrow$ (4) is obvious.  Assume now that (4) holds and $h(\partial Y) < h(Z)$. By \cite[Lemma 3.5]{T} and the definition of the derived shift, all components of $Y$, in the sense of \cite{T}, other than the top component are contained in $\partial Y$. Since we assume $h(Z) > h(\partial Y)$, the component $Y_c$ occurring in Lemma~\ref{20-03-23d} can only be the top component whose closure is all of $Y$ by \cite[Lemma 3.5]{T} again. It follows then from Lemma~\ref{20-03-23d}
 that $Z$ is $\text{per} (Y)$-periodic and that $q_{n\text{per}(Y)}(Z) \leq \rec_{n\text{per} (Y)}(Y)$ for all $n \in \mathbb N$. Hence (4) $\Rightarrow$ (1) when $h(Z) < h(\partial Y)$.
\end{proof}

\begin{thm}\label{18-06-24} Let $Z$ and $Y$ be subshifts, $Y$ sofic and irreducible. The following are equivalent:
\begin{itemize}
\item[(a)] There is a factorisable embedding $Z \hookrightarrow Y$.
\item[(b)] There is an $I$- factorisable embedding $Z \hookrightarrow Y$.
\item[(c)] There is an irreducible component $Y_c$ in $Y$ such that either
\begin{itemize}
\item[$\cdot$] $Z$ and $\overline{Y_c}$ are conjugate SFTs, or
\item[$\cdot$] $Z$ is $\p (\overline{Y_c})$-periodic, $q_{n \p (\overline{Y_c})}(Z) \leq  \rec_{n\p (\overline{Y_c})}(\overline{Y_c})$ for all $n$ and $h(Z) < h(\overline{Y_c})$.
\end{itemize}
\item[(d)] There is an irreducible component $Y_c$ in $Y$ and an S-factorizable embedding $Z \hookrightarrow \overline{Y_c}$.
\end{itemize}
 \end{thm}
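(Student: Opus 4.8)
The plan is to prove the cyclic chain of implications $(a)\Rightarrow(c)\Rightarrow(d)\Rightarrow(b)\Rightarrow(a)$, which yields the equivalence of all four statements. The implication $(b)\Rightarrow(a)$ is immediate, since an $I$-factorizable embedding is in particular factorizable. The recurring tool throughout is the fact, recalled just before Lemma~\ref{lemma_x}, that the closure $\overline{Y_c}$ of any irreducible component $Y_c$ of $Y$ is itself an irreducible sofic shift; hence all earlier results of this section --- in particular Lemmas~\ref{20-03-23d} and~\ref{17-06-24b}, Theorem~\ref{Sophie3} and Remark~\ref{18-06-24e} --- may be invoked with $\overline{Y_c}$ playing the role of the target.

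For $(a)\Rightarrow(c)$ I would start from a factorizable embedding $\phi:Z\to Y$, say $\phi=\pi\circ\psi$ with $S$ an irreducible SFT and $\psi:Z\to S$ an embedding, and feed it to Lemma~\ref{20-03-23d}: this produces an irreducible component $Y_c$ with $\phi(Z)\subseteq\overline{Y_c}$, with $Z$ being $\p(\overline{Y_c})$-periodic, and with $\phi(Q_n(Z))\subseteq\Rec_n(\overline{Y_c})$ for all $n$; injectivity of $\phi$ turns the last inclusion into $q_{n\p(\overline{Y_c})}(Z)\le\rec_{n\p(\overline{Y_c})}(\overline{Y_c})$ for all $n$. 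Since $\phi$ embeds $Z$ into $\overline{Y_c}$ we have $h(Z)\le h(\overline{Y_c})$, and I would split into cases. If $h(Z)<h(\overline{Y_c})$, the second alternative of $(c)$ holds outright. If $h(Z)=h(\overline{Y_c})$, then $\phi(Z)$ cannot be a proper subshift of the irreducible sofic shift $\overline{Y_c}$, by \cite[Corollary 4.4.9]{LM}, so $\phi$ is a conjugacy onto $\overline{Y_c}$; combining $\pi(S)\subseteq\overline{Y_c}$ from the proof of Lemma~\ref{20-03-23d} with $\overline{Y_c}=\phi(Z)\subseteq\pi(S)$ gives $\pi(S)=\overline{Y_c}$, so $\pi:S\to\overline{Y_c}$ is a cover, and Theorem~\ref{Sophie3} applies: its condition $(1)$ holds via $\phi$ and $\psi$, its alternative $(b)$ is excluded by the entropy equality, hence its alternative $(a')$ holds, so $Z$ and $\overline{Y_c}$ are conjugate SFTs --- the first alternative of $(c)$.

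For $(c)\Rightarrow(d)$: in the first alternative $\overline{Y_c}$ is itself an irreducible SFT, so a conjugacy $Z\to\overline{Y_c}$ is an $S$-factorizable embedding through $\id_{\overline{Y_c}}$ (take $S=\overline{Y_c}$ and $\psi$ the conjugacy); in the second alternative, Lemma~\ref{17-06-24b} applied with $\overline{Y_c}$ in place of $Y$ produces an $S$-factorizable embedding $Z\to\overline{Y_c}$, since the three listed conditions are precisely the entropy hypothesis $h(Z)<h(\overline{Y_c})$ together with conditions (i) and (ii) of that lemma. For $(d)\Rightarrow(b)$: given an $S$-factorizable embedding $\phi:Z\to\overline{Y_c}$ through a cover $\pi_0:S_0\to\overline{Y_c}$, if $h(Z)<h(\overline{Y_c})$ then $\phi$ is an embedding of $Z$ into $\overline{Y_c}$ factoring through the factor code $\pi_0$, so Theorem~\ref{Sophie2} together with Remark~\ref{18-06-24e} yields an $I$-factorizable embedding $Z\to\overline{Y_c}$; if $h(Z)=h(\overline{Y_c})$ then, exactly as above, $\phi$ is a conjugacy and Theorem~\ref{Sophie3} forces $Z$ and $\overline{Y_c}$ to be conjugate SFTs, so $\phi$ is $I$-factorizable through $\id_{\overline{Y_c}}$. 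In either case, post-composing the resulting $I$-factorizable embedding $Z\to\overline{Y_c}$ with the inclusion $\overline{Y_c}\hookrightarrow Y$ (a sliding block code, and a composition of injective sliding block codes is injective) produces an $I$-factorizable embedding $Z\to Y$, i.e.\ $(b)$.

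The step I expect to be the main obstacle is the borderline case $h(Z)=h(\overline{Y_c})$, which arises in both $(a)\Rightarrow(c)$ and $(d)\Rightarrow(b)$: one must rule out a factorizable embedding that is a conjugacy onto a non-SFT irreducible sofic shift, and the only available handle is the equivalence $(1)\Leftrightarrow(3)$ of Theorem~\ref{Sophie3}, which ultimately rests on the retract results of \cite{Meye} and \cite{PS}. A secondary matter requiring care is the bookkeeping of $Y_c$: one must ensure that the component supplied by Lemma~\ref{20-03-23d} is the one used throughout, and that in the conjugacy case $\pi(S)$ really equals all of $\overline{Y_c}$, since Theorem~\ref{Sophie3} takes a surjective factor code as its input.
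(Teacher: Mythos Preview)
Your proof is correct and follows essentially the same route as the paper's: the same cycle $(a)\Rightarrow(c)\Rightarrow(d)\Rightarrow(b)\Rightarrow(a)$, using Lemma~\ref{20-03-23d}, Lemma~\ref{17-06-24b}, Theorem~\ref{Sophie3}, and Remark~\ref{18-06-24e} in the same roles. Your handling of $(d)\Rightarrow(b)$ is slightly more direct than the paper's (which re-applies the already-established $(a)\Rightarrow(c)$ to descend to a sub-component $Y_{cc}$ of $\overline{Y_c}$), and you spell out explicitly why $\pi(S)=\overline{Y_c}$ in the equal-entropy case of $(a)\Rightarrow(c)$, a point the paper leaves implicit when it invokes Theorem~\ref{Sophie3}.
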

 \begin{proof}  (a) $\Rightarrow$ (c):  Let $\phi :Z \to Y$ be a factorizable embedding such that we have the diagram \eqref{20-03-23c}. It follows from Lemma \ref{20-03-23d} and its proof that there is an irreducible component $Y_c$ of $Y$ such that $Z$ is
	$\p (\overline{Y_c})$-periodic, $\phi(Z) \subseteq \pi(S) \subseteq \overline{Y_c}$ and $q_{n\p(\overline{Y_c})}(Z)
	\leq \rec_{n\p (\overline{Y_c})}(Y)$ for all $n \in \mathbb N$. If $h(Z)<h(\overline{Y_c})$, then we have the second bullet of (c); otherwise, if $h(Z)=h(\overline{Y_c})$, then we have the first bullet of (c), by the implication of $(1)\Rightarrow (3)$ of Theorem \ref{Sophie3}, where we recall that  $\overline{Y_c}$ is sofic and irreducible by Proposition 6.16 in \cite{T}. 

  (c) $\Rightarrow$ (d): If the first item of (c) holds, then so does (d), trivially. That (d) is also true when the second item of (c) holds follows from Lemma \ref{17-06-24b}.

 (d) $\Rightarrow$ (b): First recall that $\overline{Y}_c$ is irreducible and sofic. Then, it follows from the direction (a) $\Rightarrow$ (c), applied to the assumed factorizable embedding $Z \hookrightarrow \overline{Y_c}$, that there is an irreducible component $Y_{cc}$ of $\overline{Y_c}$ such that either $Z$ and $\overline{Y_{cc}}$ are conjugate SFTs, or the second item of (c) holds with $Y_c$ replaced by $Y_{cc}$. In the former case, (b) trivially holds; in the latter case, it follows from Lemma \ref{17-06-24b} and Remark \ref{18-06-24e} that there is an $I$-factorizable embedding from $Z$ to $\overline{Y_{cc}}$, which in particular is also an $I$-factorisable embedding from $Z$ to $Y$, proving (b).

 (b) $\Rightarrow$ (a) is trivial and hence (a) through (d) are equivalent.

 \end{proof}

When assuming $h(Z)> h(\partial Y)$, we immediately have the following corollary.

\begin{cor}\label{18-06-24f} Let $Z$ be a subshift and $Y$ an irreducible sofic shift such that $h(Z) > h(\partial Y)$. Set $p:= \p (Y)$. Then, the following are equivalent:
	\begin{itemize}
		\item[(a)] There is a factorizable embedding $Z \hookrightarrow Y$.
		\item[(b)] There is an $I$- factorisable embedding $Z \hookrightarrow Y$.
		\item[(c)] Either
		\begin{itemize}
			\item[$\cdot$] $Z$ and $Y$ are conjugate SFTs, or
			\item[$\cdot$] $Z$ is $p$-periodic, $q_{n p}(Z) \leq  \rec_{n p}(Y)$ for all $n$ and $h(Z) < h(Y)$.
		\end{itemize}
		\item[(d)] There is an S-factorizable embedding $Z \hookrightarrow Y$.
		
	\end{itemize}
\end{cor}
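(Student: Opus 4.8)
The plan is to obtain this as a direct consequence of Theorem~\ref{18-06-24}, using the hypothesis $h(Z)>h(\partial Y)$ only to pin down the irreducible component of $Y$ that appears there. The equivalences among ``there is a factorizable embedding'', ``there is an $I$-factorizable embedding'', condition (c) of Theorem~\ref{18-06-24} (an irreducible component $Y_c$ with $Z$ conjugate to the SFT $\overline{Y_c}$, or $Z$ being $\p(\overline{Y_c})$-periodic with the receptive-periodic-point bounds and $h(Z)<h(\overline{Y_c})$), and condition (d) of Theorem~\ref{18-06-24} (an $S$-factorizable embedding $Z\hookrightarrow\overline{Y_c}$) are already available, so I only need to show that under $h(Z)>h(\partial Y)$ the component $Y_c$ in those two conditions is forced to be the top component of $Y$, whose closure is all of $Y$ and whose period is $p=\p(Y)$.

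The key step, carried out exactly as in the last part of the proof of Lemma~\ref{17-06-24l}, is the following: by \cite[Lemma 3.5]{T} together with the definition of the derived shift, every irreducible component of $Y$ other than the top component is contained in $\partial Y$, hence its closure is a subshift of $\partial Y$ and has entropy at most $h(\partial Y)$; while the closure of the top component is $Y$ itself, again by \cite[Lemma 3.5]{T}. Now in both alternatives of Theorem~\ref{18-06-24}(c) one has $h(\overline{Y_c})\ge h(Z)$ (in the conjugate-SFT case $h(\overline{Y_c})=h(Z)$, in the other case $h(\overline{Y_c})>h(Z)$), and likewise an $S$-factorizable embedding $Z\hookrightarrow\overline{Y_c}$ in Theorem~\ref{18-06-24}(d) forces $h(\overline{Y_c})\ge h(Z)$. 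Since $h(Z)>h(\partial Y)$, in every case $h(\overline{Y_c})>h(\partial Y)$, so $Y_c$ cannot be a non-top component; it is the top component, so $\overline{Y_c}=Y$ and $\p(\overline{Y_c})=\p(Y)=p$.

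With that identification in hand I would finish by translating back and forth. Assuming (a), Theorem~\ref{18-06-24} gives the component $Y_c$, which by the previous paragraph satisfies $\overline{Y_c}=Y$, and its condition (c) becomes precisely condition (c) of the corollary; conversely, given (c) of the corollary, taking $Y_c$ to be the top component recovers Theorem~\ref{18-06-24}(c), hence (a) and (b). Similarly, (a) is equivalent by Theorem~\ref{18-06-24} to the existence of some component $Y_c$ with an $S$-factorizable embedding $Z\hookrightarrow\overline{Y_c}$; since $\overline{Y_c}=Y$ this is exactly (d) of the corollary, and the reverse implication is the special case $Y_c=$ top component. I expect the only point needing care, rather than a genuine obstacle, is the bookkeeping of the entropy inequalities in the conjugate-SFT alternative, where one uses $h(\overline{Y_c})=h(Z)>h(\partial Y)$ rather than a strict inequality to still conclude that $Y_c$ is the top component; everything else is a mechanical specialization of Theorem~\ref{18-06-24}.
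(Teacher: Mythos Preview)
Your proposal is correct and follows essentially the same approach as the paper's proof: use the entropy hypothesis $h(Z)>h(\partial Y)$ together with the fact that all non-top components lie in $\partial Y$ to force $\overline{Y_c}=Y$, and then read off the corollary as a specialization of Theorem~\ref{18-06-24}. The paper's argument is a terse two-sentence version of what you wrote, and your added care about the entropy bookkeeping in the conjugate-SFT alternative is accurate but not strictly needed beyond what you already observed.
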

\begin{proof} When $h(Z) > h(\partial Y)$, the entropy of $\overline{Y_c}$ will be strictly smaller than $h(Z)$ for all other irreducible components $Y_c$ than the top component in $Y$. Therefore, if either (c) or (d) in Theorem \ref{18-06-24} holds, it follows that $ \overline{Y_c} = Y$. Thus, the corollary follows from Theorem \ref{18-06-24}.
\end{proof}

In general, the conditions in Theorem \ref{18-06-24} will not imply the existence of an $S$-factorizable embedding, but nonetheless we can give the following characterization of when such an embedding exists.

\begin{thm}\label{05-06-25e} Let $Z$ and $Y$ be subshifts, $Y$ sofic and irreducible. The following conditions (a) and (b) are equivalent:
\begin{itemize}
\item[(a)]  There is an $S$-factorisable embedding $Z \hookrightarrow Y$.
\item[(b)]
\begin{itemize}
\item[$\cdot$] $Z$ and $Y$ are conjugate SFTs, or
\item[$\cdot$] $Z$ is $\text{per} (Y)$-periodic, $q_{n \text{per}(Y)}(Z) \leq  \rec_{n\text{per}(Y)}(Y)$ for all $n$ and $h(Z) < h(Y)$.
\end{itemize}
\end{itemize}
\end{thm}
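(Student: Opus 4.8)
The plan is to assemble the statement from results already proved: Lemma~\ref{20-03-23e} supplies the necessity of the $\text{per}(Y)$-periodicity and periodic-point inequalities, Lemma~\ref{17-06-24b} supplies the equivalence in the strictly sub-entropic range $h(Z)<h(Y)$, and Theorem~\ref{Sophie3} disposes of the boundary behaviour when a factorizable embedding happens to be onto. The only structural step is a dichotomy: a given $S$-factorizable embedding $\phi:Z\to Y$ is either surjective or not.

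For (a) $\Rightarrow$ (b), suppose $\phi:Z\to Y$ is $S$-factorizable, witnessed by an irreducible SFT $S$, a surjective factor code $\pi:S\to Y$, and an embedding $\psi:Z\to S$ with $\phi=\pi\circ\psi$. By Lemma~\ref{20-03-23e}, $Z$ is $\text{per}(Y)$-periodic and $\phi(Q_n(Z))\subseteq\Rec_n(Y)$ for every $n$; injectivity of $\phi$ then forces $q_n(Z)\le\rec_n(Y)$ for all $n$, and in particular $q_{n\text{per}(Y)}(Z)\le\rec_{n\text{per}(Y)}(Y)$. If $\phi$ is not onto, then $\phi(Z)$ is a proper subshift of the irreducible sofic shift $Y$, so $h(Z)=h(\phi(Z))<h(Y)$ by Corollary~4.4.9 of \cite{LM}, and the second alternative of (b) holds. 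If $\phi$ is onto, it is a conjugacy, so $h(Z)=h(Y)$; now $\pi:S\to Y$ is a factor code from an irreducible SFT onto $Y$ and condition~(1) of Theorem~\ref{Sophie3} holds for it, hence so does condition~(3); since condition~(3)(b) requires $h(Z)<h(Y)$ it is excluded, so condition~(3)(a') holds and in particular $Z$ and $Y$ are conjugate SFTs, which is the first alternative of (b). (Alternatively, one can argue directly that $r:=\psi\circ\phi^{-1}\circ\pi$ is a retraction of $S$ with image $\psi(Z)$, so $\psi(Z)\cong Z\cong Y$ is a retract of an SFT, hence an SFT.)

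For (b) $\Rightarrow$ (a), if $Z$ and $Y$ are conjugate SFTs then $Y$ is an irreducible SFT; taking $S:=Y$, $\pi:=\id_Y$, and $\psi:=\phi$ to be any conjugacy $Z\to Y$ exhibits $\phi$ as an $S$-factorizable embedding of $Z$ into $Y$. If instead $Z$ is $\text{per}(Y)$-periodic with $q_{n\text{per}(Y)}(Z)\le\rec_{n\text{per}(Y)}(Y)$ for all $n$ and $h(Z)<h(Y)$, then hypotheses (i) and (ii) of Lemma~\ref{17-06-24b} are met together with $h(Z)<h(Y)$, and that lemma directly produces the desired $S$-factorizable embedding.

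I do not anticipate a genuine obstacle here, since the weight has been carried by the earlier results; the one point that must be handled carefully is the invocation of Theorem~\ref{Sophie3} in the surjective case, which is legitimate precisely because $S$-factorizability guarantees $\pi$ is surjective. The conceptually substantive parts live in Lemma~\ref{17-06-24b} (which runs through the Fischer cover via Lemma~\ref{growthrate_rec} and repeated use of Lemma~\ref{11-03-23b}) and in the rigidity underlying the equal-entropy case (an $S$-factorizable embedding with $h(Z)=h(Y)$ must be a conjugacy onto $Y$, forcing $Y$ to be an SFT).
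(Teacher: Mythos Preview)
Your proof is correct and follows essentially the same approach as the paper: both directions rest on Lemma~\ref{17-06-24b} for the strict-entropy case and on the retraction/SFT rigidity behind Theorem~\ref{Sophie3} for the equal-entropy case. The only cosmetic difference is that the paper routes the equal-entropy case through Corollary~\ref{18-06-24f} (which itself unwinds to Theorem~\ref{Sophie3}), whereas you apply Theorem~\ref{Sophie3} directly to the given $\pi:S\to Y$; your path is marginally shorter but not substantively different.
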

\begin{proof} $(b) \Rightarrow (a)$ follows as in the proof of $(c) \Rightarrow (d)$ in Theorem \ref{18-06-24} with $Y$ in the place of $Y_c$.  $(a) \Rightarrow (b)$: If $h(Z) < h(Y)$ it follows from Lemma \ref{17-06-24b} that the second item in (b) holds. If $h(Z) = h(Y)$, we are in the situation covered by Corollary \ref{18-06-24f} because $h(Y) > h(\partial Y)$. It follows therefore from Corollary \ref{18-06-24f} that $Z$ and $Y$ are conjugate SFTs.

\end{proof}

We conclude this section with the following two examples, showing that factorizable embedding is a (strictly) stronger notion than embedding, and $S$-factorizable embedding is (strictly) stronger than factorizable embedding.

\begin{example}\label{17-02-25}
	Let $Z$ be the intersection of the even shift and the golden mean shift. See Figure \eqref{20-02-25b} for a presentation of $Z$; it's the factor of the edge shift defined by the labeling. Let $Y$ be the even shift.

	\begin{equation}\label{20-02-25b}
	\begin{tikzpicture}[node distance={30mm}, thick, main/.style = {draw, circle}]
\node[main] (1) {};
\node[main] (2) [right of=1] {};
\node[main] (3) [right of=2] {};
\draw[->] (1) to [out=70,in=100,looseness=1] node[above,pos=0.5] {$1$} (2);
\draw[->] (2) to [out=70,in=100,looseness=1] node[above,pos=0.5] {$0$} (3);
\draw[->] (3) to [out=190,in=350,looseness=1] node[below,pos=0.5] {$0$} (2);
\draw[->] (3) to [out=260,in=310,looseness=1] node[below,pos=0.5] {$0$} (1);
 \end{tikzpicture}
 \end{equation}

    Note that $Z$ and $Y$ are both mixing sofic and that $Z$ embeds into $Y$. Now we claim that there is no factorizable embedding from $Z$ to $Y$. By Theorem \ref{18-06-24} ((a) $\iff$ (b)), it suffices to show there is no $I$-factorizable embedding from $Z$ into $Y$. To this end, suppose to the contrary that such an $I$-factorizable embedding exists. Since $Z$ is a subshift of $Y$, there is then an irreducible SFT $U$ such that $Z \subseteq U \subseteq Y$. Thus, for all $n$, $10^n$ and $0^n1$ must be allowed in $U$ and therefore $10^n1$ is allowed in $U$ for all large $n$ (i.e., $n$ greater than the memory of $U$). But this contradicts the fact that $U$ is contained in the even shift. 

\end{example}

\begin{example}

	Let $X=\{0^\infty\}$ and let $Z$ be the same sofic shift as in Example \ref{17-02-25}, the intersection of the even shift and the golden mean shift. Since $0^\infty\in Z$, $X$ clearly embeds factorizably in $Z$. However, noting that $0^\infty$ is the only fixed point in $Z$ and it is not receptive, we have $r_1(Z)=0$ and therefore $1=q_1(X)>r_1(Z)=0$. By Lemma \ref{17-06-24b}, there is no $S$-factorizable embedding from $X$ into $Z$.

\end{example}

\section{Period of irreducible sofic shifts}\label{period}

In this section we investigate the notion of period of an irreducible sofic shift $Y$, which is a crucial part of two of our main results, Theorem \ref{18-06-24} and Theorem \ref{05-06-25e}. We defined the period of $Y$ in Definition~\ref{15-06-25}, as the gcd of periods of receptive periodic points in $Y$ and showed, in Lemma \ref{03-06-25}, that this is the same as the global period of the right Fischer cover of $Y$. A broad generalization, given in the following result, is that eight plausible definitions of the period of $Y$ are all equivalent.

\begin{prop} \label{25-03-23-b}
	Let $Y$ be an irreducible sofic shift. Define:
	\begin{enumerate}
\item[$p_1$:] The global period of any irreducible SFT $X$ such that there is a factor code $X \to Y$ for which some point in $Y$ has a unique pre-image in $X$.
    \item[$p_2$:] The global period of any irreducible SFT which factors onto $Y$ under an almost invertible code.
     \item[$p_3$:]  The global period of the right or left Fischer cover of $Y$.
		\item[$p_4$:] The gcd of least periods of receptive periodic points of $Y$.
		\item[$p_5$:] The gcd of least periods of periodic points of $Y$ which contain a synchronizing word.
		\item[$p_6$:] The unique number $m$ in a collection $D_0,D_1,D_2, \cdots, D_{m-1}$ of distinct closed subsets of $Y$ with the properties that
        \begin{itemize}
            \item[(i)] $Y = \bigcup_{i=0}^{m-1} D_i$,
            \item[(ii)] $\sigma(D_i) = D_{i+1 \bmod m}$, and
            \item[(iii)] $\sigma^m|_{D_i}$ is mixing for one, and hence for all $i$.
            \item[(iv)] $D_i\cap D_j$ has empty interior when $i,j\in \{0,1,\cdots, m-1\}$ and $i\neq j$.
        \end{itemize}
       \item[$p_7$:] The unique number $n$ in a collection $C_0, C_1, C_2, \cdots, C_{n-1}$ of distinct relatively clopen subsets of $\tilde{Y}$, where $\tilde{Y}$ is the set of doubly transitive points, such that
       \begin{itemize}
       \item[(a)] $\tilde{Y}=\bigcup_{i=0}^{n-1} C_i$;
       \item[(b)] $\sigma(C_i)=C_{i+1 \bmod n}$;
       \item[(c)] $\sigma^n$ is mixing on each $C_i$, in the sense that for any two relatively open sets $U, V$ in $C_i$, $(\sigma^n)^k(U)\cap V \neq \emptyset$ for all sufficiently large $k$,
       \item[(d)] $C_i$'s are mutually disjoint.
       \end{itemize}
       \item[$p_8$:]  The minimum of global periods of irreducible SFTs which factor onto $Y$, i.e.,
\begin{align*}
p_8 =\min \bigl\{\p(X): X \ \text{is an irreducible SFT} \\ \text{that factors onto} \ Y\bigr\}.
\end{align*}
\end{enumerate}
    Then $p_1=p_2=p_3=p_4=p_5=p_6=p_7=p_8$.
\end{prop}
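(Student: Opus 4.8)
The plan is to establish the chain of equalities by proving a cycle of implications (or divisibilities) that ties all eight quantities together, leveraging the results already proved in the paper. The natural backbone is: $p_3 = p_4 = p_5$ is essentially Lemma~\ref{03-06-25}(b) together with Proposition~\ref{receptive_other_characterization}(2) (a periodic point containing a synchronizing word is receptive) and a matching argument producing, for each receptive periodic point, a periodic point of the Fischer cover with the same least period — this is Theorem~\ref{16-03-23a} applied to the Fischer cover. Then $p_3 = p_1 = p_2$ follows from Lemma~\ref{05-07-25} / Corollary~\ref{05-07-25d}: any irreducible SFT $X$ admitting a factor code to $Y$ with a fiber of size one (in particular, any almost invertible code, since doubly transitive points then have unique preimages) has $\p(X)$ a multiple of $\p(\text{Fischer cover})$, and conversely the Fischer cover itself is such an $X$ with equality, while Lemma~\ref{05-07-25} forces \emph{all} such $X$ to have the \emph{same} global period as the Fischer cover (not merely a multiple), because the cyclic partition of $X$ pushes forward bijectively onto that of the Fischer cover. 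Finally $p_3 = p_8$: the Fischer cover is an irreducible SFT factoring onto $Y$, so $p_8 \le \p(X_{\text{Fischer}})$, and conversely Lemma~\ref{03-06-25}(a) gives $\text{per}(Y) = p_3$ divides $\p(W)$ for \emph{every} irreducible SFT cover $W$, hence $p_3 \le p_8$; combined with the definition of $p_8$ as a minimum we get equality and, incidentally, that the minimum is attained by the Fischer cover.

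**The topological/dynamical identifications.** For $p_6$ and $p_7$ I would argue as follows. Start from the canonical cyclic partition $X = \sqcup_{i=0}^{p-1} X_i$ of the Fischer cover, where $p = p_3$. Set $D_i := \overline{\pi(X_i)}$ (closures in $Y$) for $p_6$, and $C_i := \pi(X_i) \cap \tilde Y$ for $p_7$, where $\pi$ is the Fischer cover map and $\tilde Y$ is the doubly transitive set. For $p_7$: since $\pi$ is almost invertible, $\pi$ restricts to a bijection on doubly transitive points, the $C_i$ are therefore mutually disjoint and clopen in $\tilde Y$, $\sigma(C_i) = C_{i+1}$ is clear, mixing of $\sigma^p$ on each $C_i$ transfers from mixing of $\sigma^p$ on $X_i$ via this bijection, and $\tilde Y = \bigcup C_i$ because every doubly transitive point is the image of a (necessarily doubly transitive) point of $X$. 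Uniqueness of the number $n$ in such a collection: any two such cyclic partitions of $\tilde Y$ refine a common one and a mixing (hence "prime") piece cannot be further decomposed, so $n$ is forced; this pins $p_7 = p$. For $p_6$: the $D_i$ cover $Y$ since the $X_i$ cover $X$ and $\pi$ is surjective; $\sigma(D_i) = D_{i+1}$ because $\sigma$ is a homeomorphism commuting with closure; $\sigma^p$ is mixing on $D_i$ because $\pi(X_i)$ is dense in $D_i$ and carries the mixing action of $X_i$, so topological mixing passes to the closure; condition (iv), that $D_i \cap D_j$ has empty interior for $i \ne j$, follows because a nonempty open subset of $Y$ contains a doubly transitive point, which lies in exactly one $C_i \subseteq \pi(X_i)$, and $\pi(X_i) \cap \pi(X_j)$ cannot contain a doubly transitive point when $i \ne j$. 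Uniqueness of $m$ is the same prime-decomposition argument as for $p_7$, giving $p_6 = p$.

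**Expected main obstacle.** The routine parts are the entropy/periodic-point counting identities and the straightforward divisibility statements, all of which are immediate consequences of cited lemmas. The delicate point is the \emph{uniqueness} clauses built into the definitions of $p_6$ and $p_7$: one must show that any collection of closed (resp.\ relatively clopen) sets satisfying (i)--(iv) (resp.\ (a)--(d)) necessarily has the \emph{same} cardinality $m$ (resp.\ $n$), so that the "unique number" is well defined and equals $p$. The clean way is to observe that if $\{D_i\}_{i<m}$ and $\{D'_j\}_{j<m'}$ are two such systems, then refining by intersections and discarding empty-interior overlaps produces a common cyclic partition into mixing pieces, and a topologically mixing system under $\sigma^k$ admits no nontrivial further cyclic decomposition into closed pieces with nonempty interiors; hence both $m$ and $m'$ equal the number of pieces in the common refinement. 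The analogous statement for the doubly transitive subset $\tilde Y$ in $p_7$ is where one must be slightly careful about "relatively clopen" versus "closed," but $\tilde Y$ is a $\sigma$-invariant dense $G_\delta$ on which $\sigma$ acts topologically transitively, and the mixing condition (c) combined with (b) forces the cyclic structure to match that coming from the Fischer cover. Once these uniqueness facts are in hand, the seven equalities $p_1 = p_2 = p_3 = p_4 = p_5$, $p_3 = p_6$, $p_3 = p_7$, $p_3 = p_8$ assemble into $p_1 = \cdots = p_8$.
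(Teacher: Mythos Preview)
Your treatment of $p_1=p_2=p_3=p_4=p_8$ is correct and essentially matches the paper: Lemma~\ref{05-07-25} applied symmetrically shows any two irreducible SFT covers with a singleton fiber have the same global period, and Lemma~\ref{03-06-25}(a),(b) handle $p_8$ and $p_4$.

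There is a genuine gap in your argument for $p_3=p_5$. You cite Theorem~\ref{16-03-23a} ``applied to the Fischer cover'' to produce, for each receptive periodic point, a periodic point in the Fischer cover of the same least period. But Theorem~\ref{16-03-23a} only produces \emph{some} irreducible SFT cover with this property, not the Fischer cover; and even if the conclusion held, it would yield $p_3 \mid p_4$, which you already have from Lemma~\ref{03-06-25}(b). What is actually missing is the divisibility $p_5 \mid p_3$. The paper proves this directly: given any cycle $\gamma$ in the Fischer cover graph and a path $\xi$ labeled by a synchronizing word, one forms cycles $\xi x y$ and $\xi x \gamma y$ (using irreducibility) whose labelings are periodic points containing a synchronizing word, with periods differing by $|\gamma|$; hence $p_5$ divides $|\gamma|$ for every cycle $\gamma$, so $p_5 \mid p_3$. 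The reverse divisibility $p_3 \mid p_5$ comes from the fact that a periodic point containing a synchronizing word has a unique $\pi$-preimage in the Fischer cover (Lemma~\ref{19-03-23d}), hence its period is a multiple of $p_3$.

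For $p_6$ and $p_7$ your route genuinely differs from the paper's. The paper invokes results from \cite{T} (Corollary~3.12 and Lemma~3.13 there) to get $p_5=p_6$, and then passes from $p_6$ to $p_7$ by taking $C_i = \operatorname{int}(D_i)\cap\tilde Y$. You instead construct both $D_i=\pi(X_i)$ and $C_i=\pi(X_i)\cap\tilde Y$ directly from the cyclic partition of the Fischer cover. Your existence arguments are fine (note $\pi(X_i)$ is already closed, so the closure is redundant). The advantage of your approach is that it is self-contained and avoids the machinery of irreducible components from \cite{T}; the cost is that you must supply the uniqueness of $m$ in $p_6$ yourself rather than citing \cite{T}. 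Your ``prime decomposition'' sketch for $p_7$ can be made precise exactly as the paper does it: if $\{C_i\}_{i<n}$ and $\{C'_j\}_{j<n'}$ both satisfy (a)--(d), then for each $i$ some $C_i\cap C'_j$ is nonempty and relatively clopen, and since $\sigma^{nn'}$ is mixing on both $C_i$ and $C'_j$ one gets $C_i=C'_j$, whence the partitions agree up to cyclic permutation. For $p_6$ the analogous argument requires a bit more care because the $D_i$ may overlap on sets with empty interior; you would need to argue via interiors (as the paper does when passing from $p_6$ to $p_7$), and at that point your argument and the paper's essentially coincide.
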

\begin{proof}

	$(p_1=p_2=p_3=p_8)$: It follows from Lemma \ref{05-07-25} that the minimum occurring in the definition of $p_8$ is realized by any irreducible SFT cover of $Y$ for which there is a point in $Y$ with a unique pre-image under the corresponding factor code. Such points in $Y$ exist in abundance when the cover is almost invertible, in particular the right or left Fischer cover,  and hence $p_1 = p_2 = p_3= p_8$.

    ($p_5=p_3$): Let $\pi:X \to Y$ be the right Fischer cover.  Then $X = X_H$ for some irreducible graph $H$, $\pi$ is given by a labeling $\mathcal{L}$ of the edges of $H$ and $p_3 = \p(X_H)$.

    Let $\gamma$ be a cycle in $H$. To show $p_5$ divides $\p(X_H)$, it suffices to show that $p_5$ divides $|\gamma|$. 
	To this end, let $u$ be a synchronizing word in $Y$ and let $\xi\in\mathcal{L}^{-1}(u)$, which is a path in $H$. By the irreducibility of $X_H$, there exist paths $x, y$ in $G$ such that $i_H(x)=t_H(\xi), t_H(x)=i_H(\gamma), i_H(y)=i_H(\gamma), t_H(y)=i_H(\xi)$. Now, let $\alpha:=\xi x y$ and $\beta=\xi x \gamma y$.  Both $\mathcal{L}(\alpha^\infty)$ and $\mathcal{L}(\beta^\infty)$ are periodic points in $Y$ which contain the synchronizing word $u$.  Thus, $p_5$ divides $|\alpha|$ and $|\beta|$, and therefore divides their difference $|\gamma|$, as desired.

    It remains to show that $\p(X_H)$ divides $p_5$. This follows from the fact that every periodic point of $Y$ that contains a synchronizing word has a unique pre-image in the right Fischer cover, which is a consequence of Lemma \ref{19-03-23d}.

$(p_4=p_3)$: This is (b) of Lemma \ref{03-06-25}.

$(p_5=p_6):$ It is easy to see that $p_5$ is the gcd of the least periods of periodic points in the unique irreducible component at level $0$ of $Y$. Then, $p_5=p_6$ follows from Corollary 3.12 and Lemma 3.13 in \cite{T}.

$(p_6=p_7):$
We first show that the decomposition $C_0, C_1, \cdots, C_{n-1}$ exists with $n=p_6$. Let $D_0,D_1,\cdots, D_{p_6-1}$ be the sets from the definition of $p_6$. For each $0\leq i\leq p_6-1$, let $U_i$ be the interior of $D_i$. It follows from (i) and the Baire category theorem that $U_i$ is non-empty for at least one $i$, and then from (ii) that $U_i \neq \emptyset$ for all $i$. Set $C_i := U_i \cap \tilde{Y}$.  The conditions (b) and (d) are clear. It follows from (ii) that $\cup_{i=0}^{p_6-1} U_i$ is $\sigma$-invariant. As the union is also open it follows that $\cup_{i=0}^{p_6-1}U_i$ must contain all doubly transitive points and therefore condition (a) holds. For part (c), first note that there exist open sets $U', V'\subseteq U_i$ such that $U=U'\cap \tilde{Y}, V=V'\cap \tilde{Y}$. Since $(D_i, \sigma^{p_6})$ is mixing by (iii), for all sufficiently large $k$, $\sigma^{kp_6}(U') \cap V' \neq \emptyset$. Now,
     \begin{align*}
         \sigma^{kp_6}(U) \cap V = \sigma^{kp_6}(U'\cap \tilde{Y}) \cap (V'\cap \tilde{Y}) = \sigma^{kp_6} (U') \cap V' \cap \tilde{Y} \neq \emptyset
     \end{align*}
     where the last step follows from the fact that $\tilde{Y}$ is dense in $Y$.

As for uniqueness, we show in fact that the decomposition
$$
C_0, C_1, \cdots, C_{n-1}
$$
is unique up to a cyclic permutation. Suppose there is another decomposition $C'_0, C'_1,\cdots, C'_{n'-1}$ such that conditions (a)-(d) of $p_7$ hold. It follows from (a) that for each $i$, $C_i\cap C'_j\neq \emptyset$ for some $j$. Since $(C_i, \sigma^{nn'})$ and $(C_j', \sigma^{nn'})$ are both mixing and $C_i$, $C_j'$ are both relatively clopen, we must have $C_i=C'_j$. Thus, the uniqueness of the decomposition follows from (b).
\end{proof}

\begin{remark}
    We leave it to the reader to verify that for an irreducible sofic shift $Y$, our period $\p(Y)$ agrees with Adler's ``ergodic period'' given in~\cite{AM} (see page 6 and for an alternative version see pages 81-82).
\end{remark}

\begin{remark} \label{mixing_per=1} An irreducible sofic shift $Y$ is mixing iff $\p(Y) = 1$.  The ``if'' is obvious because in this case $Y$ would be a factor of a mixing SFT. For the ``only if,'' observe that if $\p(Y) > 1$, then considering the non-empty open sets $U_0,U_1$ from the proof of $p_6 = p_7$ above, for all multiples $n$ of $\p(Y)$, $\sigma^{n}(U_0) \cap U_1=  U_0 \cap U_1 = \emptyset$, a contradiction to mixing.
\end{remark}

\begin{remark}\label{13-07-25} As shown in the proof of $p_6 = p_7$, the collection of sets $C_i$ from $p_7$ is unique up to cyclic permutation. By Lemma 3.13 in \cite{T} the same holds for the sets $D_i$ in $p_6$. By Lemma \ref{05-07-25} the $D_i$'s are obtained from any cover as the images under the factor code of the sets in the canonical cyclic partition of the cover.	
\end{remark}

To conclude this section, we consider the relationships among various notions of period for irreducible SFTs and irreducible sofic shifts. For an irreducible SFT $X$, its period is the gcd of least periods of periodic points in $X$, which also equals the maximal $p$ such that $X$ is $p$-periodic. For a sofic shift $Y$, however, all these three things can be different:
\begin{enumerate}
    \item The period (which is the gcd of least periods of receptive periodic points) can be different from the gcd of least periods of all periodic points. By Remark \ref{mixing_per=1} any irreducible non-mixing sofic shift with a fixed point is an example of this. One simple example is given by the following labeled graph which also appears in \cite[page 3567, figure 1]{T}.

    \begin{tikzpicture}[scale=0.35]
				\draw [opacity=0] (0,0) grid (30,6);
				\node [circle, draw, thick] (a1) at (10,5) {};
				\node [circle, draw, thick] (a2) at (20,5) {};
				\draw [-stealth, black, thick] (a1) to [out=20, in=160] node[above] {$a$} (a2);
                \draw [-stealth, black, thick] (a2) to [out=200, in=340] node[below] {$a$} (a1);
				\draw [-stealth, black, thick] (a1) to [out=275, in=265] node[below] {$b$}  (a2);
			\end{tikzpicture}

    \item The gcd of least periods of periodic points can be different from the maximal $p$ for which $Y$ is $p$-periodic. See Example \ref{25-03-12-b}.
    \item The period can be different from the maximal $p$ for which $Y$ is $p$-periodic. See Example \ref{25-03-12-b}.
    \end{enumerate}

The following example illustrates both (2) and (3) above.

\begin{example} \label{25-03-12-b}
    Let $Y$ be the irreducible sofic shift described by the following labeled graph.

    \begin{tikzpicture}[scale=0.35]
				\draw [opacity=0] (0,0) grid (30,8);
				\node [circle, draw, thick] (a1) at (0,6) {};
				\node [circle, draw, thick] (a2) at (8,6) {};
				\node [circle, draw, thick] (a3) at (16,6) {};
                \node [circle, draw, thick] (a4) at (24,6) {};
				\draw [-stealth, black, thick] (a1) to [out=20, in=160] node[above] {$a$} (a2);
                \draw [-stealth, black, thick] (a2) to [out=200, in=340] node[below] {$b$} (a1);
				\draw [-stealth, black, thick] (a2) to [out=0, in=180] node[above] {$c$}  (a3);
				\draw [-stealth, black, thick] (a3) to [out=20, in=160] node[above]{$b$}  (a4);
                \draw [-stealth, black, thick] (a4) to [out=200, in=340] node[below] {$a$} (a3);
                \draw [-stealth, black, thick] (a4) to [out=230, in=310] node[below]{$c$}  (a1);
			\end{tikzpicture}

     We claim that
  \begin{enumerate}
   \item[1.]    $Y$ is $p$-periodic only for $p=1$.
   \item[2.]  The $\gcd$ of the periods of the periodic points in $Y$ is 2.
   \item[3.]  $\p(Y)=2$.
\end{enumerate}

    We first prove item 1. Note that any subshift is $1$-periodic. Thus, it suffices to show $Y$ is not $k$-periodic for any $k\geq 2$. But since $Y$ contains a point of period 2, the only possibility would be that $Y$ is 2-periodic, with $Y = Y_0 \cup Y_1$, where $Y_0$ and $Y_1$ are closed and disjoint.
     Let $$x^{(n)}:= (c(ab)^{kn}.(ab)^{kn}ac(ba)^{4kn}b)^\infty,$$ WLOG infinitely many $x^{(n)} \in Y_0$. Since these points accumulate on $(ab)^\infty$, and  $Y_0$ is closed,
      $(ab)^\infty \in Y_0$   and so $(ba)^\infty \in Y_1$. Moreover, since $4kn +2$ is even,  $\sigma^{4kn+2}(x^{(n)}) \in Y_0$, and since these points  accumulate on $(ba)^\infty$, it follows that  $(ba)^\infty \in Y_0$, a contradiction to the disjointness of $Y_0$ and $Y_1$.

      Item 2 is clear from the graph.

      For item 3, first observe from the graph that $ca$ and $cb$ are synchronizing. Thus, with the  exception of $(ab)^\infty$, all periodic points contain synchronizing words and are thus receptive.  But $(ab)^\infty$ is receptive as well because $cb(ab)^nca$ is an allowed word for all
      $n$. Thus, all periodic points are receptive and so Item 3 follows from Item 2. \bigskip

\begin{remark} \label{Sturm}
		If an irreducible SFT $X$ is not $p$-periodic, then $X$ necessarily has a periodic point with period not divisible by $p$. This property fails already for irreducible sofic shifts, as we have seen from Example~\ref{25-03-12-b}. In fact, there exist irreducible subshifts without any periodic points, such as Sturmian shifts, that are not $p$-periodic for any $p\geq 2$. To see this, first recall that any Sturmian shift is uniquely ergodic and, with respect to the unique invariant measure $\mu$, it is isomorphic to an irrational rotation of the circle. It follows that with respect to $\mu$, the shift is totally ergodic, i.e., each power of the shift is ergodic.  But if the shift is $p$-periodic for some $p \ge 2$, then the shift space would decompose into $p$ subsets each of which is invariant under the $p$-th power of the shift and have measure $1/p$, contradicting total ergodicity. Thus, Sturmian shifts are not $p$-periodic for any $p\geq 2$. Note that this implies that the Sturmian shift can not embed into any non-mixing irreducible SFT by Proposition \ref{prop:irreducible_emb}. Moreover, this example shows that in general, no conditions on periodic points can ensure the shift is $p$-periodic. There also exist irreducible subshifts, such as Toeplitz shifts, that are $p$-periodic for infinitely many $p$ (see, for example, \cite[Lemma 2.3]{Wil84}).
	\end{remark}

\end{example}

\section{Almost invertible-factorizable Embeddings}	 \label{sec-AI-factorizable}

{Recall that Theorem \ref{16-03-23a} characterizes receptivity of a periodic point $y$ in an irreducible sofic shift $Y$ by the existence of a cover $\pi:X \to Y$ such that $\pi^{-1}(y)$ has a periodic point of the same least period as $y$. The proof is completely constructive.  However, the construction necessarily yields a
covering SFT $X$
with $h(X) > h(Y)$.  This begs the question of whether one can achieve the same result with $h(X) = h(Y)$. We will show  that the answer is Yes.  In fact, we will show that there is a cover $\pi: X\to Y$ such that Theorem \ref{16-03-23a} holds and $\pi$ is almost invertible.

But first,  in the next example, we show that in general this cannot be done with a right or left-closing factor code  $\pi : X \to Y$; in particular it cannot always be done with the right or left Fischer cover. }

\begin{example}
	Let $Y$ be the sofic shift given by the vertex labeled graph $\mathcal G_1$ in Figure \ref{Fischer_cover_exmp}. 
    The symbols $u$ and $v$ are synchronizing for $Y$ and $ua^kv$ is an allowed word in $Y$ for all $k \in \mathbb N$. Thus, the fixed point $a^\infty$ is receptive. One checks that $\mathcal G_1$ is irreducible, right-resolving and follower-separated. Thus, $\mathcal G_1$ is the right Fischer cover of $Y$. Observe that in $\mathcal G_1$, all bi-infinite paths representing $a^\infty$ are cycles of minimal length at least $2$. Now, by \cite[Proposition 5.1.11]{LM} and \cite[Corollary 3.3.20]{LM}, any right-closing cover of $Y$ must factor through $\mathcal G_1$, so for any right-closing cover of $Y$, the preimage of $a^\infty$ has least period strictly larger than $1$.
	
		%
	
    Let $\mathcal G_2$ be the labeled graph given in Figure \ref{Fischer_cover_exmp}. The reader can verify that $\mathcal G_2$ is irreducible, left-resolving and predecessor-separated. Using the algorithm from the proof of \cite[Theorem 3.4.13]{LM}, one verifies that the labeled graph $\mathcal G_1$ and $\mathcal G_2$ represents the same sofic shift. Alternatively, the reader can directly verify that the set of bi-infinite label sequences of both $\mathcal G_1$ and $\mathcal G_2$ is exactly {the closure of} the set of concatenations of blocks of the form $ua^nv$, $uwa^{2n}v$ and $ua^{2n+1}$ where $n\in \mathbb{N}$. {This can be shown using the fact that the only bi-infinite label sequences in both $\mathcal{G}_1$ and $\mathcal{G}_2$ that are not concatenations of  blocks of the form $ua^nv$, $uwa^{2n}v$ and $ua^{2n+1}$  
    are those that are left or right asymptotic to $a^\infty$.}
    Thus, $\mathcal G_2$ is indeed the left Fischer cover of $Y$.

    Since $a^\infty$ is represented by cycles of length at least $2$ in $\mathcal G_2$, by a similar argument as in the previous paragraph, we conclude that for any left-closing cover of $Y$, the preimages of $a^\infty$ have least period strictly larger than $1$.
	
		\begin{center}
			\begin{tikzpicture}[scale=0.28]
				\draw [opacity=0] (0,0) grid (50,17);
				\node [circle, draw, thick] (a1) at (0,10) {$u$};
				\node [circle, draw, thick] (a2) at (5,14) {$w$};
				\node [circle, draw, thick] (a3) at (10,15) {$a$};
				\node [circle, draw, thick] (a4) at (15,14) {$a$};
				\node [circle, draw, thick] (a5) at (20,10) {$v$};
				\node [circle, draw, thick] (a6) at (6.5,9) {$a$};
				\node [circle, draw, thick] (a7) at (13,9) {$a$};
				\draw [-stealth, black, thick] (a1) to [out=60, in=205]  (a2);
				\draw [-stealth, black, thick] (a2) to [out=20, in=180]  (a3);
				\draw [-stealth, black, thick] (a3) to [out=10, in=150]  (a4);
				\draw [-stealth, black, thick] (a4) to [out=200, in=320]  (a3);
				\draw [-stealth, black, thick] (a4) to [out=340, in=120]  (a5);
				\draw [-stealth, black, thick] (a1) to [out=20, in=140]  (a6);
				\draw [-stealth, black, thick] (a6) to [out=200, in=330]  (a1);
				\draw [-stealth, black, thick] (a6) to [out=20, in=160]  (a7);
				\draw [-stealth, black, thick] (a7) to [out=200, in=340]  (a6);
				\draw [-stealth, black, thick] (a6) to [out=40, in=160]  (a5);
				\draw [-stealth, black, thick] (a7) to [out=350, in=205]  (a5);
				\draw [-stealth, black, thick] (a5) to [out=240, in=300]  (a1);
				
				\node [circle, draw, thick] (b1) at (26,10) {$u$};
				\node [circle, draw, thick] (b2) at (33,14) {$w$};
				\node [circle, draw, thick] (b3) at (39,13) {$a$};
				\node [circle, draw, thick] (b4) at (39,8) {$a$};
				\node [circle, draw, thick] (b5) at (45,10) {$v$};
				\node [circle, draw, thick] (b6) at (30,7) {$a$};
				\node [circle, draw, thick] (b7) at (35,6) {$a$};
				\draw [-stealth, black, thick] (b1) to [out=60, in=190]  (b2);
				\draw [-stealth, black, thick] (b5) to [out=270, in=270]  (b1);
				\draw [-stealth, black, thick] (b2) to [out=10, in=150]  (b3);
				\draw [-stealth, black, thick] (b3) to [out=300, in=60]  (b4);
				\draw [-stealth, black, thick] (b4) to [out=120, in=240]  (b3);
				\draw [-stealth, black, thick] (b4) to  (b5);
				\draw [-stealth, black, thick] (b1) to  (b3);
				\draw [-stealth, black, thick] (b1) to  (b4);
				\draw [-stealth, black, thick] (b1) to [out=340, in=130]  (b6);
				\draw [-stealth, black, thick] (b6) to [out=170, in=300]  (b1);
				\draw [-stealth, black, thick] (b6) to [out=5, in=155]  (b7);
				\draw [-stealth, black, thick] (b7) to [out=190, in=330]  (b6);
				\node at (10, 1) [coordinate, draw, fill=black, label=above: $\mathcal G_1$] {};
				\node at (35.5, 1) [coordinate, draw, fill=black, label=above: $\mathcal G_2$] {};
			\end{tikzpicture}
		\end{center}
		\label{Fischer_cover_exmp}
\end{example}

An important fact that we will use later on is:

\begin{prop} \label{synchro_infinitelyoften}
    Let $Y$ be an irreducible sofic shift and $\pi: X\to Y$ be the Fischer cover of $Y$. For any point $y\in Y$, if synchronizing words appear infinitely often to the left in $y$, then $y$ has a unique $\pi$-preimage.
\end{prop}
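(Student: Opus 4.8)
The plan is to exploit the structural characterization of the right Fischer cover $\mathcal{G}=(G,\mathcal{L})$ as an irreducible, right-resolving, follower-separated presentation of $Y$, together with the identification in Lemma~\ref{19-03-23d} of synchronizing words for $Y$ with \emph{magic} words for $\mathcal{G}$. Since $\mathcal{G}$ presents $Y$, the map $\pi=\mathcal{L}_\infty$ is surjective, so $y$ has at least one preimage and it suffices to show that any two bi-infinite edge paths $\gamma=(\gamma_i)_{i\in\mathbb{Z}}$ and $\gamma'=(\gamma'_i)_{i\in\mathbb{Z}}$ in $G$ with $\mathcal{L}_\infty(\gamma)=\mathcal{L}_\infty(\gamma')=y$ must coincide, i.e. $\gamma_j=\gamma'_j$ for every $j\in\mathbb{Z}$.

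First I would unpack the hypothesis: ``synchronizing words appear infinitely often to the left in $y$'' will be taken to mean that there are integers $b_k\to-\infty$ and $a_k\le b_k$ such that each word $y_{[a_k,b_k]}$ is synchronizing for $Y$. By Lemma~\ref{19-03-23d}, each $y_{[a_k,b_k]}$ is then magic for $\mathcal{G}$. Consequently the subpaths $\gamma_{a_k}\gamma_{a_k+1}\cdots\gamma_{b_k}$ and $\gamma'_{a_k}\gamma'_{a_k+1}\cdots\gamma'_{b_k}$, being two paths in $G$ carrying the same (magic) label $y_{[a_k,b_k]}$, must have the same terminal vertex; write $v_k$ for this common vertex, so that $t_G(\gamma_{b_k})=t_G(\gamma'_{b_k})=v_k$.

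The second step is a forward propagation argument driven by right-resolvingness. Fix $j\in\mathbb{Z}$ and choose $k$ with $b_k<j$. I claim $\gamma_i=\gamma'_i$ for all $i$ with $b_k<i\le j$, by induction on $i$. For $i=b_k+1$: the edges $\gamma_{b_k+1}$ and $\gamma'_{b_k+1}$ both have initial vertex $t_G(\gamma_{b_k})=v_k=t_G(\gamma'_{b_k})$ and both have label $y_{b_k+1}$, so by right-resolvingness of $\mathcal{G}$ they are equal. For the inductive step, if $\gamma_{i-1}=\gamma'_{i-1}$ then $i_G(\gamma_i)=t_G(\gamma_{i-1})=t_G(\gamma'_{i-1})=i_G(\gamma'_i)$ and $\mathcal{L}(\gamma_i)=y_i=\mathcal{L}(\gamma'_i)$, so again right-resolvingness forces $\gamma_i=\gamma'_i$. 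In particular $\gamma_j=\gamma'_j$, and since $j$ was arbitrary, $\gamma=\gamma'$; this gives the uniqueness of the preimage.

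I do not anticipate a genuine obstacle: the argument is short, the content lying entirely in the correct use of Lemma~\ref{19-03-23d} and in the bookkeeping of the rightward induction. The one point that requires care is the reading of the hypothesis — one needs occurrences of synchronizing words whose \emph{right} endpoints tend to $-\infty$, since it is precisely a magic word lying far to the left that pins down the ambient vertex and then, via right-resolvingness, every coordinate of the path to its right; a version in which only the left endpoints were unbounded below would not suffice (and in fact the statement would fail), so I would make sure to phrase the hypothesis accordingly, using also the fact noted earlier that any word containing a synchronizing word is synchronizing.
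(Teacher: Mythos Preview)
Your proof is correct and follows exactly the approach indicated in the paper: invoke Lemma~\ref{19-03-23d} to identify synchronizing words with magic words, then use right-resolvingness of the Fischer cover to propagate agreement of the two preimage paths forward from each magic word. The paper's own proof is a two-line sketch of precisely this argument, so your write-up is simply a careful unpacking of it.
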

\begin{proof}
First recall from Lemma \ref{19-03-23d} that synchronizing words are the same as magic words under the Fischer cover. Then since the Fischer cover is right-resolving, it is not hard to see $y$ has a unique preimage.
\end{proof}

A periodic point $y$ of least period $n$ in an irreducible sofic shift $Y$ is called {\em synchronizing} if it contains a synchronizing word, equivalently if there exists $k\in \mathbb{N}$ such that $(y_{[0,n-1]})^k$ is synchronizing.

We will need to know that the asymptotic growth rate of the number of synchronizing periodic points with least period $np$ is $h(Y)$, where $p$ is the period of $Y$. This is a part of Proposition~\ref{growth} below, {and also follows from \cite[Lemma 3.1 and Theorem 3.2]{T}}

{Let $S_n(Y)$ denote the set of synchronizing periodic points of least period $n$ in $Y$ and 
$
s_n(Y):= \# S_n(Y).
$
Recall that $q_n(Y)$ denotes the number of periodic points with least period $n$ in $Y$,   
$\rec_n(Y)$ denote the number of receptive periodic points of least period $n$ in $Y$, and
for a cover $\pi:X \to Y$, $r_n(\pi)$ denotes the number of periodic points with least period $n$ in $Y$ which have a preimage with the same least period.  }
\begin{prop} \label{growth}
Let $Y$ be an irreducible sofic shift with period $p$. 
\begin{enumerate}
\item[(a)] 
$\rec_n(Y) \le q_n(Y)$ 
\item[(b)] 
For any cover $\pi: X \to Y$, 
$r_n(\pi) \le \rec_n(Y)$ 
\item[(c)] 
For the right or left Fischer cover $\pi$,  
$s_n(Y) \le r_n(\pi)$
\item[(d)] 
For $a_n$ equal to any of $q_n(Y), \rec_n(Y), s_n(Y)$ or $r_n(\pi)$ for the right or left Fischer cover of 
$Y$, 
$$
\limsup_{n \to \infty} \frac{1}{n}\log a_n = 
\lim_{n \to \infty}\frac{1}{pn} \log a_{pn} = h(Y).
$$
\end{enumerate}
\end{prop}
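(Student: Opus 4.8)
The plan is to establish the four parts in order, reducing each bound to a straightforward combinatorial observation and then deducing the asymptotics in (d) from the chain of inequalities together with a known lower bound. For (a), note that $\Rec_n(Y)$ is by definition a subset of $Q_n(Y)$, so $\rec_n(Y) \le q_n(Y)$ is immediate. For (b), let $\pi : X \to Y$ be any cover and suppose $y \in Y$ is a periodic point of least period $n$ having a preimage $x \in X$ with the same least period $n$; then Theorem~\ref{16-03-23a} tells us exactly that $y$ is receptive, so the $r_n(\pi)$ periodic points counted are a subset of $\Rec_n(Y)$, giving $r_n(\pi) \le \rec_n(Y)$. For (c), take $\pi$ to be the right (or left) Fischer cover and let $y$ be a synchronizing periodic point of least period $n$: then $y$ contains a synchronizing word, hence by Lemma~\ref{19-03-23d} a magic word, and since in a synchronizing periodic point the magic word recurs periodically — in particular synchronizing words appear infinitely often to the left — Proposition~\ref{synchro_infinitelyoften} gives that $y$ has a unique $\pi$-preimage $x$. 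That preimage $x$ is necessarily periodic, and I claim its least period is again $n$: if $\sigma^m x = x$ with $m < n$ dividing $n$ then $\sigma^m y = \pi(\sigma^m x) = \pi(x) = y$, contradicting that $n$ is the least period of $y$. Hence $y$ is counted by $r_n(\pi)$, so $s_n(Y) \le r_n(\pi)$.

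For (d) I would first record the chain of inequalities just proved, valid for every $n$:
\[
s_n(Y) \;\le\; r_n(\pi) \;\le\; \rec_n(Y) \;\le\; q_n(Y),
\]
for $\pi$ the right or left Fischer cover. Since $q_n(Y) \le \#\mathcal B_n(Y)$ up to bounded overcounting, the classical fact $\limsup_n \tfrac1n \log q_n(Y) \le h(Y)$ (e.g.\ \cite[Corollary 4.3.8]{LM}) gives the upper bound $\limsup_n \tfrac1n \log a_n \le h(Y)$ for each of the four sequences $a_n$. For the matching lower bound it suffices, by the chain above, to bound $s_{pn}(Y)$ from below: any cycle in the Fischer graph $G$ that passes through a vertex reached by a magic word and has length a multiple of $p$ yields, upon taking its periodic extension, a synchronizing periodic point, and the number of such cycles of length $pn$ grows like $e^{pn\,h(Y)}$. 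Concretely, fix a magic cycle $\gamma$ at a vertex $v$ (it exists by \cite[Proposition 3.3.16]{LM}); for each cycle $a$ at $v$ of length $pn - |\gamma|$, the point $(\gamma a)^\infty$ is synchronizing of period $pn$, and distinct $a$'s with $\gamma a$ not a proper power give distinct least-period-$pn$ points — this is exactly the argument already carried out in the proof of Lemma~\ref{growthrate_rec}. Since the number of cycles at $v$ of length $k$ has exponential growth rate $h(X_G) = h(Y)$, we get $\liminf_n \tfrac{1}{pn}\log s_{pn}(Y) \ge h(Y)$. Combining, $\lim_n \tfrac{1}{pn}\log a_{pn} = h(Y)$ and $\limsup_n \tfrac1n \log a_n = h(Y)$ for all four choices of $a_n$.

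The main obstacle, and the only place where real care is needed, is the lower-bound half of (d): one must make sure that the constructed synchronizing periodic points genuinely have least period exactly $pn$ (not a proper divisor) in sufficient quantity, and that this remains true after passing to least periods rather than periods. This is handled exactly as in Lemma~\ref{growthrate_rec} — discarding the at most $o(e^{pn h(Y)})$ cycles that are proper powers — so no new idea is required, but the bookkeeping (including the use of $p \mid$ the relevant cycle lengths, guaranteed because $p = \p(X_G)$ by Lemma~\ref{03-06-25}(b)) should be spelled out. Everything else is a direct citation of Theorem~\ref{16-03-23a}, Lemma~\ref{19-03-23d}, Proposition~\ref{synchro_infinitelyoften}, and \cite[Corollary 4.3.8]{LM}.
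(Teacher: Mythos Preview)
Your treatment of (a)--(c) matches the paper exactly: (a) is set containment, (b) is Theorem~\ref{16-03-23a}, and (c) is Proposition~\ref{synchro_infinitelyoften}. For (d), both you and the paper get the upper bound from the chain $s_n \le r_n(\pi) \le \rec_n \le q_n$ together with \cite[Corollary 4.3.8]{LM}, and both invoke Lemma~\ref{growthrate_rec} for the growth of $r_{np}(\pi)$.

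The genuine difference is the lower bound for $s_{np}(Y)$. The paper does \emph{not} construct synchronizing periodic points directly; instead it forbids a synchronizing word $u$ to obtain $Y' \subsetneq Y$ with $h(Y') < h(Y)$, so that $q_{np}(Y') \le \epsilon\, q_{np}(Y)$ for large $n$, and observes that every point of $Q_{np}(Y)\setminus Q_{np}(Y')$ contains $u$ and is therefore synchronizing. This gives $s_{np}(Y) \ge (1-\epsilon)q_{np}(Y)$ with no least-period bookkeeping at all. Your route---building $(\gamma a)^\infty$ from cycles $a$ at a magic vertex---also works, but be aware of two small inaccuracies. First, Lemma~\ref{growthrate_rec} does not ``discard proper powers''; it bounds $r_{kp}(\pi)$ below by $q_{kp}(Z_n)$ for an irreducible SFT $Z_n$ of period $p$ and then quotes the standard limit $\lim_k \tfrac{1}{kp}\log q_{kp}(Z_n) = h(Z_n)$. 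If you want to reuse that lemma verbatim, the cleanest move is to note that every point of $Z_n$ contains the magic word $\mathcal L(\gamma)$ and $\pi$ is injective on $Z_n$, whence $q_{kp}(Z_n) \le s_{kp}(Y)$ directly. Second, your fixed-length construction (all $a$ of length $pn - |\gamma|$) does not by itself produce an SFT of period $p$, so if you stick with that version you really do need the explicit proper-power count; that count is routine, but it is additional work not present in Lemma~\ref{growthrate_rec}. Either fix yields a correct proof; the paper's subtraction argument simply sidesteps the issue.
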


\begin{proof}
Item (a) is obvious, and Item (b) follows from Theorem~\ref{16-03-23a}.  Item (c) follows from 
Proposition~\ref{synchro_infinitelyoften}. 

Now, we turn to Item (d). By~\cite[Corollary 4.3.8]{LM},
\begin{equation}
\label{limsup}
\limsup_{n \to \infty} \frac{1}{n}\log q_n(Y)  = h(Y).
\end{equation}
By Lemma \ref{growthrate_rec}, 
$$
\lim_{n \to \infty} \frac{1}{np}\log r_{np}(\pi)  = h(Y).
$$
It follows that for $a_n$ 
 equal to any of $q_n(Y), r_n(Y)$ or $r_n(\pi)$ for the right or left Fischer cover,
 $$
\limsup_{n \to \infty} \frac{1}{n}\log a_n  = 
\lim_{n \to \infty} \frac{1}{np}\log a_{np}  = h(Y).
$$
Finally, we consider $s_n(Y)$.  By~(\ref{limsup}) above and items (a),(b) and (c), we have 
\begin{align} \label{limsup_sn}
\limsup_{n \to \infty} \frac{1}{n}\log s_n(Y)  \le  h(Y).
\end{align}
Let $u$ be a synchronizing word of $Y$. Let $Y'$ be the subshift obtained by further forbidding $u$ from $Y$. Then $h(Y')<h(Y)$ by \cite[Corollary 4.4.9]{LM}. Thus, by \cite[Proposition 4.1.15]{LM}
	$$
	\limsup_{n\to \infty } \frac{1}{np} \log q_{np}(Y') \leq h(Y')<h(Y), 
	$$
   It follows that given any $\epsilon>0$, for sufficiently large $n$,  $q_{np}(Y')\leq \epsilon q_{np}(Y)$.
	Then, since any point in $Q_{np}(Y)\setminus Q_{np}(Y')$ must contain the synchronizing word $u$ (and therefore it is a synchronizing periodic point), we have
	\begin{align} \label{liminf_sn}
	\liminf_{n\to\infty} \frac{1}{np} \log s_{np}(Y)&\geq \liminf_{n\to \infty}\frac{1}{np} \log (q_{np}(Y)-q_{np}(Y')) \\
    &\geq \liminf_{n\to \infty}\frac{1}{np} \log (q_{np}(Y)(1-\epsilon)) = h(Y). \notag 
	\end{align}
    Item (d) then follows by combining (\ref{limsup_sn}) and (\ref{liminf_sn}).
\end{proof}

The following result will be the key step in showing that the cover achieved in 
Theorem~\ref{16-03-23a} can be achieved by 
an almost invertible cover $\pi:X \to Y$, in particular a cover with $h(X) = h(Y)$.  See 
Corollary~\ref{equal_entropy} below. 

\begin{prop} \label{AIcover_sofic}
Let $Y$ be an irreducible sofic shift with a receptive periodic point $\xi$ of least period $n$ and $h(Y)>0$. Then there exists an irreducible sofic $\hat{Y}$ and an almost invertible code $\hat{\pi}: \hat{Y}\to Y$ such that 
\begin{enumerate}
	\item[1.] there is a synchronizing periodic point $\hat{\xi}\in \hat{Y}$ such that $\hat{\pi}(\hat \xi)= \xi$ and $\per(\hat{\xi})=\per(\xi)$;
	
	\item[2.] for any periodic point $\eta\in Y$ with $\eta \neq \xi $, the following hold: 
	\begin{itemize}
		\item[(a)] $\eta$ has a unique $\hat{\pi}$-preimage (call it $\tilde{\eta}$); and 
		\item[(b)] if $\eta$ is synchronizing in $Y$, then $\tilde{\eta}$ is synchronizing in $\hat{Y}$.
	\end{itemize}	
\end{enumerate}
\end{prop}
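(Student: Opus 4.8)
\medskip

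\noindent\emph{Proof proposal.}
The plan is to obtain $\hat Y$ by surgery on the right Fischer cover of $Y$. By Lemma~\ref{08-05-24c} we may assume $Y=X_{\mathcal G}$ for its right Fischer cover $\mathcal G=(G,\mathcal L)$, so $X_G$ is an irreducible SFT, $\mathcal L$ is right-resolving, $\mathcal L_\infty$ is almost invertible, and — since $h(Y)>0$ — there is a magic word $m$ for $\mathcal G$ by \cite[Proposition 3.3.16]{LM}, which is synchronizing for $Y$ by Lemma~\ref{19-03-23d}. Since $\xi$ is receptive, Proposition~\ref{receptive_other_characterization}(1) furnishes synchronizing words $u,v$ with $u\,\xi_{[0,n-1]}^{\,k}\,v\in\mathcal B(Y)$ for all $k\ge 1$; enlarging them we may assume each of $u,v$ contains $m$, hence is magic for $\mathcal G$, so that all paths in $G$ labelled $u$ terminate at a single vertex $q$, and we fix a path $\beta$ in $G$ labelled $v$. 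Let $\gamma$ be the cycle in $G$ on which the (unique, by right-resolvingness) $\xi$-continuation out of $q$ is eventually trapped; its label is a power of $\xi_{[0,n-1]}$. The surgery carves out of $G$ the edges realising this ``$u$–$v$ context'' run of $\xi_{[0,n-1]}$ — after an out-/in-splitting of the vertices of $\gamma$ separating this context from the others — and grafts in their place a single fresh simple $n$-cycle $C=(c_0\to\cdots\to c_{n-1}\to c_0)$ with $\hat{\mathcal L}$-labels $\xi_0,\dots,\xi_{n-1}$, together with a fresh edge from $q$ into $C$ and a fresh path out of $C$ labelled $v$. Call the result $\hat{\mathcal G}=(\hat G,\hat{\mathcal L})$ and put $\hat Y:=X_{\hat G}$, an irreducible SFT, and $\hat\pi:=\hat{\mathcal L}_\infty:\hat Y\to Y$.

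\medskip

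The substance of the argument, and the step I expect to be the main obstacle, is to verify that $\hat{\mathcal G}$ has three properties simultaneously. (i) $\hat{\mathcal G}$ presents exactly $Y$: every label sequence through $C$ has the form $\cdots u\,\xi_{[0,n-1]}^{\,k}\,v\cdots$, or $\xi$, or a one-sided-asymptotic variant, and all of these lie in $Y$ by receptivity and by taking limits; and no label of $Y$ is lost, since the only $\xi_{[0,n-1]}$-runs affected are those sitting between a $u$ and a $v$, which are re-routed through $C$. (ii) $\hat\pi$ is finite-to-one, equivalently $h(\hat Y)=h(Y)$: the splitting steps are conjugacies, and the carve-out-and-graft must be arranged to leave the Perron eigenvalue fixed — the point being that the old and the new ``$u$–$v$ detour'' are both rigid, zero-entropy pendants anchored at $q$ and at the fixed $v$-path. (iii) $\hat\pi$ is almost invertible: $\hat{\mathcal L}$ stays right-resolving, and once the initial edge of the ``$u$–$v$ context'' $\xi$-run out of $q$ has been removed the fresh edge into $C$ is the unique edge with its label, so a short word (e.g.\ $u$ followed by the appropriate letter of $\xi_{[0,n-1]}$) is magic for $\hat{\mathcal G}$. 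Doing (i) and (ii) together — performing the carve-out so that the presented shift is still $Y$ \emph{and} the entropy is unchanged — is the delicate point; the natural way to organise it is as a state-splitting followed by the minimal grafting forced by the magic words $u,v$, and the combinatorial bookkeeping of ``which context'' is being modified is where care is needed.

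\medskip

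Granting $\hat{\mathcal G}$, the stated conclusions follow readily. For item~1, let $\hat\xi\in\hat Y$ be the periodic point tracing the cycle $C$; then $\hat\pi(\hat\xi)=\xi$, and $\per(\hat\xi)=n$, since a strictly smaller period of $\hat\xi$ would push forward under $\hat\pi$ to a period of $\xi$ smaller than its least period $n$; and $\hat\xi$ is synchronizing in $\hat Y$ because $\hat Y$ is an SFT, so by \cite[Theorem 2.1.8]{LM} the block $\xi_{[0,n-1]}^{\,k}$ is synchronizing for $\hat Y$ once $k$ is large. For item~2, let $\eta\in Y$ be periodic with $\eta\ne\xi$. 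Because $u$ and $v$ are magic and $C$ is entered only from $q$ (right after reading $u$) and left only into $v$, the portion of any $\hat\pi$-preimage of $\eta$ lying strictly between an occurrence of $u$ and the next $v$ is forced to run along $C$, so the preimages of $\eta$ are governed exactly as in the (modified) Fischer cover away from the $C$-region. Hence the uniqueness in (a) is inherited from the uniqueness of Fischer-cover preimages — which holds in particular whenever $\eta$ is synchronizing, by Proposition~\ref{synchro_infinitelyoften} — and for (b), if $\eta$ is synchronizing then $\tilde\eta$ is automatically a synchronizing point of $\hat Y$, again because $\hat Y$ is an SFT.
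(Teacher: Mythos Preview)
Your approach has a genuine gap in item~2(a). You build $\hat Y$ as the edge shift $X_{\hat G}$ of a graph obtained by surgery on the right Fischer cover $G$ of $Y$, with $\hat\pi = \hat{\mathcal L}_\infty$. But for a \emph{non-synchronizing} periodic point $\eta \in Y$ with $\eta \neq \xi$ --- and such points exist whenever $Y$ is strictly sofic --- the preimage $\mathcal L_\infty^{-1}(\eta)$ in the Fischer cover $X_G$ can already have more than one element (Proposition~\ref{synchro_infinitelyoften} guarantees uniqueness only when synchronizing words appear infinitely often to the left in $\eta$). Your surgery touches only the ``$u$--$v$ context'' of $\xi$; away from that region $\hat G$ agrees with $G$ up to state-splitting, so $\hat\pi^{-1}(\eta)$ inherits the multiplicity from the Fischer cover. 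Concretely: take $Y$ to be the even shift, $\xi = (100)^\infty$ (receptive, since $1$ is synchronizing), and $\eta = 0^\infty$; then $\eta$ has two preimages in the Fischer cover of $Y$, and nothing in your $\hat G$ collapses them. Your sentence ``the uniqueness in (a) is inherited from the uniqueness of Fischer-cover preimages --- which holds in particular whenever $\eta$ is synchronizing'' is correct as far as it goes, but the proposition demands uniqueness for \emph{all} periodic $\eta \neq \xi$, and a Fischer-cover-based cover cannot deliver this.

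The paper avoids this by a completely different construction that does not pass through a graph presentation. It builds $\hat Y$ directly as a sofic shift over an \emph{enlarged alphabet} $\mathcal A \sqcup \{\hat w_0,\ldots,\hat w_{n-1}\}$: first form $Y' \subseteq Y$ by forbidding all words $u w^k v$ (here $w = \xi_{[0,n-1]}$), then let $\hat Y$ be the closure of all bi-infinite concatenations alternating between $Y'$-words that begin with $v$ and end with $u$, and powers of the new word $\hat w = \hat w_0\cdots\hat w_{n-1}$. The map $\hat\pi$ is the $1$-block code $\hat w_i \mapsto w_i$, identity on $\mathcal A$. The crucial feature is that the \emph{only} ambiguity in $\hat\pi$ is ``hat or no hat,'' and that is resolved by the $u,v$ framing: any $y \in Y$ not asymptotic to $\xi$ decomposes uniquely into maximal $u w^k v$-blocks (which must lift to $u\hat w^k v$) and the complementary $Y'$-stretches (which lift to themselves). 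This gives 2(a) for every periodic $\eta \neq \xi$, synchronizing or not. Note also that the paper's $\hat Y$ is genuinely sofic and typically not an SFT, so your shortcut for 1 and 2(b) via ``$\hat Y$ is an SFT, hence every long word is synchronizing'' is unavailable; the paper instead verifies directly that $\hat w$, $u$ and $v$ are synchronizing in $\hat Y$. (A secondary issue: the surgery you describe is only sketched, and you yourself flag the ``delicate point'' of securing (i) and (ii) simultaneously without actually carrying it out --- but even granting that, 2(a) would still fail as above.)
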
	
\noindent{(Remark 1: for item 2(a), we will indeed prove something stronger: any point $y\in Y$ that is not left/right asymptotic to $\xi$ has a unique $\hat{\pi}$-preimage.)}


\begin{proof}

Denote $w:=\xi_{[0,n-1]}$, so that $\xi=w^\infty$. 
Let $u$ and $v$ be synchronizing words such that $uw^k v\in \mathcal{B}(Y)$ for all $k\geq 1$. By extending $u$ (to the left) and $v$ (to the right) if needed, we may assume $\vert u \vert = \vert v \vert>n$, and the prefix of $u$ of length $n$ and the suffix of $v$ of length $n$ is not in $\{\sigma^j(\xi)_{[0,n-1]}: 0\leq j\leq n-1\}$.

Let $Y'$ denote the subshift of $Y$ obtained by forbidding the blocks $uw^kv$ for all $k\geq 1$. We claim  that $Y'$ is sofic.
To see this, first observe that we can write $Y' = Y \cap Z$, where $Z$ is the shift space obtained by forbidding, from the full shift,
words of the form  $uw^kv$.  Now  $Z$ is sofic  because the set   $\{uw^kv: k\in \mathbb{N}\setminus \{0\}\}$ is a regular language, i.e., the set of all outputs of a finite directed labelled graph which all start in the same initial state and end in the same terminal state.
Then $Y'$ is sofic because the intersection of sofic shifts is sofic {\cite[Proposition 3.4.10]{LM}}.

Let $\mathcal{A}$ be the alphabet of $Y$ and let $\{\hat{w}_0, \cdots, \hat{w}_{n-1}\}$ be a set of distinct elements that is disjoint from $\mathcal{A}$. {Let $\hat{w}:=\hat{w}_0\hat{w}_1\cdots \hat{w}_{n-1}$ and $\hat{\xi}:=\hat{w}^\infty.$}

Let $Y''$ be the set of bi-infinite sequences of the form
\begin{equation} \label{points_in_Y''}
	\ldots \alpha_i \hat{w}^{k_i} \alpha_{i+1} \hat{w}^{k_{i+1}}\ldots
\end{equation}	
where each $k_i\geq 1$ and each $\alpha_i$ is a block allowed in $Y'$ and begins with $v$ and ends with $u$.
In other words, to obtain $Y''$ from $Y'$,  one adds stretches of full periods of $\hat{w}$  that can escape into stretches of $Y'$ only through $u$ and $v$.

Let $\hat{Y}  = \overline{Y''}$ {be the closure of $Y''$ in $\mathcal B^\mathbb Z$, where $\mathcal B =\mathcal A \cup \{\hat{w}_0, \cdots, \hat{w}_{n-1}\}$}. So $\hat{Y}$ augments $Y''$ by adding in points that begin and/or end with semi-infinite sequences
$\hat{w}^\infty$ or semi-infinite sequences allowed in $Y'$. We claim $\hat{Y}$ is sofic. To see this,
{let $\mathcal{G}=(G, \mathcal{L})$ be a vertex labeled graph representing $Y'$. Let $\mathcal{H}$ be the vertex labeled graph defined by the following:
\begin{itemize}
    \item The vertex set of $\mathcal{H}$ is the set of vertices of $G$ with the same labels and $n$ new vertices labeled by $\{\hat{w}_0, \cdots, \hat{w}_{n-1}\}$;
    \item For any pair $(I,J)$ of vertices from $\mathcal{H}$, $IJ$ is an edge in $\mathcal H$ if and only if either $IJ$ is an edge in $G$, or at least one of $I$ and $J$ is one of the $n$ new vertices defined above.
\end{itemize}
Let $Y^*$ be the sofic shift represented by $\mathcal{H}$.
Let $W$ be the SFT over the alphabet $\mathcal{B}$ given by the following rules:
for any point $x\in W$, if $x_i=\hat{w}_j$, then
\begin{enumerate}
    \item if $1 \leq j\leq n-2$, we have $x_{i+1}=\hat w_{j+1}$;
    \item if $j=n-1$, then either $x_{i+1}=\hat w_0$ or $x_{i+1}\cdots x_{i+\vert v \vert}=v$;
    \item if $j=0$, then either $x_{i-1}=\hat w_{n-1}$ or $x_{i-\vert u \vert} \cdots x_{i-1}=u$.
\end{enumerate}
It is not hard to see that $\hat{Y}=Y^*\cap W$. Therefore, $\hat{Y}$ is sofic.}

We then claim that $u$ and $v$ are synchronizing in $\hat{Y}$.
{We first prove $u$ is synchronizing.}
Since $Y'$ is a subshift of $Y$, $u$ is synchronizing in $Y'$.  It is enough to show that $u$ is synchronizing in $Y''$ in the sense that if $xu$ and $uy$ are words that occur in  elements of $Y''$, then so does $xuy$. To this end, by extending $x$ to the left if necessary, we may write $x = ab$, where $bu$ is a word in $Y'$ that begins with $v$ and $a$ is an alternating concatenation of words in $Y'$, each beginning with $v$ and ending with $u$, and powers of $\hat{w}$, ending with the latter.
Similarly, we may write $y = cd$, where $uc$ is a word in $Y'$ that ends with $u$ ($c$ is possibly empty) and $d$ is an alternating concatenation of words in $Y'$, each
beginning with $v$ and ending with $u$, and powers of $\hat{w}$, beginning with the latter.
Since $u$ is synchronizing in $Y'$, $buc$ is allowed in $Y'$, begins with $v$ and ends with $u$.  But then $xuy= abucd$ is of the form of a finite  word in some element of $Y''$ and  thus is allowed in $Y''$ and hence in $\hat{Y}$. {Thus, $u$ is a synchronizing word in $\hat{Y}$. The proof that $v$ is synchronizing in $\hat{Y}$ is similar.}

Let $\hat{\pi}: \hat{Y}\to Y$ be a sliding block code induced by the $1$-block map $\Pi: \hat{\mathcal{A}} \to \mathcal{A}$
defined by 
$$
\Pi(a) =
\begin{cases}
a \qquad \mbox{if $a\notin \{\hat{w}_0,\cdots, \hat{w}_{n-1}\}$} \\
w_i \qquad \mbox{if $a=\hat{w}_i$, where $0\leq i\leq n-1$}.
\end{cases}
$$	


We first show $\hat{\pi}$ maps $\hat{Y}$ into $Y$. Since $Y$ is closed and $\hat{\pi}$ is continuous, it suffices to show $\hat{\pi}$ maps $Y''$ into $Y$. 
Take any point $y \in Y''$. Note that $y$ is of the form (\ref{points_in_Y''}) where each $\alpha_i$ is in $\mathcal{B}(Y')$ and begins with $v$ and ends with $u$. By definition, $\hat{\pi}(y)$ is
\begin{equation} \label{points_in_Y_0}
\ldots \alpha_i w^{k_i} \alpha_{i+1} w^{k_{i+1}} \ldots.
\end{equation}
Since for each $i$, $\alpha_i$ and  $uw^{k_i}v$ are allowed in $Y$, we have $\hat{\pi}(y)\in Y$ because $u$ and $v$ are synchronizing in $Y$.



We then show $\hat\pi$ maps $\hat{Y}$ onto $Y$. Let $Y_0\subset Y$ be the set of bi-infinite sequences of the form (\ref{points_in_Y_0}), where each $\alpha_i$ is allowed in $Y'$ starting with $v$ and ending with $u$.  Since $\hat{Y}$ is compact, $Y_0$ is dense in $Y$ and $\hat{\pi}$ is continuous, it suffices to show $\hat{\pi}$ maps $Y''$ onto $Y_0$. But this follows immediately from the definition of $Y''$ and $\hat{\pi}$.

We claim that $\hat{Y}$ is irreducible. Since $Y''$ is dense in $\hat{Y}$ \footnote{{For a {shift invariant} subset $X'$ of a shift space $X$, we adapt the concepts of language, irreducible and mixing to $X'$ (defined in the same ways) and observe that if $X'$ is dense in $X$, then ${\mathcal B}(X') = {\mathcal B}(X)$, and $X'$ is irreducible (resp. mixing) iff $X$ is irreducible (resp. mixing).}}, it suffices to show $Y''$ is irreducible. 
To see this, let $x$ and $y$ be two arbitrary allowed words in $Y''$. By the definition of $Y''$, there exist words $s$ and $t$ such that $xsu$ and $uty$ are allowed in $Y''$. Since $u$ is synchronizing for $Y''$, we have $xsuty$ is allowed in $Y''$. This shows $Y''$ (and therefore $\hat{Y}$) is irreducible.  

We then prove part 1 of Proposition \ref{AIcover_sofic}. Recall that $\xi=w^\infty$ and $\hat{\xi}=\hat{w}^\infty$. 
It is clear from the construction of $\hat{Y}$ that $\hat{w}^\infty\in \hat{Y}$ and $\per(\hat w^\infty)=\per(w^\infty)$. Moreover, we have $\hat{\pi}(\hat w^\infty)=w^\infty$. To show $\hat w^\infty$ is a synchronizing periodic point in $\hat{Y}$, it suffices to show $\hat{w}$ is a synchronizing word of $Y''$, in the sense that if $x \hat w$ and $\hat wy$ are allowed words in $Y''$, then so is $x \hat wy$. By extending $x$ to the left if necessary, we may write $x:=au\hat w^{\ell_1}$ where $\ell_1\geq 0$ and $a$ is an allowed word in $Y''$. Similarly, we may write $y:=\hat w^{\ell_2}vb$ where $\ell_2\geq 0$ and $b$ is an allowed word in $Y''$. Note that $u\hat{w}^{\ell_1+\ell_2+1} v$ is allowed in $Y''$. Since $u$ and $v$ are both synchronizing in $Y''$, $au\hat{w}^{\ell_1+\ell_2+1} vb$ must be allowed in $Y''$, i.e., $x\hat{w}y$ is allowed in $Y''$. Thus, $\hat{w}$ is a synchronizing word and therefore $\hat{w}^\infty$ is a synchronizing periodic point in $Y''$.

To show $\hat{\pi}$ is almost invertible and item 2(a) of the proposition, we 
first claim that any $y\in Y$ that is not left/right asymptotic to $w^\infty$ must have a unique preimage. To this end, take such a $y$. Recalling that the prefix of $u$ of length $n$ and the suffix of $v$ of length $n$ is not in $\{\sigma^j(w^\infty)_{[0,n-1]}: 0\leq j\leq n-1\}$, we know that $y$ can be uniquely written as 
$$
y= \ldots \alpha_i {w}^{k_i} \alpha_{i+1} {w}^{k_{i+1}} \ldots
$$
where for each $i$, $1\leq k_i<\infty$ and  $\alpha_i$ is an allowed word in $Y'$ that begins with $v$ and ends with $u$, {except that $y$ may begin and/or end with semi-infinite sequences allowed in $Y'$}. In particular, each $\alpha_i$ does not contain $uw^kv$ for any $k\geq 1$. Thus, the only $\hat{\Pi}$-preimage of each $\alpha_i$ is $\alpha_i$ itself, and the only $\hat{\Pi}$-preimage of each $w^k$ is $\hat w^k$ because it is preceded by a $u$ and followed by a $v$. This proves that  $y$ has a unique $\hat{\pi}$-preimage. 


Item 2(a) of the proposition directly follows from the claim above.
Moreover, since any doubly transitive point in $Y$ is not left/right asymptotic to $w^\infty$, it must {have} a unique preimage and therefore $\hat{\pi}: \hat{Y}\to Y$ is almost invertible. 

It remains to show 2(b). {Let $\eta$ and $\tilde\eta$ be as in the statement of item 2. Note that $\eta$ and $\tilde{\eta}$ must have the same least period. We use $m$ to denote this least period. We also denote $r:=\eta_{[0,m-1]}$ and $\tilde{r}:=\tilde{\eta}_{[0,m-1]}$, so that $\eta=r^\infty$ and $\tilde{\eta}=\tilde{r}^\infty$.
Since $r^\infty$ is a synchronizing periodic point,
there is a $k\in \mathbb{N}$ such that $r^k$ is a synchronizing word in $Y$.} 
Now consider two cases:

If for some $0\leq i\leq n-1$, $\hat{w}_i$ appears in $\tilde{r}^k$, then $\hat{w}$ must appear in $\tilde{r}^k$ and therefore $\tilde{r}^k$ must be synchronizing in $\hat{Y}$ because it is an extension of the synchronizing word $\hat{w}$.

If $\hat{w}_i$ does not appear in $\tilde{r}^k$ for any $0\leq i\leq n-1$, then $\tilde{r}^k=r^k$ and consequently $\tilde{r}^k$ is synchronizing in $Y$ because ${r}^k$ is. We now show $\tilde{r}^k$ is synchronizing in $\hat{Y}$, {which is equivalent to showing that ${r}^k$ is synchronizing in $Y''$. We may assume that both $u$ and $v$ do not appear in $r^k$, because otherwise $r^k$ will be automatically synchronizing in $Y''$.} Let $x{r}^k$ and ${r}^ky$ be words that occur in elements of $Y''$. By extending $x$ to the left if necessary, we may write $x{r}^k=aub$, where $ub$ is an allowed word in $Y'$ containing ${r}^k$ as a suffix, and $a$ is an allowed word in $Y''$. Similarly, we may write ${r}^ky=cvd$ where $cv$ is an allowed word in $Y'$ containing ${r}^k$as a prefix, and $d$ is an allowed word in $Y''$. Now, using the synchronization of ${r}^k$ in $Y'$, we can ``glue" words $ub$ and $cv$ to form an allowed word in $Y''$. Call this resulting word {{$t$, which has $u$ as a prefix and $v$ as a suffix. Finally, since $au, t, vd$ are allowed in $Y''$ and $u, v$ are synchronizing in $Y''$, we conclude that $atd$}} is allowed in $Y''$, but $x{r}^ky=atd$. Thus, ${r}^k$ is synchronizing in $Y''$ {and the proof is complete}. 
\end{proof}					 	




As a Corollary, we get the following characterizations of receptivity in terms of SFT covers. 

\begin{cor}\label{equal_entropy} Let $Y$ be an irreducible sofic shift and $y \in Y$ a periodic point.  The following are equivalent.
\begin{enumerate}
\item 
$y$ is receptive. 
\item 
There is a cover $\pi:X\to Y$
such that $\pi^{-1}(y)$ contains a periodic point with the same least period as $y$.
\item There is an almost invertible cover $\pi:X \to Y$ 
such that $\pi^{-1}(y)$ contains a periodic point with the same least period as $y$.
\end{enumerate}
\end{cor}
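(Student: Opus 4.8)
The plan is to prove the cycle of implications $(3)\Rightarrow(2)\Rightarrow(1)\Rightarrow(3)$, with $(1)\Rightarrow(3)$ being the only implication requiring real work. The implication $(3)\Rightarrow(2)$ is immediate, since an almost invertible cover is in particular a cover. The equivalence $(1)\Leftrightarrow(2)$ is already available: the ``if'' direction of Theorem~\ref{16-03-23a} gives $(1)\Rightarrow(2)$, and its ``only if'' direction gives $(2)\Rightarrow(1)$ (a periodic preimage of the same least period forces receptivity). So once $(1)\Rightarrow(3)$ is in hand, the loop closes. Before doing so I would dispose of the degenerate case $h(Y)=0$: then the right Fischer cover of $Y$ is an irreducible SFT of zero entropy, hence finite, hence a single periodic orbit, so $Y$ itself is a single periodic orbit and in particular an SFT; taking $\pi=\id_Y$ (an almost invertible cover) makes $(3)$ hold trivially (and $(1)$ holds since all periodic points of an SFT are receptive, by Proposition~\ref{receptive_other_characterization}(3)). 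So from now on assume $h(Y)>0$.

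For $(1)\Rightarrow(3)$ with $h(Y)>0$: let $y$ be receptive with least period $n$. Apply Proposition~\ref{AIcover_sofic} with $\xi=y$ to obtain an irreducible sofic shift $\hat Y$, an almost invertible factor code $\hat\pi:\hat Y\to Y$, and a synchronizing periodic point $\hat\xi\in\hat Y$ with $\hat\pi(\hat\xi)=y$ and $\per(\hat\xi)=n$. Let $\rho:X\to\hat Y$ be the right Fischer cover of $\hat Y$, so that $X$ is an irreducible SFT and $\rho$ is an almost invertible factor code, and set $\pi:=\hat\pi\circ\rho:X\to Y$. Then $\pi$ is a factor code from an irreducible SFT onto $Y$, i.e.\ a cover of $Y$. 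It is almost invertible because a composition of almost invertible factor codes between irreducible sofic shifts is again almost invertible (finite-to-oneness is preserved under composition and the degree is multiplicative under composition, since preimages of doubly transitive points under a finite-to-one factor code of irreducible sofic shifts are doubly transitive; see \cite{LM}).

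It remains to produce a suitable periodic point in $\pi^{-1}(y)$. Since $\hat\xi$ is a synchronizing periodic point it contains a synchronizing word, and by periodicity that word recurs infinitely often to the left in $\hat\xi$; hence Proposition~\ref{synchro_infinitelyoften} applies and $\hat\xi$ has a unique $\rho$-preimage $x\in X$. Because $\sigma^n\hat\xi=\hat\xi$ and $\rho$ is shift-equivariant, $\sigma^n x$ is also a $\rho$-preimage of $\hat\xi$, so uniqueness forces $\sigma^n x=x$; moreover no proper divisor $m$ of $n$ can satisfy $\sigma^m x=x$, since then $\sigma^m\hat\xi=\rho(\sigma^m x)=\hat\xi$ would contradict $\per(\hat\xi)=n$. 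Thus $x$ is periodic with $\per(x)=n$, and $\pi(x)=\hat\pi(\rho(x))=\hat\pi(\hat\xi)=y$, so $x\in\pi^{-1}(y)$ is periodic of the same least period as $y$. This establishes $(3)$.

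The main obstacle is the bookkeeping around almost invertibility: one must be confident both that the composite $\hat\pi\circ\rho$ inherits almost invertibility (this is the one place I lean on a standard fact about finite-to-one codes of irreducible sofic shifts rather than on something proved in the excerpt) and that the unique $\rho$-preimage of $\hat\xi$ has least period exactly $n$ and not a proper divisor --- the latter being handled cleanly by the uniqueness-plus-equivariance argument above. Everything else is a routine assembly of Proposition~\ref{AIcover_sofic}, Proposition~\ref{synchro_infinitelyoften}, and Theorem~\ref{16-03-23a}.
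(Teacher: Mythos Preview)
Your proof is correct and follows essentially the same route as the paper's: apply Proposition~\ref{AIcover_sofic}, compose with the right Fischer cover of $\hat Y$, and verify almost invertibility of the composite via finite-to-oneness and preservation of double transitivity. Your explicit handling of the degenerate case $h(Y)=0$ (which Proposition~\ref{AIcover_sofic} excludes) and your careful verification that the unique $\rho$-preimage has least period exactly $n$ are welcome additions that the paper leaves implicit.
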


\begin{proof}
$(1)\Rightarrow (3)$: Let $\hat{\pi}: \hat{Y} \to Y$ be the factor code constructed in Proposition~\ref{AIcover_sofic} for a given periodic point $y$. Then 
$\hat{\pi}^{-1}(y)$ contains a synchronizing periodic point $\hat{y} \in \hat{Y}$ with the same least period as $y$.  Let $\gamma:X \to \hat{Y}$ be the (almost invertible) right Fischer covering map. {By Proposition~\ref{synchro_infinitelyoften}, $|\gamma^{-1}(\hat{y})|= 1$.} It follows that the unique preimage of $\hat{y}$ has the same least period as $y$.

{Let $\pi = \hat{\pi} \circ \gamma$. It is clear from the previous paragraph that $\pi^{-1}(y)$ contains a periodic point with the same least period as $y$. We now claim that $\pi$ is an almost invertible factor code, i.e., every doubly transitive point has exactly one preimage under $\pi$. To see this, first recall from Proposition \ref{AIcover_sofic} that $\hat{\pi}$ is almost invertible, and thus every doubly transitive point in $Y$ has a unique $\hat{\pi}$-preimage. Then, by our construction in Proposition \ref{AIcover_sofic}, $\hat{\pi}$ is finite-to-one (in fact, as the reader can check, $\hat{\pi}$ is at most $4$-to-one. Indeed, while we don't need this, one can show that evevy almost invertbile factor code on an irreducible sofic shift is finite-to-one). By \cite[Lemma 9.1.13]{LM}, the unique $\hat{\pi}$-preimage of every doubly transitive point in $Y$ is also doubly transitive in $\hat{Y}$. Since $\gamma$ is almost invertible, every doubly transitive point in $\hat{Y}$ has a unique $\gamma$-preimage in $X$. Thus, every doubly transitive point in $Y$ has a unique preimage via $\pi=\hat{\pi}\circ \gamma$. That is, $\pi$ is almost invertible.} 

$(3)\Rightarrow (2)$ is trivial, and $(1) \Leftrightarrow (2)$ is the content of Theorem \ref{16-03-23a}. 
\end{proof}

Using Proposition \ref{AIcover_sofic}, we prove the following result which says  in particular  that {existence of an}
$S$-factorizable embedding and {existence of an} {\em $AI$-factorizable embedding} (i.e., a factorizable embedding in the sense of Definition \ref{factorizable_def} such that $\pi$ is almost invertible) are equivalent.

\begin{thm}
Let $Y$ be an irreducible sofic shift. Let $Z$ be a subshift with $h(Z)<h(Y)$. Set $p:= \p(Y)$. Then the following are equivalent:
\begin{enumerate}
\item[(1)] $Z$ is $p$-periodic and $q_{np}(Z)\leq \rec_{np}(Y)$ for all $n$.
\item[(2)] There is an $S$-factorizable embedding of $Z$ into $Y$.
\item[(3)] There is an $AI$-factorizable embedding of $Z$ into $Y$.
\end{enumerate}	
\end{thm}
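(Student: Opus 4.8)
The plan is to get $(1)\Leftrightarrow(2)$ and $(3)\Rightarrow(2)$ essentially for free and to concentrate the work on $(1)\Rightarrow(3)$. Since $h(Z)<h(Y)$, the equivalence $(1)\Leftrightarrow(2)$ is exactly Lemma~\ref{17-06-24b}; and $(3)\Rightarrow(2)$ is immediate, since an almost invertible factor code out of an irreducible SFT is in particular a cover, so an $AI$-factorizable embedding is $S$-factorizable. Note also that $h(Z)<h(Y)$ together with $h(Z)\ge 0$ forces $h(Y)>0$, which is what is needed to invoke Proposition~\ref{AIcover_sofic}. Thus everything reduces to building, from the hypotheses in (1), an almost invertible cover $\pi\colon X\to Y$ by an irreducible SFT $X$ of global period $p$ with $q_{np}(Z)\le r_{np}(\pi)$ for all $n$, and then applying Theorem~\ref{Sophie2}.

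To construct $\pi$, first use Proposition~\ref{growth}(d), which gives $\frac1{np}\log s_{np}(Y)\to h(Y)$, together with the standard bound $\limsup_n\frac1{np}\log q_{np}(Z)\le h(Z)<h(Y)$, to fix $N\in\mathbb N$ with $q_{np}(Z)\le s_{np}(Y)$ for all $n>N$. Since $p=\p(Y)$ is the $\gcd$ of the least periods of receptive periodic points of $Y$, every such least period is a multiple of $p$, so the set of receptive periodic points of least period $\le Np$ is finite; enumerate it as $y_1,\dots,y_K$. Now iterate Proposition~\ref{AIcover_sofic}: set $\hat Y^{(0)}=Y$, and at stage $i$ apply it to the irreducible sofic shift $\hat Y^{(i-1)}$ (which has positive entropy, being a finite-to-one image above a cover of $Y$) at the periodic point $\xi^{(i)}$ obtained as the preimage of $y_i$ in $\hat Y^{(i-1)}$, producing an almost invertible, finite-to-one factor code $\hat\pi^{(i)}\colon\hat Y^{(i)}\to\hat Y^{(i-1)}$. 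Since $y_i\ne y_j$ for $j<i$, part (2a) of Proposition~\ref{AIcover_sofic} guarantees inductively that $y_i$ had a unique preimage through all earlier stages, so $\xi^{(i)}$ has the same least period as $y_i$; and --- this is the key technical point, addressed below --- $\xi^{(i)}$ is receptive in $\hat Y^{(i-1)}$, so Proposition~\ref{AIcover_sofic} applies. After stage $i$ the point $y_i$ acquires a \emph{synchronizing} preimage of least period $\per(y_i)$ (part 1), and by repeated use of part (2b) this preimage is still synchronizing in $\hat Y^{(K)}$; likewise every synchronizing periodic point of $Y$ keeps a synchronizing preimage all the way up to $\hat Y^{(K)}$.

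Let $q\colon\hat Y^{(K)}\to Y$ be the composition of the $\hat\pi^{(i)}$; it is an almost invertible, finite-to-one factor code, since composing such codes preserves these properties by \cite[Lemma 9.1.13]{LM}. Let $\gamma\colon X\to\hat Y^{(K)}$ be the right Fischer cover of $\hat Y^{(K)}$ and set $\pi:=q\circ\gamma\colon X\to Y$. Then $\pi$ is an almost invertible factor code from the irreducible SFT $X$ onto $Y$, so $\p(X)=p$ by Proposition~\ref{25-03-23-b} ($p=p_2$). Moreover $\pi$ realizes the periodic points we need: if $\tilde\eta$ is a synchronizing periodic point of $\hat Y^{(K)}$ then synchronizing words occur infinitely often to its left, so by Proposition~\ref{synchro_infinitelyoften} $\tilde\eta$ has a unique $\gamma$-preimage, which therefore has the same least period. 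Applying this with $\tilde\eta$ the (synchronizing) preimage in $\hat Y^{(K)}$ of some $y_i$ of least period $np$, $n\le N$, shows $r_{np}(\pi)\ge \rec_{np}(Y)\ge q_{np}(Z)$, using (1); applying it with $\tilde\eta$ the (synchronizing) preimage of a synchronizing periodic point of least period $np$, $n>N$, gives $r_{np}(\pi)\ge s_{np}(Y)\ge q_{np}(Z)$. Hence $q_{np}(Z)\le r_{np}(\pi)$ for every $n$. As $Z$ is $p$-periodic and $h(Z)<h(Y)$, Theorem~\ref{Sophie2} yields an embedding $\psi\colon Z\to X$ with $\pi\circ\psi\colon Z\to Y$ an embedding; since $\pi$ is almost invertible and $X$ an irreducible SFT, $\pi\circ\psi$ is $AI$-factorizable, establishing (3).

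The main obstacle is the italicized claim: if $\hat\pi\colon\hat Y\to Y$ is the cover produced by Proposition~\ref{AIcover_sofic} for a receptive periodic point $\xi$, and $\eta\ne\xi$ is a receptive periodic point of $Y$, then its unique $\hat\pi$-preimage $\tilde\eta$ is receptive in $\hat Y$. I would prove this as follows. Write $\eta=r^\infty$ with $|r|=\per(\eta)$, choose synchronizing words $a,b$ of $Y$ with $ar^kb\in\mathcal B(Y)$ for all $k$, and use irreducibility to form, for one fixed large $k_0$, a synchronizing periodic point $\zeta=(a r^{k_0} b c)^\infty$ of $Y$; by part (2b), its preimage $\tilde\zeta$ is synchronizing in $\hat Y$. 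Because $\eta$ has a unique $\hat\pi$-preimage, the $\hat\pi$-preimage of a long block $r^{k_0}$ agrees, away from a bounded number of end coordinates, with a power of a cyclic shift of $\tilde r:=\tilde\eta_{[0,m-1]}$; hence $\tilde\zeta$ contains a long such power sandwiched between two occurrences of a synchronizing word of $\hat Y$. Reading these occurrences off, and using that $\tilde\eta=\tilde r^\infty\in\hat Y$ (so $\tilde r^k$ is always allowed and the right-resolving Fischer cover of $\hat Y$ eventually settles onto the cycle of $\tilde\eta$), one extracts synchronizing words $\hat a,\hat b$ with $\hat a\,\tilde r^{k}\,\hat b\in\mathcal B(\hat Y)$ for all sufficiently large $k$; by Proposition~\ref{receptive_other_characterization}(1) this shows $\tilde\eta$ is receptive. (An alternative to iterating --- producing one cover $\hat\pi_i\colon\hat Y_i\to Y$ per $y_i$ directly from Proposition~\ref{AIcover_sofic} and taking the fibre product over $Y$ --- would sidestep this lemma but instead require verifying that a suitable irreducible component of the fibre product still surjects onto $Y$ and carries the required periodic preimages; the iteration route seems cleaner.)
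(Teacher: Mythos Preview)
Your proposal is correct and follows essentially the same approach as the paper: both reduce to $(1)\Rightarrow(3)$, use Proposition~\ref{growth}(d) to find $N$ beyond which $q_{np}(Z)\le s_{np}(Y)$, iterate Proposition~\ref{AIcover_sofic} over the finitely many receptive periodic points of least period $\le Np$, cap with the right Fischer cover, and apply Theorem~\ref{Sophie2}. You are actually more careful than the paper on one point: the paper simply writes ``Applying Proposition~\ref{AIcover_sofic} at most $M$ times'' without checking that the lifted point at each stage is still receptive, whereas you flag this and sketch a verification.
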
	
\begin{proof}
(3)$\Rightarrow$ (2) is trivial.

The equivalence between (1) and (2) is just Lemma \ref{17-06-24b}. 

It remains to show (1) $\Rightarrow$ (3).  To prove this, we will construct an almost invertible cover $\pi: X\to Y$
such that $q_{np}(Z)\leq r_{np}(\pi)$ for all $n$, and then apply Theorem \ref{Sophie2}.

First recall from Proposition~\ref{growth} that 
$$
\lim_{n\to\infty} \frac{1}{np} \log s_{np}(Y)=h(Y).
$$
Since $h(Z)<h(Y)$ and $\limsup_{n\to \infty}\frac{1}{n} \log q_{n}(Z) \le h(Z)$, there exists a positive integer $N$ such that 
\begin{equation} \label{nlarge}
	q_{np}(Z)\leq s_{np}(Y) \qquad  \mbox{for all $n\geq N$}. 
\end{equation}
Let $\{\xi^{(1)}, \xi^{(2)},\cdots, \xi^{(M)}\}$ be the set of receptive periodic points of least period $ < Np$ in $Y$, where $M$ is the total number of such points.
 Applying Proposition \ref{AIcover_sofic} at most $M$ times, with $\xi$ replaced by $\xi^{(i)}$ at each step, we obtain an irreducible  sofic shift $\hat{Y}$, an almost invertible factor code $\hat{\pi}: \hat{Y}\to Y$, and a set of synchronizing periodic points $\{\hat{\xi}^{(1)}, \hat{\xi}^{(2)},\cdots, \hat{\xi}^{(M)}\}$ such that for each $1\leq i\leq M$, $\hat{\pi}(\hat{\xi}^{(i)})=\xi^{(i)}$ and $\per(\hat{\xi}^{(i)})=\per(\xi^{(i)})$. 
 
 
 

  Let $\gamma: X\to \hat{Y}$ be the right Fischer cover of $\hat{Y}$ and define $\pi:= \hat{\pi}\circ \gamma$. Then $\pi$ is an almost invertible cover of $Y$ since both $\gamma$ and $\hat{\pi}$ are almost invertible. 


 For all $n\geq N$, by item 2 of Proposition \ref{AIcover_sofic}, any $\eta\in S_{np}(Y)$ has a unique synchronizing $\hat{\pi}$-preimage (call it $\hat{\eta}$) of least period $np$ in $\hat{Y}$; 
and {by Proposition \ref{synchro_infinitelyoften}, $\hat{\eta}$ has a unique $\gamma$-preimage}, which has the same least period in $X$. Thus, $s_{np}(Y)\leq r_{np}(\pi)$,
which, combined with (\ref{nlarge}), gives $q_{np}(Z)\leq r_{np}(\pi)$ for all $n\geq N$.
 
 To show $q_{np}(Z)\leq r_{np}(\pi)$ for all $n< N$, we consider $\{\xi^{(1)}, \cdots, \xi^{(M)}\}$. Recall that for each $1\leq i\leq M$, $\xi^{(i)}$ has a $\hat{\pi}$-preimage $\hat{\xi}^{(i)}$ with the same least period. Since $\hat{\xi}^{(i)}$ is a synchronizing periodic point, it has a unique $\gamma$-preimage, which must have the same least period as $\hat{\xi}^{(i)}$ (and therefore the same least period as $\xi^{(i)}$). Thus, $\rec_{np}(Y)\leq r_{np}(\pi)$ for all $n<N$ and therefore $q_{np}(Z)\leq r_{np}(\pi)$ for $n<N$. 

Finally, since $Z$ is $p$-periodic, $h(Z)<h(Y)$ and $q_{np}(Z)\leq r_{np}(\pi)$ for all $n$, by Theorem \ref{Sophie2}, there is a factorizable embedding of $Z$ into $Y$ which factors through $\pi$, an almost invertible factor code, as desired. 
\end{proof}

{\em Acknowledgement:} We are happy to acknowledge that Sophie MacDonald's theorem, the main result of ~\cite{M}, was the main inspiration for our work.

\end{document}